\newtheorem{theorem}{Theorem}[subsection]
\newtheorem{lemma}[theorem]{Lemma}
\newtheorem{cor}[theorem]{Corollary}
\theoremstyle{definition}
\newtheorem{defn}[theorem]{Definition}
\newtheorem{example}[theorem]{Example}
\newtheorem{exercise}[theorem]{Exercise}
\newtheorem{hypothesis}[theorem]{Hypothesis}
\newtheorem{convention}[theorem]{Convention}
\newtheorem{remark}[theorem]{Remark}
\numberwithin{equation}{theorem}
\newcommand{\bA}{\mathbf{A}}
\newcommand{\be}{\mathbf{e}}
\newcommand{\Fp}{\mathbb{F}_p}
\newcommand{\Qp}{\mathbb{Q}_p}
\newcommand{\QQ}{\mathbb{Q}}
\newcommand{\Zp}{\mathbb{Z}_p}
\newcommand{\ZZ}{\mathbb{Z}}
\newcommand{\gothm}{\mathfrak{m}}
\newcommand{\gotho}{\mathfrak{o}}
\DeclareMathOperator{\FEt}{\mathbf{F\acute{E}t}}
\DeclareMathOperator{\Gal}{Gal}
\DeclareMathOperator{\GL}{GL}
\DeclareMathOperator{\Hom}{Hom}
\DeclareMathOperator{\Spec}{Spec}
\DeclareMathOperator{\Trace}{Trace}
\begin{document}

\title{New methods for $(\varphi, \Gamma)$-modules}
\author{Kiran S. Kedlaya}
\thanks{Supported by NSF CAREER grant DMS-0545904,
DARPA grant HR0011-09-1-0048, MIT (NEC Fund), UC San Diego
(Warschawski Professorship). Thanks to Ruochuan Liu, Ryan Rodriguez,
and Sarah Zerbes for helpful feedback.}
\date{January 14, 2015}

\begin{abstract}
We provide new proofs of two key results of $p$-adic Hodge theory: the Fontaine-Wintenberger isomorphism between Galois groups in characteristic 0 and characteristic $p$, and the Cherbonnier-Colmez theorem on decompletion of $(\varphi, \Gamma)$-modules. These proofs are derived from joint work with Liu on relative $p$-adic Hodge theory, and are closely related to the theory of perfectoid algebras and spaces, as in the work of Scholze.
\end{abstract}

\dedicatory{To Robert, forever quelling the rebellious provinces}

\maketitle

Let $p$ be a prime number. 
The subject of \emph{$p$-adic Hodge theory} concerns the interplay between different objects arising from
the cohomology of algebraic varieties over $p$-adic fields.
A good introduction to the subject circa 2010 
can be found in the notes of Brinon and Conrad
\cite{brinon-conrad}; however, in the subsequent years the subject has been radically altered by the introduction of some new ideas and techniques. While these ideas have their origins in work of this author on relative $p$-adic Hodge theory \cite{kedlaya-icm, kedlaya-witt} and were further developed in joint work with Liu \cite{kedlaya-liu1, kedlaya-liu2}, they are most widely known through Scholze's work on the theory and applications of \emph{perfectoid algebras} \cite{scholze1, scholze2, scholze-icm, scholze-torsion, scholze-weinstein}.

The purpose of this paper is to reinterpret and reprove two classic results of $p$-adic Hodge theory through the optic of perfectoid algebras (but in a self-contained manner).
The first of these results is a theorem of Fontaine and Wintenberger
\cite{fontaine-wintenberger} on the relationship between Galois theory in characteristic 0 and characteristic $p$.
\begin{theorem}[Fontaine-Wintenberger] \label{T:field of norms1}
For $\mu_{p^\infty}$ the group of all $p$-power roots of unity in an algebraic closure of $\Qp$,
the absolute Galois groups of the fields $\Fp((\overline{\pi}))$ and $\Qp(\mu_{p^\infty})$ are isomorphic (and even homeomorphic
as profinite topological groups).
\end{theorem}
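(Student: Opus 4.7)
The plan is to realize both Galois groups as the absolute Galois group of a single perfectoid field and its tilt, and then to invoke a tilting equivalence for finite étale algebras. Write $L = \Qp(\mu_{p^\infty})$ and $M = \Fp((\overline{\pi}))$, and let $K$ denote the $p$-adic completion of $L$.

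First I would verify that $K$ is a perfectoid field. Its norm is non-discrete with $p$-divisible value group, since $\zeta_{p^n} - 1$ has valuation $1/(p^{n-1}(p-1))$, and Frobenius on $\mathcal{O}_K/p$ is surjective because $(\zeta_{p^{n+1}} - 1)^p \equiv \zeta_{p^n} - 1 \pmod{p \mathcal{O}_L}$. A Krasner-type density argument, approximating roots of monic polynomials over $K$ by roots of polynomials over $L$, then shows that base extension induces an equivalence between categories of finite étale $L$-algebras and finite étale $K$-algebras, and hence $\Gal(\overline{L}/L) \cong \Gal(\overline{K}/K)$ as profinite topological groups.

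Second I would form the tilt $K^{\flat} := \Frac\bigl(\varprojlim_{x \mapsto x^p} \mathcal{O}_K / p\bigr)$, a perfect nonarchimedean field of characteristic $p$. The compatible system $(\zeta_{p^n} - 1)_n$ furnishes a distinguished element $\overline{\pi} \in K^{\flat}$, giving an embedding $M \hookrightarrow K^{\flat}$ with dense image and exhibiting $K^{\flat}$ as the completion of the perfection of $M$. Since perfection is purely inseparable and completion preserves finite separable extensions (again via Krasner), the same style of density argument gives $\Gal(\overline{M}/M) \cong \Gal(\overline{K^{\flat}}/K^{\flat})$.

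It then remains to prove a tilting equivalence: a Galois-compatible equivalence between the categories of finite étale $K$-algebras and finite étale $K^{\flat}$-algebras. This is the technical heart and the main obstacle. The expected strategy is to work at the integral level using Fontaine's ring $A_{\inf} = W(\mathcal{O}_{K^{\flat}})$ and the surjective theta map $A_{\inf} \to \mathcal{O}_K$, and to prove an ``almost purity'' statement asserting that finite étale covers on the two sides correspond functorially modulo the maximal ideal of an algebraic closure; inverting $p$ then yields the equivalence on generic fibers. The direction that is genuinely difficult is the lifting from characteristic $p$ back to characteristic $0$: this is where the perfectoid hypothesis and the almost-mathematics framework are indispensable, since one needs to transport ramification-theoretic information across a comparison that is only faithful after neglecting a well-controlled class of ``almost zero'' modules.
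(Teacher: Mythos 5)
Your setup is sound and matches the paper's reductions: passing to the completion $K$ of $\Qp(\mu_{p^\infty})$ via Krasner's lemma, identifying the tilt $K^\flat$ with the completed perfection of $\Fp((\overline{\pi}))$, and reducing the theorem to a tilting equivalence between finite \'etale $K$-algebras and finite \'etale $K^\flat$-algebras (compare Remark~\ref{R:complete fields} and the statement of Theorem~\ref{T:field of norms general}). The gap is that you then stop: you name almost purity and the almost-mathematics framework as the ``expected strategy,'' but that is precisely the step that must be proved, and it is not the route this paper takes. The paper explicitly avoids almost ring theory (see Remark~\ref{R:scholze}), replacing it with a self-contained analysis of strict $p$-rings.

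Concretely, the paper's proof of the tilting equivalence rests on three ingredients absent from your proposal. First, the notion of a \emph{primitive} element $z\in W(\gotho_{K^\flat})$ together with a division algorithm modulo $z$ (Lemma~\ref{L:theta kernel}, with Lemmas~\ref{L:same norm} and \ref{C:same norm}); this yields Theorem~\ref{T:theta kernel}, which untilts any perfect analytic field $F$ of characteristic $p$ equipped with the ideal $(z)$ to a perfectoid field $W(\gotho_F)[p^{-1}]/(z)$, and hence inverts the tilt functor. Second, Lemma~\ref{L:lift finite}, which by averaging over $\Gal(\tilde{F}/K^\flat)$ for a Galois closure $\tilde{F}$ of a finite extension $F/K^\flat$ shows the untilt $L$ of $F$ satisfies $[L:K]=[F:K^\flat]$; this makes untilting degree-preserving without any almost-vanishing bookkeeping. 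Third, and most crucially, Lemma~\ref{L:alg closed case}: if $K^\flat$ is algebraically closed then so is $K$, proved by an explicit Newton-type iteration that extracts roots of monic polynomials over $\gotho_K$ by repeatedly solving auxiliary polynomial congruences over $\gotho_{K^\flat}$. This last lemma is what does the work that almost purity does in Scholze's proof, and it is what keeps the paper's argument elementary and free of both almost ring theory and higher ramification theory. To complete your proposal along your stated lines you would need to prove (not merely cite) the almost purity input; the paper's route shows this can be sidestepped entirely.
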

The original proof of Theorem~\ref{T:field of norms1} depends in a crucial way on higher ramification theory of local
fields, as developed for instance in the book of Serre \cite{serre-local-fields}. This causes difficulties
when trying to generalize Theorem~\ref{T:field of norms1}, e.g., to local fields with imperfect residue fields.
We expose here a new approach to Theorem~\ref{T:field of norms1} in which ramification theory plays no role; one instead makes a careful analysis of rings of Witt vectors over valuation rings. In the process, we obtain a far more general result, in which $\Qp(\mu_{p^\infty})$ can be replaced by any sufficiently ramified $p$-adic field; 
more precisely, we obtain a functorial (and hence compatible with Galois theory) correspondence between \emph{perfectoid fields} and
perfect fields of characteristic $p$.
This is the \emph{tilting correspondence} in the sense of \cite{scholze1}, which is proved using almost ring theory; our proof here is the somewhat more elementary argument found in \cite{kedlaya-liu1} (see Remark~\ref{R:scholze} for further discussion). As a historical note, we remark that we learned the key ideas from this proof from attending Coleman's 1997 Berkeley course ``Fontaine's theory of the mysterious functor,'' whose principal content appears in \cite{coleman-iovita}.

Our second topic is the description of continuous representations of $p$-adic Galois groups on $\Qp$-vector spaces
(such as might arise from \'etale cohomology with $p$-adic coefficients)
in terms of \emph{$(\varphi, \Gamma)$-modules}. The original description of this form was given by Fontaine
\cite{fontaine-phigamma}
in terms of a Cohen ring for a field of formal power series, and is an easy consequence of
Theorem~\ref{T:field of norms1}. Our main focus is the refinement of Fontaine's result by Cherbonnier and Colmez
\cite{cherbonnier-colmez},
in which the Cohen ring is replaced with a somewhat smaller ring of convergent power series
(see Theorem~\ref{T:overconvergent2} for the precise statement).
This refinement is critical to a number of applications of $p$-adic Hodge theory,
notably Colmez's construction of the $p$-adic Langlands correspondence for $\GL_2(\QQ_p)$
\cite{colmez-langlands}).

Existing proofs of the Cherbonnier-Colmez theorem, including a generalization to families of representations
by Berger and Colmez \cite{berger-colmez}, rely on some calculations involving a formalism for
decompletion in continuous Galois cohomology, inspired by results of Tate and Sen and later axiomatized by Colmez.
However, one can express the proof in such a way that one makes essentially the same calculations on
$(\varphi, \Gamma)$-modules as in \cite{cherbonnier-colmez}, but without any need to 
introduce the Tate-Sen formalism. Besides making the proof more transparent,
this approach gives rise to analogous results for representations of the \'etale fundamental groups of some rigid analytic spaces;
for instance, the theory of overconvergent relative $(\varphi, \Gamma)$-modules introduced by
Andreatta-Brinon \cite{andreatta-brinon} is generalized in 
\cite{kedlaya-liu2} using this approach. (It should similarly be possible to recover the results of \cite{berger-colmez} in this fashion, and even to obtain a common generalization with \cite{kedlaya-liu2}.)

\section{Comparison of Galois groups}

\subsection{Preliminaries on strict $p$-rings}

We begin by recalling some basic properties of strict $p$-rings underlying the constructions made later,
following the derivations in \cite[\S 5]{serre-local-fields}.
(All rings considered will be commutative and unital.)

\begin{lemma} \label{L:theta map}
For any ring $R$ and any nonnegative integer $n$, the map $x \mapsto x^{p^n}$ induces a well-defined multiplicative monoid map
$\theta_n: R/(p) \to R/(p^{n+1})$.
\end{lemma}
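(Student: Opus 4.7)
My plan is to reduce the statement to the following single step: if $a \equiv b \pmod{p^{m+1}}$ in any ring $R$, then $a^p \equiv b^p \pmod{p^{m+2}}$, for every integer $m \geq 0$. Granting this, the lemma follows by an easy induction on $n$: the base case $n=0$ is trivial (the map $x \mapsto x$ gives a well-defined map $R/(p) \to R/(p)$), and the inductive step applies the single-step assertion with $a = x^{p^n}$, $b = y^{p^n}$, $m = n$, once we know from the inductive hypothesis that $x^{p^n} \equiv y^{p^n} \pmod{p^{n+1}}$ whenever $x \equiv y \pmod p$. Multiplicativity is automatic, since $(xy)^{p^n} = x^{p^n} y^{p^n}$ holds identically in the commutative ring $R$, so once $\theta_n$ is well-defined as a map of sets it is a monoid homomorphism.

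To prove the single-step assertion, write $b = a + p^{m+1} c$ and expand by the binomial theorem:
\[
b^p = a^p + \sum_{k=1}^{p} \binom{p}{k} a^{p-k} (p^{m+1} c)^k.
\]
The $k$-th summand for $1 \leq k \leq p-1$ carries a factor of $\binom{p}{k} p^{k(m+1)}$, and the classical fact that $p \mid \binom{p}{k}$ for $1 \leq k \leq p-1$ yields a $p$-adic valuation of at least $1 + k(m+1) \geq 1 + (m+1) = m+2$. The final summand (at $k=p$) is $p^{p(m+1)} c^p$, which has $p$-adic valuation $p(m+1) \geq 2(m+1) \geq m+2$. Hence every term after $a^p$ lies in $(p^{m+2})$, giving $a^p \equiv b^p \pmod{p^{m+2}}$ as required.

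The only genuine content here is the divisibility $p \mid \binom{p}{k}$ for $1 \leq k \leq p-1$, which is immediate from the expression $\binom{p}{k} = \frac{p!}{k!\,(p-k)!}$: the numerator contains one factor of $p$ while the denominator contains none, since $k$ and $p-k$ are both strictly less than $p$. I do not expect any real obstacle; the proof is essentially a careful bookkeeping of $p$-adic valuations in the binomial expansion, packaged so that a short induction on $n$ promotes the Frobenius-type congruence one level at a time.
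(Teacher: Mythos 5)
Your proof is correct, and the overall strategy is the same as the paper's: reduce to the single-step assertion that $a \equiv b \pmod{p^{m+1}}$ implies $a^p \equiv b^p \pmod{p^{m+2}}$, then iterate. The only difference is the verification of that single step: the paper factors $x^p - y^p = (x-y)(x^{p-1} + \cdots + y^{p-1})$ and notes the second factor is $\equiv p x^{p-1}$ modulo $p^m$, while you expand $(a + p^{m+1}c)^p$ by the binomial theorem and track valuations via $p \mid \binom{p}{k}$ for $1 \leq k \leq p-1$. These are two standard and essentially interchangeable ways of seeing the same congruence; your version also writes out explicitly the induction on $n$ that the paper leaves implicit.
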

\begin{proof}
If $x \equiv y \pmod{p^m}$ for some positive integer $m$, then $x^p - y^p = (x-y)(x^{p-1} + \cdots + y^{p-1})$
and the latter factor is congruent to $px^{p-1}$ modulo $p^m$; hence $x^p \equiv y^p \pmod{p^{m+1}}$. This
proves that $\theta_n$ is well-defined; it is clear that it is multiplicative.
\end{proof}

\begin{defn}
A ring $R$ of characteristic $p$ is \emph{perfect} if the Frobenius homomorphism $x \mapsto x^p$ is a bijection;
this  forces $R$ to be reduced. (If $R$ is a field, then $R$ is perfect if and only if every finite extension of $R$
is separable.)
A \emph{strict $p$-ring} is a $p$-torsion-free, $p$-adically complete ring $R$ for which $R/(p)$ is perfect,
regarded as a topological ring using the $p$-adic topology.
\end{defn}

\begin{example} \label{exa:p-rings}
The ring $\Zp$ is a strict $p$-ring with $\Zp/(p) \cong \Fp$. Similarly,
for any (possibly infinite) set $X$,
if we write $\ZZ[X]$ for the polynomial ring over $\ZZ$ generated by $X$
and put $\ZZ[X^{p^{-\infty}}] = \cup_{n=0}^\infty \ZZ[X^{p^{-n}}]$, then
the $p$-adic completion $R$ of $\ZZ[X^{p^{-\infty}}]$ 
is a strict $p$-ring with $R/(p) \cong \Fp[X^{p^{-\infty}}]$.
\end{example}

\begin{lemma} \label{L:Teichmuller1}
Let $\overline{R}$ be a perfect ring of characteristic $p$,
let $S$ be a $p$-adically complete ring, and let $\pi: S \to S/(p)$ be the natural projection.
Let $\overline{t}: \overline{R} \to S/(p)$ be a ring homomorphism. Then there exists a unique multiplicative map $t:
\overline{R} \to S$ with $\pi \circ t = \overline{t}$. In fact,
$t(\overline{x}) \equiv x_n^{p^n} \pmod{p^{n+1}}$ for any nonnegative integer $n$ and any
$x_n \in S$ lifting $\overline{t}(\overline{x}^{p^{-n}})$.
\end{lemma}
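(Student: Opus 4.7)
The plan is to construct $t(\overline{x})$ as a $p$-adic limit and then extract both multiplicativity and uniqueness from the well-definedness of the monoid maps $\theta_n$ from Lemma~\ref{L:theta map}.

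First, fix $\overline{x} \in \overline{R}$. Since $\overline{R}$ is perfect, the $p$-power roots $\overline{x}^{p^{-n}}$ exist uniquely for all $n \geq 0$. For each $n$, choose any lift $x_n \in S$ of $\overline{t}(\overline{x}^{p^{-n}}) \in S/(p)$. The candidate for $t(\overline{x})$ is the limit of $x_n^{p^n}$ in the $p$-adic topology on $S$. To justify this, I would first apply Lemma~\ref{L:theta map} to the ring $S$: if $x_n, x_n'$ both lift $\overline{t}(\overline{x}^{p^{-n}})$, then $x_n \equiv x_n' \pmod{p}$, hence $x_n^{p^n} \equiv (x_n')^{p^n} \pmod{p^{n+1}}$. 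So the residue class of $x_n^{p^n}$ in $S/(p^{n+1})$ depends only on $\overline{x}$, not on the choice of lift.

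Next I would check the sequence $(x_n^{p^n})_{n \geq 0}$ is Cauchy. Because $\overline{t}$ is a ring homomorphism, $x_{n+1}^p$ is a lift of $\overline{t}(\overline{x}^{p^{-n}})$, so by the previous paragraph $(x_{n+1}^p)^{p^n} = x_{n+1}^{p^{n+1}} \equiv x_n^{p^n} \pmod{p^{n+1}}$. Since $S$ is $p$-adically complete, the sequence converges; define $t(\overline{x})$ to be its limit. The congruence $t(\overline{x}) \equiv x_n^{p^n} \pmod{p^{n+1}}$ then holds by construction. Multiplicativity follows because for $\overline{x},\overline{y} \in \overline{R}$ one may choose $x_n, y_n$ lifting $\overline{t}(\overline{x}^{p^{-n}}), \overline{t}(\overline{y}^{p^{-n}})$, so that $x_n y_n$ lifts $\overline{t}((\overline{xy})^{p^{-n}})$, whence $t(\overline{x})t(\overline{y}) \equiv x_n^{p^n} y_n^{p^n} = (x_n y_n)^{p^n} \equiv t(\overline{xy}) \pmod{p^{n+1}}$ for all $n$. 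Taking $n = 0$ gives $\pi \circ t = \overline{t}$.

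For uniqueness, suppose $t': \overline{R} \to S$ is another multiplicative lift of $\overline{t}$. Then for each $n$, multiplicativity forces $t'(\overline{x}) = t'(\overline{x}^{p^{-n}})^{p^n}$, and $t'(\overline{x}^{p^{-n}})$ is some lift of $\overline{t}(\overline{x}^{p^{-n}})$ in $S$. Applying the well-definedness already established, $t'(\overline{x}) \equiv x_n^{p^n} \equiv t(\overline{x}) \pmod{p^{n+1}}$ for all $n$, so $t' = t$ by $p$-adic separatedness.

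The only genuine obstacle is the well-definedness in the first paragraph, i.e., the independence of $x_n^{p^n} \bmod p^{n+1}$ from the choice of lift $x_n$; but this is exactly what Lemma~\ref{L:theta map} was set up to provide, so with that lemma in hand the rest is essentially bookkeeping about $p$-adic limits.
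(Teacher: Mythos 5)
Your proof is correct and is essentially the same argument the paper has in mind: the paper's proof consists solely of the sentence ``This is immediate from Lemma~\ref{L:theta map},'' and your write-up is exactly the standard unwinding of that remark (well-definedness of $x_n^{p^n}$ modulo $p^{n+1}$ via $\theta_n$, Cauchyness from $x_{n+1}^p$ lifting $\overline{t}(\overline{x}^{p^{-n}})$, then multiplicativity, the $n=0$ case, and uniqueness via $p$-adic separatedness). No gaps; you have simply supplied the bookkeeping the paper leaves to the reader.
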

\begin{proof}
This is immediate from Lemma~\ref{L:theta map}.
\end{proof}

\begin{defn} \label{D:Teichmuller}
Let $R$ be a strict $p$-ring.
By the case $S=R$ of Lemma~\ref{L:Teichmuller1}, 
the projection $R \to R/(p)$ admits a unique multiplicative section $\left[\bullet \right]: R/(p) \to R$,
called the \emph{Teichm\"uller map}. (For example, the image of 
$\left[\bullet \right]: \Fp \to \Zp$ consists of 0 together with the $(p-1)$-st roots of unity in $\Zp$.)
Each $x \in R$ admits a unique representation as a $p$-adically convergent sum
$\sum_{n=0}^\infty p^n [\overline{x}_n]$ for some elements $\overline{x}_n \in R/(p)$, called the 
\emph{Teichm\"uller coordinates} of $x$.
\end{defn}

\begin{lemma} \label{L:Teichmuller2}
Let $R$ be a strict $p$-ring, let $S$ be a $p$-adically complete ring, and let $\pi: S \to S/(p)$ be the natural projection.
Let $t: R/(p) \to S$
be a multiplicative map such that $\overline{t} = \pi \circ t$ is a ring homomorphism. 
Then the formula
\begin{equation} \label{eq:Teichmuller homomorphism}
T\left( \sum_{n=0}^\infty p^n [\overline{x_n}] \right) = \sum_{n=0}^\infty p^n t(\overline{x}_n) \qquad (\overline{x}_0, \overline{x}_1, \dots \in R/(p))
\end{equation}
defines a (necessarily unique) $p$-adically continuous
homomorphism $T: R \to S$ such that
$T \circ \left[\bullet \right] = t$.
\end{lemma}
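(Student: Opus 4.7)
The plan is to verify the asserted properties of $T$ in sequence, with the ring-homomorphism statement as the crux. That $T$ is well-defined as a set map follows from the uniqueness of Teichm\"uller coordinates (Definition~\ref{D:Teichmuller}); convergence of the defining series in $S$ uses $p$-adic completeness of $S$; continuity is transparent from the formula; $T \circ [\bullet] = t$ holds by construction; and uniqueness of any $p$-adically continuous $T$ with $T \circ [\bullet] = t$ is then forced by continuity together with the defining formula on truncations.

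Multiplicativity I would handle first, as the lighter half. From $[\overline{a}] \cdot [\overline{b}] = [\overline{a} \overline{b}]$ (multiplicativity of the Teichm\"uller map) and the multiplicativity of $t$, one has $T([\overline{a}]\,[\overline{b}]) = t(\overline{a}\overline{b}) = t(\overline{a})\,t(\overline{b}) = T([\overline{a}])\,T([\overline{b}])$. Extending to arbitrary elements uses distributivity, $p$-adic continuity, and a modular comparison of the Teichm\"uller expansions of products, structurally parallel to (but simpler than) the additive case below.

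Additivity is the main obstacle, and I would establish $T(x+y) \equiv T(x) + T(y) \pmod{p^{n+1}}$ separately for each $n \geq 0$. The key input is Lemma~\ref{L:Teichmuller1}, applied simultaneously to $[\bullet]\colon R/(p) \to R$ and to $t\colon R/(p) \to S$. It yields, for any $x = \sum_k p^k [\overline{x_k}] \in R$,
\[
x \equiv \sum_{k=0}^{n} p^k \hat{X}_k^{p^{n-k}} \pmod{p^{n+1}}, \qquad T(x) \equiv \sum_{k=0}^{n} p^k \, t\bigl(\overline{x_k}^{\, p^{-(n-k)}}\bigr)^{p^{n-k}} \pmod{p^{n+1}},
\]
for any lifts $\hat{X}_k \in R$ of $\overline{x_k}^{\, p^{-(n-k)}}$; in particular, $x \bmod p^{n+1}$ and $T(x) \bmod p^{n+1}$ arise from a single $\ZZ$-polynomial expression in lifts, evaluated in $R$ and in $S$ respectively. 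Writing $x$, $y$, and $z = x+y$ this way, the identity $x+y = z$ in $R$ forces the Teichm\"uller coordinates of $z$ (and their iterated $p$-th roots) to be \emph{universal} polynomials in those of $x$ and $y$, obtained by iterating the ``carry'' $([\overline{a}] + [\overline{b}] - [\overline{a} + \overline{b}])/p \in R$. Applying $t$ and using that $\overline{t} = \pi \circ t$ is a ring homomorphism on $R/(p)$, these universal identities transfer to $S$ modulo $p^{n+1}$, yielding the desired congruence. The main obstacle I anticipate is consistent book-keeping of lifts between $R$ and $S$; committing to the canonical choices $\hat{X}_k = [\overline{x_k}^{\, p^{-(n-k)}}]$ in $R$ and $t(\overline{x_k}^{\, p^{-(n-k)}})$ in $S$ is precisely what makes the transfer uniform in $n$.
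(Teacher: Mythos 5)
Your sketch captures the right ingredients (Lemma~\ref{L:Teichmuller1} applied in parallel to $\left[\bullet\right]$ and $t$, reduction modulo $p^{n+1}$) but stops short of the actual argument at the one point where there is real content, and it has a logical ordering issue.

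On ordering: you propose to handle multiplicativity first as ``the lighter half,'' but in fact general multiplicativity is downstream of additivity. To pass from $T([\overline{a}][\overline{b}]) = T([\overline{a}])T([\overline{b}])$ to $T(xy) = T(x)T(y)$ for arbitrary $x,y$, one distributes $T$ over the double sum $\sum_{m,n} p^{m+n}[\overline{x}_m \overline{y}_n]$, and this distribution \emph{is} additivity (plus continuity). The paper does exactly this, and accordingly proves additivity first and then observes that multiplicativity ``follows formally.'' Your sketch tacitly acknowledges the dependence (``distributivity\ldots the additive case below''), but the stated plan should be reversed.

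The more serious gap is in the additivity step. You reduce to the claim that the Teichm\"uller coordinates of $z = x+y$ are \emph{universal} polynomials in those of $x$ and $y$, and that these universal identities ``transfer to $S$.'' That heuristic is correct in spirit, but as stated it is circular in this paper: the universal formulas of that type are precisely the content of Remark~\ref{R:addition formula}, whose validity for arbitrary strict $p$-rings is deduced \emph{from} Lemma~\ref{L:Teichmuller2}. If instead you try to set up the transfer directly (via a ring map from the free strict $p$-ring $W(\Fp[X^{p^{-\infty}}, Y^{p^{-\infty}}])$ to $S$), you find yourself needing to establish that such a map exists and is a homomorphism --- which is again the lemma in a special case, not a simplification of it. The paper sidesteps all of this with a concrete, self-contained induction on $n$: writing $x = [\overline{x}]+px_1$, $y = [\overline{y}]+py_1$, $z = [\overline{z}]+pz_1$, it uses Lemma~\ref{L:Teichmuller1} to get $[\overline{z}] \equiv ([\overline{x}^{p^{-n}}]+[\overline{y}^{p^{-n}}])^{p^n} \pmod{p^{n+1}}$ and the parallel congruence for $t(\overline{z})$, expands the binomial, and then --- this is the step your sketch is missing --- observes that the resulting carry $\sum_{i=1}^{p^n-1}\binom{p^n}{i}[\overline{x}^{ip^{-n}}\overline{y}^{1-ip^{-n}}]$ equals $-p(z_1-x_1-y_1)$, to which one can apply $T$ and the inductive hypothesis modulo $p^n$. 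Adding the two congruences closes the induction. Without some concrete mechanism of this kind, ``consistent book-keeping of lifts'' is not merely an anticipated obstacle; it is the unproven heart of the lemma.
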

\begin{proof}
We check by induction that for each positive integer $n$, $T$ induces an additive map $R/(p^n) \to S/(p^n)$.
This holds for $n=1$ because $\pi \circ t$ is a homomorphism. Suppose the claim holds for some $n \geq 1$.
For $x = [\overline{x}] + p x_1, y = [\overline{y}] + py_1, z = [\overline{z}] + pz_1 \in R$
with $x+y=z$,
\begin{align*}
[\overline{z}] &\equiv ([\overline{x}^{p^{-n}}] + [\overline{y}^{p^{-n}}])^{p^{n}} \pmod{p^{n+1}} \\
t(\overline{z}) &\equiv (t(\overline{x}^{p^{-n}}) + t(\overline{y}^{p^{-n}}))^{p^{n}} \pmod{p^{n+1}}
\end{align*}
by Lemma~\ref{L:Teichmuller1}. In particular,
\begin{equation} \label{eq:teichmuller1}
T([\overline{z}]) - T([\overline{x}]) - T([\overline{y}]) \equiv
\sum_{i=1}^{p^{n}-1}
 \binom{p^{n}}{i} t(\overline{x}^{ip^{-n}} \overline{y}^{1-ip^{-n}}) \pmod{p^{n+1}}.
\end{equation}
On the other hand, since $\frac{1}{p} \binom{p^{n}}{i} \in \ZZ$ for $i=1,\dots,p^{n}-1$,
we may write
\[
z_1 - x_1 - y_1
= \frac{[\overline{x}] + [\overline{y}] - [\overline{z}]}{p} \equiv - \sum_{i=1}^{p^{n}-1}
\frac{1}{p} \binom{p^{n}}{i} [\overline{x}^{ip^{-n}} \overline{y}^{1-ip^{-n}}] \pmod{p^n},
\]
apply $T$, invoke the induction hypothesis on both sides, and multiply by $p$ to obtain
\begin{equation} \label{eq:teichmuller2}
pT(z_1) - pT(x_1) - pT(y_1)
\equiv - \sum_{i=1}^{p^{n}-1}
 \binom{p^{n}}{i} t(\overline{x}^{ip^{-n}} \overline{y}^{1-ip^{-n}}) \pmod{p^{n+1}}.
\end{equation}
Since $T(x) = T([\overline{x}]) + p T(x_1)$ and so on, we may add \eqref{eq:teichmuller1}
and \eqref{eq:teichmuller2} to deduce that $T(z) - T(x) - T(y) \equiv 0  \pmod{p^{n+1}}$,
completing the induction.
Hence $T$ is additive; it is also clear that $T$ is $p$-adically continuous. From this it follows formally that $T$ is multiplicative:
for $x = \sum_{n=0}^\infty p^n [\overline{x}_n]$,
$y = \sum_{n=0}^\infty p^n [\overline{y}_n]$,
\[
T(x) T(y) = \sum_{m,n=0}^\infty p^{m+n} t(\overline{x}_m) t(\overline{y}_n) = \sum_{m,n=0}^\infty p^{m+n} t(\overline{x}_m \overline{y}_n) = T\left( \sum_{m,n=0}^\infty p^{m+n} [\overline{x}_m \overline{y}_n] \right)= T(xy).
\]
We conclude that $T$ is a ring homomorphism as claimed.
\end{proof}

\begin{remark} \label{R:addition formula}
Take $R$ as in Example~\ref{exa:p-rings} with
$X = \{\overline{x}, \overline{y}\}$. 
By Lemma~\ref{L:Teichmuller1}, we have
\begin{equation} \label{eq:witt formulas}
[\overline{x}] - [\overline{y}] = \sum_{n=0}^\infty p^n [P_n(\overline{x}, \overline{y})]
\end{equation}
for some $P_n(\overline{x}, \overline{y})$ in the ideal $(\overline{x}^{p^{-\infty}}, \overline{y}^{p^{-\infty}})
\subset \Fp[\overline{x}^{p^{-\infty}}, \overline{y}^{p^{-\infty}}]$ 
and homogeneous of degree 1.
By Lemma~\ref{L:Teichmuller2}, \eqref{eq:witt formulas}
is also valid for any strict $p$-ring $R$ and any $\overline{x}, \overline{y} \in R/(p)$.
One can similarly derive formulas for arithmetic in a strict $p$-ring in terms
of Teichm\"uller coordinates; these can also be obtained using Witt vectors (Definition~\ref{D:Witt vectors}).
\end{remark}

\begin{theorem} \label{T:Witt}
The functor $R \rightsquigarrow R/(p)$ from strict $p$-rings to perfect rings of characteristic $p$
is an equivalence of categories.
\end{theorem}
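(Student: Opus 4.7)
My plan is to verify this equivalence by establishing full faithfulness and essential surjectivity separately. For full faithfulness, given strict $p$-rings $R, S$ and a ring homomorphism $\overline{f}: R/(p) \to S/(p)$, I would apply Lemma~\ref{L:Teichmuller1} (with $\overline{R} = R/(p)$) to produce the unique multiplicative lift $t: R/(p) \to S$ of $\overline{f}$, and then use Lemma~\ref{L:Teichmuller2} to extend $t$ to a ring homomorphism $T: R \to S$ which by construction reduces to $\overline{f}$ modulo $p$. For uniqueness, any competing lift $F: R \to S$ yields a multiplicative map $F \circ [\bullet]: R/(p) \to S$ lifting $\overline{f}$, forced to equal $t$ by the uniqueness assertion of Lemma~\ref{L:Teichmuller1}; since any ring homomorphism into the $p$-adically complete ring $S$ is automatically $p$-adically continuous (as $F(p^n R) \subset p^n S$), this forces $F$ to agree with $T$ on every Teichm\"uller expansion, hence on all of $R$.

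For essential surjectivity, given a perfect ring $\overline{R}$ of characteristic $p$, I would present it as a quotient $\overline{R} = \overline{A}/I$ where $\overline{A} = \Fp[X^{p^{-\infty}}]$ for some set $X$ (e.g.\ $X = \overline{R}$). Because $\overline{R}$ is reduced, the ideal $I$ is closed under $p$-th roots in $\overline{A}$, i.e., $\overline{a}^p \in I$ implies $\overline{a} \in I$. Writing $A$ for the strict $p$-ring of Example~\ref{exa:p-rings}, so that $A/(p) = \overline{A}$, I would form
\[
\tilde{I} = \left\{ \sum_{n=0}^\infty p^n [\overline{x}_n] \in A : \overline{x}_n \in I \text{ for all } n \right\}
\]
and aim to show this is a closed ideal. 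Granting this, the quotient $R := A/\tilde{I}$ is $p$-adically complete (being a quotient of $A$ by a closed subgroup), $p$-torsion-free (since $p \sum p^n [\overline{x}_n] = \sum p^{n+1} [\overline{x}_n]$ lies in $\tilde{I}$ precisely when every $\overline{x}_n \in I$, i.e., exactly when $x \in \tilde{I}$), and satisfies $R/(p) = \overline{A}/I = \overline{R}$.

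The main obstacle will be verifying that $\tilde{I}$ is closed under addition and under $A$-multiplication. I expect the key input to come from iterating Remark~\ref{R:addition formula}: for each $n$, the $n$-th Teichm\"uller coordinate of $x + y$ should be expressible as an integer-coefficient polynomial in the $p$-power roots of the Teichm\"uller coordinates of $x$ and $y$, lying in the ideal generated by those roots (since the sum vanishes when all coordinates do). Because $I$ is closed under $p$-th roots and is an $\overline{A}$-ideal, every such polynomial evaluates into $I$, giving additive closure. Closure under multiplication then reduces to the additive case via the identity $[\overline{a}][\overline{x}] = [\overline{a}\,\overline{x}]$ and distributivity, writing $ax = \sum_{i,j} p^{i+j} [\overline{a}_i \overline{x}_j]$ and noting that all $\overline{a}_i \overline{x}_j \in I$ when $x \in \tilde{I}$. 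The bookkeeping required to promote Remark~\ref{R:addition formula} to a statement about arbitrary Teichm\"uller expansions is the most delicate step, but is conceptually a direct iteration of the argument already used for differences of single Teichm\"uller elements.
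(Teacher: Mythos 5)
Your proposal is correct and follows essentially the same route as the paper: full faithfulness via Lemma~\ref{L:Teichmuller1}/Lemma~\ref{L:Teichmuller2}, and essential surjectivity by presenting $\overline{R}$ as a quotient of $\Fp[X^{p^{-\infty}}]$, lifting the kernel to a Teichm\"uller-coordinatewise ideal in the $p$-adic completion of $\ZZ[X^{p^{-\infty}}]$ (using Remark~\ref{R:addition formula} for closure under addition), and passing to the quotient. You simply make explicit a few verifications the paper leaves implicit ($p$-torsion-freeness, the role of $I$ being closed under $p$-th roots since $\overline{R}$ is reduced), so the two arguments are the same in substance.
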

\begin{proof}
Full faithfulness follows from Lemma~\ref{L:Teichmuller2}.
To prove essential surjectivity, 
let $\overline{R}$ be a perfect ring of characteristic $p$,
choose a surjection $\psi: \Fp[X^{p^{-\infty}}] \to \overline{R}$ for some set $X$, and put $\overline{I} = \ker(\psi)$.
Let $R_0$ be the $p$-adic completion of $\ZZ[X^{p^{-\infty}}]$; as in Example~\ref{exa:p-rings}, this is a strict $p$-ring
with $R_0/(p) \cong \Fp[X^{p^{-\infty}}]$.
Put $I = \{\sum_{n=0}^\infty p^n [\overline{x}_n] \in R_0: \overline{x}_0, \overline{x}_1,\dots \in \overline{I}\}$;
this forms an ideal in $R_0$ by Remark~\ref{R:addition formula}.
Then $R = R_0/I$ is a strict $p$-ring with $R/(p) \cong \overline{R}$.
\end{proof}

\begin{defn} \label{D:Witt vectors}
For $\overline{R}$ a perfect ring of characteristic $p$,
we write $W(\overline{R})$ for the unique (by Theorem~\ref{T:Witt})
strict $p$-ring with $W(\overline{R})/(p) \cong \overline{R}$.
This is meant as a reminder that $W(\overline{R})$ also occurs as the 
ring of \emph{$p$-typical Witt vectors} over $\overline{R}$; that construction
obtains the formulas for arithmetic in Teichm\"uller coordinates in an elegant manner linked to
symmetric functions.
\end{defn}

\begin{remark} \label{R:local ring}
Let $R$ be a strict $p$-ring. Since $R$ is $p$-adically complete, the Jacobson radical of $R$ contains $p$. In particular, if $\overline{R}$ is local, then so is $R$.
\end{remark}

\subsection{Perfect norm fields}

Theorem~\ref{T:field of norms1} is obtained by matching up the Galois correspondences of the fields
$\Fp((\overline{\pi}))$ and $\Qp(\mu_{p^\infty})$. The approach taken by Fontaine and Wintenberger is to pass from
characteristic $0$ to characteristic $p$ by looking at certain sequences of elements of finite extensions of $\Qp$
in which each term is obtained from the succeeding term by taking a certain norm between fields; the resulting functor
is thus commonly called the functor of \emph{norm fields}. It is here that some careful analysis of higher ramification
theory is needed in order to make the construction work.

While the Fontaine-Wintenberger construction gives rise directly to finite extensions of $\Fp((\overline{\pi}))$, it was later 
observed that a simpler construction (used repeatedly by Fontaine in his further study of $p$-adic Hodge theory)
could be used to obtain the perfect closures of these finite extensions. Originally the construction of these
\emph{perfect norm fields} depended crucially on the prior construction of the \emph{imperfect norm fields} of
Fontaine-Wintenberger (as in the exposition in \cite{brinon-conrad}), but we will instead work directly with
the perfect norm fields.

\begin{defn}
By an \emph{analytic field}, we will mean a field $K$ which is complete with respect to a
multiplicative nonarchimedean norm $\left|\bullet \right|$. For $K$ an analytic field, 
put $\gotho_K = \{x \in K: \left|x\right| \leq 1\}$;
this is a local ring with maximal ideal $\gothm_K = \{x \in K: \left|x\right| < 1\}$.
We say $K$ has \emph{mixed characteristics} if $\left|p\right| = p^{-1}$ (so $K$ has characteristic $0$)
and the residue field $\kappa_K = \gotho_K/\gothm_K$ of $K$ has characteristic $p$.
\end{defn}

\begin{remark} \label{R:extend analytic}
Any finite extension $L$ of an analytic field $K$ is itself an analytic field; that is,
the norm extends uniquely to a multiplicative norm on the extension field. 
A key fact about this extension is \emph{Krasner's lemma}: if $P(T) \in K[T]$ splits over $L$ as $\prod_{i=1}^n (T - \alpha_i)$
and $\beta \in K$ satisfies $\left|\alpha_1 - \beta\right| < \left|\alpha_1 - \alpha_i\right|$ for $i=2,\dots,n$, then $\alpha_1 \in K$.

A crucial consequence of Krasner's lemma 
is that an infinite algebraic extension of $K$ is separably closed if and only if its completion is algebraically closed.
More precisely, Krasner's lemma is only needed for the ``if'' implication; the ``only if'' implication follows from the fact that
the roots of a polynomial vary continuously in the coefficients. This principle also appears in the proof
of Lemma~\ref{L:alg closed case}.
\end{remark}
\begin{remark} \label{R:complete fields}
Using Krasner's lemma, one may show
that $\Qp(\mu_{p^\infty})$ and its completion have the same Galois group. Similarly,
$\Fp((\overline{\pi}))$, its perfect closure, and the completion of the perfect closure all have the same Galois group.
Consequently, from the point of view of proving Theorem~\ref{T:field of norms1},
there is no harm in considering only analytic fields.
\end{remark}

\begin{defn} \label{D:norm field}
Let $K$ be an analytic field of mixed characteristics.
Let $\gotho_{K'}$ be the inverse limit $\varprojlim \gotho_K/(p)$ under Frobenius.
In symbols,
\[
\gotho_{K'} = \left\{ (\overline{x}_n) \in \prod_{n=0}^\infty \gotho_K/(p): \overline{x}_{n+1}^p = \overline{x}_n \right\}.
\]
By construction, this is a perfect ring of characteristic $p$: the inverse of Frobenius is the shift map
$(\overline{x}_n)_{n=0}^\infty \mapsto (\overline{x}_{n+1})_{n=0}^\infty$.
By applying Lemmas~\ref{L:Teichmuller1} and~\ref{L:Teichmuller2} to the homomorphism $\overline{\theta}:
\gotho_{K'} \to \gotho_K/(p)$,
we obtain a multiplicative map $\theta: \gotho_{K'} \to \gotho_K$ and
a homomorphism $\Theta: W(\gotho_{K'}) \to \gotho_K$.

For $\overline{x} = (\overline{x}_0, \overline{x}_1,\dots) \in \gotho_{K'}$,
define $\left|\overline{x}\right|' = \left|\theta(\overline{x})\right|$.
If we lift $\overline{x}_n \in \gotho_K/(p)$ to $x_n \in \gotho_K$, then
$\left|\overline{x}\right|' = \left|x_n\right|^{p^n}$ whenever $\left|x_n\right| > \left|p\right|$. Consequently, 
$\left|\bullet\right|'$ is a multiplicative nonarchimedean norm on $\gotho_{K'}$ under which $\gotho_{K'}$ is complete
(given any Cauchy sequence, the terms in any particular position eventually stabilize).
\end{defn}

\begin{lemma} \label{L:valuation ring}
With notation as in Definition~\ref{D:norm field},
for $\overline{x}, \overline{y} \in \gotho_{K'}$, $\overline{x}$ is divisible by $\overline{y}$ if and only if 
$\left|\overline{x}\right|' \leq \left|\overline{y}\right|'$.
\end{lemma}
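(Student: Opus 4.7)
The plan is to prove the two directions separately. The ``only if'' direction is immediate from multiplicativity of $|\cdot|'$: if $\overline{x} = \overline{y}\,\overline{w}$ with $\overline{w} \in \gotho_{K'}$, then $\theta(\overline{w}) \in \gotho_K$ so $|\overline{w}|' \leq 1$, and hence $|\overline{x}|' \leq |\overline{y}|'$.

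For the ``if'' direction, first dispose of the degenerate case $\overline{y} = 0$: the hypothesis forces $\theta(\overline{x}) = 0$, whence $\theta(\overline{x}^{p^{-n}}) = 0$ for every $n$ (since $\gotho_K$ is a domain and $\theta(\overline{x}^{p^{-n}})^{p^n} = \theta(\overline{x})$ by multiplicativity of $\theta$), so every coordinate of $\overline{x}$ vanishes and $\overline{x} = 0$. In the substantive case $\overline{y} \neq 0$, the key idea is to use Teichm\"uller lifts so that compatibility with Frobenius holds \emph{exactly} rather than merely modulo $p$: define
\begin{equation*}
x_k := \theta(\overline{x}^{p^{-k}}), \qquad y_k := \theta(\overline{y}^{p^{-k}}) \qquad (k \geq 0).
\end{equation*}
Multiplicativity of $\theta$ yields $x_k^p = x_{k-1}$ and $y_k^p = y_{k-1}$ as genuine equalities in $\gotho_K$; each lifts the respective $k$-th coordinate modulo $p$; and one computes $|x_k| = |\overline{x}|'^{p^{-k}}$ and $|y_k| = |\overline{y}|'^{p^{-k}}$ directly from the definition of $|\cdot|'$.

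From here the construction writes itself. Since $y_k \neq 0$, the quotient $z_k := x_k/y_k$ exists in $K$, and the assumption $|\overline{x}|' \leq |\overline{y}|'$ forces $|z_k| \leq 1$, so $z_k \in \gotho_K$; then $z_{k+1}^p = z_k$ in $\gotho_K$, so the reductions $\overline{z}_k := z_k \bmod p$ assemble to an element $\overline{z} \in \gotho_{K'}$ with $\overline{y}_k \overline{z}_k = \overline{x}_k$ for all $k$, yielding $\overline{y}\,\overline{z} = \overline{x}$. The obstacle this approach circumvents is that arbitrary lifts would only satisfy $x_k^p \equiv x_{k-1} \pmod{p}$, which is too weak to make the divisions $x_k/y_k$ compatible across Frobenius levels without a further approximation argument; choosing Teichm\"uller lifts is precisely what makes this one-step construction succeed.
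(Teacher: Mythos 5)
Your proof is correct, and it takes a genuinely cleaner route than the paper's. Both arguments aim to produce a compatible sequence of elements of $\gotho_K$ whose reductions modulo $p$ assemble into the sought quotient $\overline{z} \in \gotho_{K'}$. The paper chooses \emph{arbitrary} lifts $x_n, y_n \in \gotho_K$ of the coordinates $\overline{x}_n, \overline{y}_n$; these only satisfy $x_{n+1}^p \equiv x_n \pmod{p}$ and $y_{n+1}^p \equiv y_n \pmod{p}$, so the quotients $z_n = x_n/y_n$ are not exactly Frobenius-compatible. The paper must therefore fix a threshold $n_0$ with $\left| y_{n_0} \right| \geq p^{-1+1/p}$, estimate $\left| z_{n+1}^p - z_n \right| \leq p^{-1/p}$, upgrade this to $\left| z_{n+2}^{p^2} - z_{n+1}^p \right| \leq p^{-1}$ via the binomial expansion, and finally define the $n$-th coordinate of $\overline{z}$ as the reduction of $z_{n+1}^p$ rather than of $z_n$ itself. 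You instead observe that the multiplicative map $\theta$ already constructed in Definition~\ref{D:norm field} furnishes canonical lifts $x_k = \theta(\overline{x}^{p^{-k}})$ and $y_k = \theta(\overline{y}^{p^{-k}})$, which by multiplicativity of $\theta$ satisfy $x_{k+1}^p = x_k$ and $y_{k+1}^p = y_k$ \emph{exactly}; hence $z_{k+1}^p = z_k$ on the nose, the norms of $z_k$ are computed exactly from $\left|\overline{x}\right|'$ and $\left|\overline{y}\right|'$, and the reductions assemble directly with no threshold and no approximation. Your treatment of the degenerate case $\overline{y} = 0$ (using that $\gotho_K$ is a domain to conclude each coordinate of $\overline{x}$ vanishes) is also correct and cleaner than leaving it implicit. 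The one thing the paper's version buys is that it never needs to invoke $\theta$, so it illustrates the approximation-and-shift technique in a self-contained way; but given that $\theta$ is already in hand, your argument is the more natural one.
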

\begin{proof}
If $\overline{x}$ is divisible by $\overline{y}$, then
$\left|\overline{x}\right|' \leq \left|\overline{y}\right|' \left|\overline{x}/\overline{y}\right|' \leq \left|\overline{y}\right|'$.
Conversely, suppose $\left|\overline{x}\right|' \leq \left|\overline{y}\right|'$. If $\overline{y} = 0$, then $\overline{x} = 0$ also
and there is nothing more to check. Otherwise, write $\overline{x} = (\overline{x}_0, \overline{x}_1, \dots)$,
$\overline{y} = (\overline{y}_0, \overline{y}_1, \dots)$, and choose lifts $x_n, y_n$ of $\overline{x}_n, \overline{y}_n$
to $\gotho_K$. Since $\overline{y} \neq 0$, we can find an integer
$n_0 \geq 0$ such that $\left|y_n\right| \geq p^{-1+1/p}$ for $n = n_0$, and hence also for $n \geq n_0$.
Then for $n \geq n_0$, the elements $z_n = x_n/y_n \in \gotho_K$ have the property that
$\left|z_{n+1}^p - z_n\right| \leq p^{-1/p}$. By writing
$z_{n+2}^{p^2} = (z_{n+1} + (z_{n+2}^p - z_{n+1}))^p$, we deduce that
$\left|z_{n+2}^{p^2} - z_{n+1}^p\right| \leq p^{-1}$. We thus produce an element $\overline{z} = (\overline{z}_0, \overline{z}_1,\dots)$
with $\overline{x} = \overline{y} \overline{z}$ by taking $\overline{z}_n$ to be the reduction of 
$z_{n+1}^p$ for $n \geq n_0+1$.
\end{proof}

\begin{defn}
Keep notation as in Definition~\ref{D:norm field}. By Lemma~\ref{L:valuation ring},
$\gotho_{K'}$ is the valuation ring in an analytic field $K'$ which is
perfect of characteristic $p$.
We call $K'$ the \emph{perfect norm field} associated to $K$.
(Scholze \cite{scholze1} calls $K'$ the \emph{tilt} of $K$ and denotes it by $K^{\flat}$.)
\end{defn}

\begin{exercise}
The formula $x \mapsto (\theta(x^{p^{-n}}))_{n=0}^\infty$ defines a multiplicative bijection from $\gotho_{K'}$ to
the inverse limit of multiplicative monoids (but not of rings)
\[
\left\{ (x_n) \in \prod_{n=0}^\infty \gotho_K: x_{n+1}^p = x_n \right\}.
\]
This map extends to a multiplicative bijection from $K'$ to the inverse limit of multiplicative monoids
\[
\left\{ (x_n) \in \prod_{n=0}^\infty K: x_{n+1}^p = x_n \right\}.
\]
\end{exercise}

\subsection{Perfectoid fields}

In general, the perfect norm field functor is far from being faithful.
\begin{exercise} \label{exercise:compare residue fields}
Let $K$ be a discretely valued analytic field of mixed characteristics. Then
$K'$ is isomorphic to the maximal perfect subfield of $\kappa_K$.
\end{exercise}

To get around this problem, we restrict attention to analytic fields with a great deal of ramification.
(The term \emph{perfectoid} is due to Scholze \cite{scholze1}.)
\begin{defn} \label{D:deeply ramified}
An analytic field $K$ is \emph{perfectoid} if $K$ is of mixed characteristics, $K$ is not discretely valued,
and the $p$-th power Frobenius endomorphism
on $\gotho_K/(p)$ is surjective.
\end{defn}

The following statements imply that taking the perfect norm field of a perfectoid analytic field does not concede
too much information.
\begin{lemma} \label{L:perfectoid}
Let $K$ be a perfectoid analytic field.
\begin{enumerate}
\item[(a)]
We have $\left|K^\times\right| = \left|(K')^\times\right|$.
\item[(b)]
The projection $\gotho_{K'} \to \varprojlim \gotho_K/(p) \to \gotho_K/(p)$ is surjective
and induces an isomorphism $\gotho_{K'}/(\overline{z}) \cong \gotho_K/(p)$ for any 
$\overline{z} \in \gotho_{K'}$ with $\left|\overline{z}\right|' = p^{-1}$ (which exists by (a)).
In particular, we obtain a natural isomorphism $\kappa_{K'} \cong \kappa_K$.
\item[(c)]
The map $\Theta: W(\gotho_{K'}) \to \gotho_K$ is also surjective. (The kernel of $\Theta$ turns out to be a principal ideal;
see Corollary~\ref{C:theta kernel}.)
\end{enumerate}
\end{lemma}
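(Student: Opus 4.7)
My plan is to prove the three parts in order, using (a) to supply the element $\overline z$ needed in (b), and using (b) to bootstrap from surjectivity modulo $p$ to surjectivity of $\Theta$ in (c).

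For (a), the inclusion $|(K')^\times| \subseteq |K^\times|$ is immediate from $|\overline x|' = |\theta(\overline x)|$. For the reverse, given $y \in \gotho_K$ with $|y| > |p|$, I set $y_0 = y$ and iteratively lift via Frobenius surjectivity: choose $y_{n+1} \in \gotho_K$ with $y_{n+1}^p \equiv y_n \pmod{p}$. Since $|y_n| > |p|$ at every stage, the ultrametric inequality forces $|y_{n+1}|^p = |y_n|$, so the reductions $\overline y_n$ assemble into an element of $\gotho_{K'}$ whose norm is $|y_0|^{p^0} = |y|$ (via the formula in Definition~\ref{D:norm field}). This proves $|K^\times| \cap (|p|, \infty) \subseteq |(K')^\times|$, and since $|(K')^\times|$ is a group, it also contains $|K^\times| \cap (0, |p|^{-1})$ by taking reciprocals. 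Because $|p| < 1 < |p|^{-1}$, these two intervals already cover $\mathbb R_{>0}$, so $|K^\times| \subseteq |(K')^\times|$.

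For (b), surjectivity of the projection $\overline x \mapsto \overline x_0$ is again immediate: starting from $\overline y_0 \in \gotho_K/(p)$, Frobenius surjectivity lets me choose $\overline y_{n+1}$ with $\overline y_{n+1}^p = \overline y_n$, assembling to a preimage in $\gotho_{K'}$. To compute the kernel, fix $\overline z$ from (a) with $|\overline z|' = |p|$. The congruence $\theta(\overline x) \equiv x_0 \pmod{p}$ from Lemma~\ref{L:Teichmuller1} shows $\overline x_0 = 0$ iff $|\theta(\overline x)| \leq |p|$, i.e.\ $|\overline x|' \leq |\overline z|'$; by Lemma~\ref{L:valuation ring} this is exactly $\overline x \in (\overline z)$. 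Quotienting the resulting isomorphism $\gotho_{K'}/(\overline z) \cong \gotho_K/(p)$ by the maximal ideal on both sides then gives $\kappa_{K'} \cong \kappa_K$.

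For (c), note that $\Theta \bmod p$ is by construction the map $\overline\theta$ from Definition~\ref{D:norm field}, which is surjective by (b). I then finish by the standard $p$-adic bootstrap: given $y \in \gotho_K$, pick $x_0 \in W(\gotho_{K'})$ with $\Theta(x_0) \equiv y \pmod{p}$, set $y_1 = (y - \Theta(x_0))/p \in \gotho_K$, and iterate. The series $\sum_n p^n x_n$ converges in the $p$-adically complete strict $p$-ring $W(\gotho_{K'})$ to some $x$, and by $p$-adic continuity of $\Theta$ we get $\Theta(x) = y$ (noting that $p^n \gotho_K = \{x \in \gotho_K : |x| \leq |p|^n\}$, so the $p$-adic and norm topologies on $\gotho_K$ coincide, whence $\gotho_K$ is $p$-adically complete). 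The most subtle point in the whole argument is the value-group bookkeeping in (a): one need not prove $p$-divisibility of $|K^\times|$ at all, because the intervals $(|p|, \infty)$ and $(0, |p|^{-1})$ already overlap and together cover $\mathbb R_{>0}$.
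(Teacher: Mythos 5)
Your parts (b) and (c) are correct and essentially match the (unwritten) details that the paper glosses over with ``(b) and (c) follow easily''; the kernel computation in (b) via $\theta(\overline{x}) \equiv x_0 \pmod p$ and Lemma~\ref{L:valuation ring}, and the $p$-adic bootstrap in (c) using $p$-adic completeness of $\gotho_K$, are both sound.

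Part (a), however, has a genuine gap, and it is located exactly at the point that matters. Your Frobenius-lifting argument requires $y \in \gotho_K$ (you need to reduce mod $p$ and appeal to surjectivity of Frobenius on $\gotho_K/(p)$), so it establishes $\left|K^\times\right| \cap (\left|p\right|, 1] \subseteq \left|(K')^\times\right|$, not $\left|K^\times\right| \cap (\left|p\right|, \infty) \subseteq \left|(K')^\times\right|$ as you assert. Taking reciprocals of $(\left|p\right|,1]$ then yields $[1, \left|p\right|^{-1})$, and the union is $(\left|p\right|, \left|p\right|^{-1})$ --- not $\mathbb{R}_{>0}$. Crucially, the single boundary value $\left|p\right| = p^{-1}$ is precisely what is \emph{not} covered, and $p^{-1} \in \left|(K')^\times\right|$ is both the heart of the statement and the fact that part (b) relies on (``which exists by (a)''). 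A symptom of the gap: your argument never invokes the hypothesis that $K$ is not discretely valued, yet without it (a) is false --- for $K = \Qp$ your argument gives $\left|(K')^\times\right| \supseteq \{1\}$, which is vacuous while $\left|K^\times\right| = p^{\ZZ}$. The paper's proof, and the needed fix, uses the non-discreteness to produce some $d \in \left|K^\times\right| \cap (\left|p\right|, 1)$; then $\left|p\right|/d$ also lies in $(\left|p\right|,1) \cap \left|K^\times\right|$, so both $d$ and $\left|p\right|/d$ lie in $\left|(K')^\times\right|$ by the part of your argument that is correct, whence $\left|p\right| \in \left|(K')^\times\right|$. Once $\left|p\right| \in \left|(K')^\times\right|$, the group structure lets you shift any $c \in \left|K^\times\right| \cap (0,1]$ by a power of $\left|p\right|$ into $(\left|p\right|,1]$ and conclude.
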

\begin{proof}
Since $K$ is not discretely valued, we can find $r \in \left|K^\times\right|$ such that $p^{-1} r^p \in (p^{-1},1)$;
the surjectivity of Frobenius then implies $p^{-1/p} r \in \left|K^\times\right|$. This implies
$p^{-1/p} \in \left|K^\times\right|$, so $p^{-1/p} \in \left|(K')^\times\right|$ and $p^{-1} \in \left|(K')^\times\right|$.
Since also $\left|K^\times\right| \cap (p^{-1},1) = \left|(K')^\times\right| \cap (p^{-1},1)$, we obtain (a), and (b) and (c)
follow easily.
\end{proof}

\begin{cor}
The perfect norm field functor
on perfectoid analytic fields is faithful. 
\end{cor}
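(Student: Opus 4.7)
Suppose $f, g \colon K_1 \to K_2$ are two morphisms of perfectoid analytic fields with the same image $f' = g' \colon K_1' \to K_2'$ under the perfect norm field functor; the goal is to deduce $f = g$. Since any morphism of analytic fields is isometric and $K_i = \gotho_{K_i}[p^{-1}]$, it suffices to check equality of the restrictions $f, g \colon \gotho_{K_1} \to \gotho_{K_2}$; moreover, because the norm and $p$-adic topologies on $\gotho_{K_i}$ coincide, these restrictions are $p$-adically continuous ring homomorphisms.

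The key tool is Lemma~\ref{L:perfectoid}(c), which asserts that $\Theta_1 \colon W(\gotho_{K_1'}) \to \gotho_{K_1}$ is surjective. It therefore suffices to show $f \circ \Theta_1 = g \circ \Theta_1$ as maps $W(\gotho_{K_1'}) \to \gotho_{K_2}$. For this I would establish the naturality of $\Theta$ in $K$: letting $W(f') \colon W(\gotho_{K_1'}) \to W(\gotho_{K_2'})$ be the functorial lift of $f'$ furnished by Theorem~\ref{T:Witt}, I claim
\[
f \circ \Theta_1 = \Theta_2 \circ W(f'),
\]
and similarly with $g$ in place of $f$. Granting this, the hypothesis $f' = g'$ forces $W(f') = W(g')$, whence $f \circ \Theta_1 = g \circ \Theta_1$, and the proof is complete.

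To verify the naturality identity, observe that both sides of the displayed equation are $p$-adically continuous ring homomorphisms from the strict $p$-ring $W(\gotho_{K_1'})$ to the $p$-adically complete ring $\gotho_{K_2}$. By the uniqueness clause of Lemma~\ref{L:Teichmuller2}, it then suffices to compare them after precomposing with the Teichm\"uller map $[\bullet] \colon \gotho_{K_1'} \to W(\gotho_{K_1'})$, and this reduces everything to the pointwise identity $f(\theta_1(\overline{x})) = \theta_2(f'(\overline{x}))$ in $\gotho_{K_2}$ for $\overline{x} \in \gotho_{K_1'}$. This last identity follows from the explicit formula $\theta_i(\overline{x}) \equiv x_n^{p^n} \pmod{p^{n+1}}$ from Lemma~\ref{L:Teichmuller1}, combined with the $p$-adic continuity of $f$ (to interchange $f$ with the $p$-adic limit $\lim_n x_n^{p^n}$). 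The only (mild) obstacle is this naturality verification; once it is in hand, the rest of the argument is a formal consequence of the universal characterization of $\Theta$ built into the earlier lemmas.
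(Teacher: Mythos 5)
Your proof is correct and amounts to carefully spelling out the argument the paper leaves implicit (the Corollary is stated without proof, immediately after Lemma~\ref{L:perfectoid}, whose part (c) supplies the crucial surjectivity). The route you take — reduce to $\gotho_{K_1}$, use surjectivity of $\Theta_1$, establish the naturality square $f \circ \Theta_1 = \Theta_2 \circ W(f')$ via the uniqueness clause of Lemma~\ref{L:Teichmuller2} and the limit description $\theta_i(\overline{x}) = \lim_n x_n^{p^n}$ — is exactly the intended one. One cosmetic remark: you could bypass $W$ entirely by noting that $f$ and $g$, being $p$-adically continuous ring maps $\gotho_{K_1}\to\gotho_{K_2}$ that agree on $\theta_1(\gotho_{K_1'})$ (again by the limit formula), must agree on $\sum_n p^n \theta_1(\overline{x}_n)$ and hence, by surjectivity of $\Theta_1$, on all of $\gotho_{K_1}$; but this is the same idea with the Witt-vector packaging unwrapped, not a genuinely different argument.
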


\begin{example} \label{exa:roots of unity}
Take $K$ to be the completion of $\Qp(\mu_{p^\infty})$ for the
unique extension of the $p$-adic norm, and fix a choice of a sequence $\{\zeta_{p^n}\}_{n=0}^\infty$
in which $\zeta_{p^n}$ is a primitive $p^n$-th root of unity and $\zeta_{p^{n+1}}^p = \zeta_{p^n}$.
The field $K$ is perfectoid because
\[
\gotho_K/(p) \cong \Fp[\overline{\zeta}_p, \overline{\zeta}_{p^2}, \dots]/(1 + \overline{\zeta}_p + \cdots +  \overline{\zeta}_p^{p-1}, \overline{\zeta}_p
- \overline{\zeta}_{p^2}^p,\dots).
\]
By the same calculation, we identify $K'$ with the completed perfect closure of $\Fp((\overline{\pi}))$
by identifying $\overline{\pi}$ with $(\overline{\zeta}_1 - 1, \overline{\zeta}_p - 1, \dots)$. This example underlies the theory
of $(\varphi, \Gamma)$-modules; see \S\ref{sec:phi-Gamma}.
\end{example}

\begin{example} \label{exa:Breuil-Kisin}
Let $F$ be a discretely valued analytic field of mixed characteristics (e.g., a finite extension of $\Qp$)
and choose a uniformizer $\pi$ of $F$. Choose a sequence $\pi_0, \pi_1, \dots$ of elements of an algebraic closure
of $F$ in which $\pi_0 = \pi$ and $\pi_{n+1}^p = \pi_n$ for $n \geq 0$. Take $K$ to be the completion of
$F(\pi_0, \pi_1, \dots)$; then 
$\gotho_K$ is the completion of $\gotho_F[\pi_1, \pi_2,\dots]$, so
\[
\gotho_K/(\pi) \cong \Fp[\overline{\pi}_1, \overline{\pi}_2, \dots]/(\overline{\pi}_1^p, \overline{\pi}_1 - \overline{\pi}_2^p, 
\dots).
\]
By Exercise~\ref{exercise:deeply ramified} below, $K$ is perfectoid. By the same calculation, we identify $K'$ with the completed perfect closure
of $\kappa_F((\overline{\pi}))$ by identifying $\overline{\pi}$ with $(\overline{\pi}_0, \overline{\pi}_1, \dots)$.
This example underlies the theory of \emph{Breuil-Kisin modules}, which are a good replacement for
$(\varphi, \Gamma)$-modules for the study of crystalline representations; see 
\cite[\S 11]{brinon-conrad}.
\end{example}

\begin{exercise} \label{exercise:deeply ramified}
Let $K$ be an analytic field of mixed characteristics which is not discretely valued.
\begin{enumerate}
\item[(a)]
Assume that there exists $\xi \in K$ with $p^{-1} \leq \left|\xi\right| < 1$ such that Frobenius is surjective on $\gotho_K/(\xi)$.
Then $K$ is perfectoid. (Hint: first imitate the
proof of Lemma~\ref{L:perfectoid}(a) to construct an element of $K$ with norm $\left| \xi \right|^{1/p}$, then construct $p$-th roots modulo successively higher powers of $\xi$.)
\item[(b)]
Suppose that there exists an ideal $I \subseteq \gothm_K$ such that the $I$-adic topology and the norm topology
on $\gotho_K$ coincide, and Frobenius is surjective on $\gotho_K/I$. Then $K$ is perfectoid.
(Note that $I$ need not be principal; for instance, take $I = \{x \in \gotho_K: \left|x\right| < p^{-1/2}\}$.
If $I$ is finitely generated, however, it is principal.)
\end{enumerate}
\end{exercise}

\begin{exercise}
An analytic field $K$ of mixed characteristics is perfectoid if and only if for every $x \in K$,
there exists $y \in K$ with $\left|y^p - x\right| \leq p^{-1} \left|x\right|$. As in the previous exercise, one can also replace
$p^{-1}$ by any constant value in the range $[p^{-1}, 1)$.
\end{exercise}

\begin{remark}
When developing the theory of norm fields, it is typical to consider the class of \emph{arithmetically profinite}
algebraic extensions of $\Qp$, i.e., those for which the Galois closure has the property that its higher ramification subgroups
are open. One then shows using Exercise~\ref{exercise:deeply ramified}
that the completions of such extensions are perfectoid.
While this construction has the useful feature of providing many examples of perfectoid fields, we will have
no further need for it here. (See however Remark~\ref{R:Tate surjective}.)
\end{remark}

\subsection{Inverting the perfect norm field functor}
\label{subsec:primitive}

So far, we have a functor from perfectoid analytic fields to perfect analytic fields of characteristic $p$.
In order to invert this functor, we must also keep track of the kernel of the map $\Theta$; this kernel turns out to 
contain elements which behave a bit like linear polynomials, in that they admit an analogue of the division
algorithm. This exploits an imperfect but useful analogy between strict $p$-rings and rings of formal power series,
in which $p$ plays the role of a series variable and the Teichm\"uller coordinates (Definition~\ref{D:Teichmuller})
play the role of coefficients. For a bit more on this analogy, see
Remark~\ref{R:Weierstrass}; for further discussion,
including a form of Weierstrass preparation in strict $p$-rings, see for instance
\cite{fargues-fontaine}.

\begin{hypothesis}
Throughout \S\ref{subsec:primitive}, let $F$ be an analytic field which is perfect of characteristic $p$. 
We denote the norm on $F$ by $\left|\bullet\right|'$.
\end{hypothesis}

\begin{remark} \label{R:norm property}
We will use frequently the following consequence of the
homogeneity aspect of Remark~\ref{R:addition formula}: the function
\[
\sum_{n=0}^\infty p^n [\overline{x}_n] \mapsto \sup_n \{\left|\overline{x}_n\right|'\}
\]
satisfies the strong triangle inequality, and hence defines a norm on $W(\gotho_F)$.
See Lemma~\ref{L:Gauss norm} for a related observation.
\end{remark}

\begin{defn} \label{D:primitive}
For $z \in W(\gotho_{F})$ with reduction $\overline{z} \in \gotho_{F}$, 
we say $z$ is \emph{primitive} if $\left|\overline{z}\right|' = p^{-1}$ and
$p^{-1}(z - [\overline{z}]) \in W(\gotho_F)^\times$. 
(The existence of such an element in particular forces the norm on $F$ not to be trivial.)
Since $W(\gotho_F)$ is a local ring by Remark~\ref{R:local ring}, for $z = \sum_{n=0}^\infty p^n [\overline{z}_n]$ it is equivalent to require that $\left|\overline{z}_0\right|' = p^{-1}$ and $\left|\overline{z}_1\right|' = 1$;
in particular, whether or not $z$ is primitive depends only on $z$ modulo $p^2$.
\end{defn}

\begin{exercise} \label{exer:primitive}
If $z \in W(\gotho_F)$ is primitive, then so is $uz$ for any $u \in W(\gotho_F)^\times$. (Hint: for $y = uz$, show that
$\left|\overline{y}_1 - \overline{u}_0 \overline{z}_1\right| < 1$ by working with Witt vectors modulo $p^2$.)
\end{exercise}

In order to state the division lemma for primitive elements, we need a slightly wider
class of elements of $W(\gotho_F)$ than the Teichm\"uller lifts.
\begin{defn}
An element $x = \sum_{n=0}^\infty p^n [\overline{x}_n] \in W(\gotho_F)$ is \emph{stable} if 
$\left|\overline{x}_n\right|' \leq \left|\overline{x}_0\right|'$ for all $n > 0$. Note that $0$ is stable under this definition.
\end{defn}

\begin{lemma} \label{L:stable product}
An element of $W(\gotho_F)$ is stable if 
and only if it equals a unit times a Teichm\"uller lift.
\end{lemma}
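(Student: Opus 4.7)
The plan is to prove both implications by direct manipulation of Teichm\"uller coordinates, leveraging two facts: (i) since $W(\gotho_F)$ is local (Remark~\ref{R:local ring}) with maximal ideal the preimage of $\gothm_F$, an element $u = \sum_{n=0}^\infty p^n [\overline{u}_n]$ is a unit if and only if $\left|\overline{u}_0\right|' = 1$; and (ii) multiplication by a Teichm\"uller lift acts \emph{exactly} termwise on coordinates, that is,
\[
[\overline{a}] \cdot \sum_{n=0}^\infty p^n [\overline{x}_n] = \sum_{n=0}^\infty p^n [\overline{a} \overline{x}_n].
\]
This identity follows immediately from multiplicativity of the Teichm\"uller map together with $p$-adic continuity of multiplication, and it will supply the uniqueness of Teichm\"uller coordinates needed to read off the coordinates of the product.

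For the ``if'' direction, suppose $x = u \cdot [\overline{t}]$ with $u \in W(\gotho_F)^\times$ and $\overline{t} \in \gotho_F$. By (i), $\left|\overline{u}_0\right|' = 1$, and by (ii) the Teichm\"uller coordinates of $x$ are $\overline{x}_n = \overline{u}_n \overline{t}$. Hence $\left|\overline{x}_0\right|' = \left|\overline{t}\right|'$ and $\left|\overline{x}_n\right|' = \left|\overline{u}_n\right|' \left|\overline{t}\right|' \leq \left|\overline{t}\right|' = \left|\overline{x}_0\right|'$ for all $n$, so $x$ is stable.

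For the ``only if'' direction, suppose $x = \sum_{n=0}^\infty p^n [\overline{x}_n]$ is stable. If $\overline{x}_0 = 0$, the stability inequality forces every $\overline{x}_n = 0$, so $x = 0 = 1 \cdot [0]$. Otherwise, applying Lemma~\ref{L:valuation ring} inside $\gotho_F$, for each $n$ there is a unique $\overline{y}_n \in \gotho_F$ with $\overline{x}_n = \overline{x}_0 \overline{y}_n$, and in particular $\overline{y}_0 = 1$. Set $u = \sum_{n=0}^\infty p^n [\overline{y}_n]$. Then by (ii), $[\overline{x}_0] \cdot u = \sum_{n=0}^\infty p^n [\overline{x}_0 \overline{y}_n] = x$, and since $\left|\overline{y}_0\right|' = 1$, $u$ is a unit by (i). Swapping the roles of the two factors yields the desired expression $x = u \cdot [\overline{x}_0]$.

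There is no real obstacle; the only point worth flagging is the termwise identity (ii), which must be invoked carefully because the analogous statement for multiplication by a general unit is only approximate (cf.\ the hint to Exercise~\ref{exer:primitive}). Once (ii) is in hand, both directions reduce to a single application of the divisibility criterion in the valuation ring $\gotho_F$.
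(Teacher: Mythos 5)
Your proof is correct and follows essentially the same approach as the paper, which simply cites the termwise identity $[\overline{x}] \sum_{n} p^n [\overline{y}_n] = \sum_n p^n [\overline{x}\,\overline{y}_n]$ and declares the lemma immediate; you have usefully spelled out the two directions, using in addition the (correct) criterion that $u \in W(\gotho_F)^\times$ iff $\left|\overline{u}_0\right|' = 1$. One small quibble: the divisibility fact you need in $\gotho_F$ (that $\left|\overline{x}_n\right|' \leq \left|\overline{x}_0\right|'$ implies $\overline{x}_0 \mid \overline{x}_n$) is just the defining property of the valuation ring of the analytic field $F$, rather than an application of Lemma~\ref{L:valuation ring}, whose content is to \emph{establish} that $\gotho_{K'}$ is a valuation ring in the tilting construction; citing it here is harmless but slightly misplaced.
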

\begin{proof}
This is immediate from the fact that $[\overline{x}] \sum_{n=0}^\infty p^n [\overline{y}_n]
= \sum_{n=0}^\infty p^n [\overline{x} \overline{y}_n]$.
\end{proof}

Here is the desired analogue of the division lemma, taken from \cite[Lemma~5.5]{kedlaya-witt}.
\begin{lemma} \label{L:theta kernel}
For any primitive $z \in W(\gotho_F)$,
every class in $W(\gotho_F)/(z)$ is represented by a stable element of $W(\gotho_F)$.
\end{lemma}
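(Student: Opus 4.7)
The plan is to exploit the primitivity of $z$ to perform a successive approximation, iteratively trading factors of $p$ for factors of $[\overline{z}_0]$ modulo $z$. Since $z$ is primitive (Definition~\ref{D:primitive}), I can write $z = [\overline{z}_0] + pu$ for some $u \in W(\gotho_F)^\times$, and hence
\[
p \;=\; u^{-1}(z - [\overline{z}_0]) \;\equiv\; -u^{-1}[\overline{z}_0] \pmod{z}.
\]
The payoff is that, modulo $z$, each factor of $p$ (of norm $1$ in the sense of Remark~\ref{R:norm property}) can be replaced by $-u^{-1}[\overline{z}_0]$ (of norm $p^{-1}$). Iterating this substitution damps the higher-index Teichm\"uller coordinates of $x$ by powers of $[\overline{z}_0]$, so the stable representative should emerge in the limit.

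To implement this, given $x = \sum_{n \ge 0} p^n[\overline{x}_n] \in W(\gotho_F)$, I will build the Teichm\"uller coordinates $\overline{y}_0, \overline{y}_1, \ldots \in \gotho_F$ of the desired stable representative by induction on $k$, maintaining that
(a)~$|\overline{y}_n|' \le M$ for $0 \le n \le k$, where $M$ is a bound chosen at the outset from the norms of the $\overline{x}_n$, and
(b)~the partial sum $y^{(k)} := \sum_{n=0}^k p^n[\overline{y}_n]$ satisfies $x \equiv y^{(k)} \pmod{(z, p^{k+1})}$.
The base case lifts the image of $x$ in $W(\gotho_F)/(z,p) \cong \gotho_F/(\overline{z}_0)$ to an element $\overline{y}_0 \in \gotho_F$ of norm $\le M$. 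For the inductive step, write $x - y^{(k)} = z\beta_k + p^{k+1}\gamma_k$; then the substitution $p^{k+1} \equiv (-u^{-1})^{k+1}[\overline{z}_0^{k+1}] \pmod{z}$ pins down the required congruence class of $\overline{y}_{k+1}$ modulo $(\overline{z}_0)$ in terms of the leading Teichm\"uller coordinate of $\gamma_k$, and the freedom in choosing a representative should allow one to enforce~(a).

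The partial sums $y^{(k)}$ are then $p$-adically Cauchy by construction, so the $p$-adic limit $y = \sum_{n \ge 0} p^n [\overline{y}_n] \in W(\gotho_F)$ exists; condition~(a) ensures $y$ is stable, while~(b) together with $p$-adic separatedness of $W(\gotho_F)/(z)$ gives $y \equiv x \pmod{z}$. The main obstacle will be the bookkeeping in the inductive step verifying~(a): one must control how the norm of $\gamma_k$ evolves as $k$ grows and argue that the accumulated $[\overline{z}_0]$-factors absorb the ``bad'' high-index Teichm\"uller coordinates of $x$ without forcing $|\overline{y}_{k+1}|' > M$. Making this precise---essentially, that trading one $p$ for one $[\overline{z}_0]$ is ``free'' in norm since $|[\overline{z}_0]|' = p^{-1}$---requires a quantitative version of the heuristic from the first paragraph.
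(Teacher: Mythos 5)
Your bottom-up construction — building the Teichm\"uller coordinates $\overline{y}_0, \overline{y}_1, \ldots$ one at a time starting from $x \bmod (z,p)$ — cannot work as stated, and the obstruction is not bookkeeping but a structural one. The class of $x$ in $W(\gotho_F)/(z,p) \cong \gotho_F/(\overline{z}_0)$ does \emph{not} determine the leading coordinate $\overline{y}_0$ of the stable representative. Take $x = p^N$ for large $N$: modulo $z = [\overline{z}_0] + pu$ one has $p \equiv -u^{-1}[\overline{z}_0]$, so the stable representative is $(-u^{-1})^N[\overline{z}_0^N]$, whose leading coordinate is a nonzero element of norm $p^{-N}$ (in particular, a multiple of $\overline{z}_0^N$). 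But the image of $p^N$ in $\gotho_F/(\overline{z}_0)$ is $0$, so your base case would most naturally set $\overline{y}_0 = 0$ — after which stability forces $\overline{y}_n = 0$ for all $n$, i.e., $y = 0$, which is \emph{not} congruent to $p^N$ modulo $z$. The freedom to lift $0 \in \gotho_F/(\overline{z}_0)$ to any multiple of $\overline{z}_0$ does not help unless you already know to lift to the specific element $(-\overline{u}_0^{-1})^N\overline{z}_0^N$, and detecting that requires examining all of $x$, not just $x \bmod (z,p)$. Relatedly, your invariant (a) is too weak: stability requires $|\overline{y}_n|' \le |\overline{y}_0|'$ for $n > 0$, not merely $|\overline{y}_n|' \le M$ for some $M$ chosen in advance from the $\overline{x}_n$ (for $x = p^N$ that $M$ would be $1$, which is vacuous). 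Finally, your scheme has no mechanism to detect the degenerate case $x \in (z)$, where the unique stable representative is $0$ (Lemma~\ref{L:same norm}).

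The paper's proof avoids all three difficulties by working top-down: rather than committing to coordinates of the answer, it iteratively replaces $x$ by $x - x_{l,1}z_1^{-1}z$ (where $x_{l,1}$ is the ``shifted'' element $\sum_n p^n[\overline{x}_{l,n+1}]$), which strictly damps the supremum of Teichm\"uller norms by a factor $p^{-1}$ whenever coordinate $0$ does not already dominate. Either coordinate $0$ eventually dominates strongly enough and one more iteration produces a stable element outright (no leading coordinate is ever ``guessed''), or the iterates collapse to $0$ in the $(p,[\overline{z}])$-adic topology, exhibiting $x$ as a multiple of $z$ so that $0$ is a stable representative. That dichotomy is exactly what your greedy scheme is missing; to repair it you would need a way to determine $\overline{y}_0$ from the whole of $x$, which in effect re-derives the paper's iteration.
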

\begin{proof}
Write $z = [\overline{z}] + p z_1$ with $z_1 \in W(\gotho_F)^\times$. 
Given $x \in W(\gotho_F)$, put $x_0 = x$.
Given $x_l = \sum_{n=0}^\infty p^n [\overline{x}_{l,n}]$ congruent to $x$ modulo $z$,
put $x_{l,1} = \sum_{n=0}^\infty p^n [\overline{x}_{l,n+1}]$
and $x_{l+1} = x_l - x_{l,1} z_1^{-1} z$, so that $x_{l+1}$ is also congruent to $x$ modulo $z$.

Suppose that for some $l$, we have
$\left|\overline{x}_{l,n}\right|' < p \left|\overline{x}_{l,0}\right|'$ for all $n>0$. 
By Remark~\ref{R:norm property} and the equality $x_{l+1} = [\overline{x}_{l,0}] - x_{l,1} z_1^{-1} [\overline{z}]$, 
we have $\left|\overline{x}_{l+1,n}\right|' \leq \left|\overline{x}_{l,0}\right|'$ 
for all $n \geq 0$. Also, $\overline{x}_{l+1,0}$ equals $\overline{x}_{l,0}$ plus something of lesser norm
(namely $\overline{z} \overline{x}_{l,1}$ times the reduction of $z_1^{-1}$), so
$\left|\overline{x}_{l+1,0}\right|' = \left|\overline{x}_{l,0}\right|'$. Hence
$x_{l+1}$ is a stable representative of the congruence class of $x$.

We may thus suppose that no such $l$ exists. By Remark~\ref{R:norm property} again,
$\sup_n \{\left|\overline{x}_{l+1,n}\right|\} \leq p^{-1} \sup_n \{\left|\overline{x}_{l,n}\right|\}$ 
for all $l$. The sum $y = \sum_{l=0}^\infty x_{l,1} z_1^{-1}$ thus converges 
for the $(p, [\overline{z}])$-adic topology on $W(\gotho_F)$ and satisfies
$x = yz$; that is, 0 is a stable representative of the congruence class of $x$.
\end{proof}

\begin{remark}
One might hope that one can always take the stable representative in Lemma~\ref{L:theta kernel}
to be a Teichm\"uller lift, but this is in general impossible unless the field $F$
is not only complete but also \emph{spherically complete}. This condition means that any decreasing
sequence of balls in $F$ has nonempty intersection; completeness only imposes this condition when
the radii of the balls tend to 0. For example, a completed algebraic closure of $\QQ_p$,
or of a power series field in characteristic $p$, is not spherically complete.
\end{remark}

\begin{lemma} \label{L:same norm}
Any stable element of $W(\gotho_F)$ divisible by a primitive element must equal $0$.
\end{lemma}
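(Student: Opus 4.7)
The plan is to argue by contradiction using the Gauss-type norm $|w| = \sup_n |\overline{w}_n|'$ on $W(\gotho_F)$ from Remark~\ref{R:norm property}. Suppose $x = yz$ is stable with $z$ primitive and $x \neq 0$; in particular $y \neq 0$. I will compute $|x|$ in two different ways and extract an impossible inequality.

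First I would nail down three elementary facts about this norm. Multiplication by a Teichm\"uller lift is \emph{exactly} multiplicative, since $[\overline{a}]\sum p^n[\overline{w}_n] = \sum p^n[\overline{a}\,\overline{w}_n]$ is already in Teichm\"uller form, so $|[\overline{a}]w| = |\overline{a}|'\,|w|$. Multiplication by $p$ preserves the norm, because it merely shifts the Teichm\"uller coordinates up by one and prepends a zero. Combining these observations with the strong triangle inequality applied to $uv = \sum_n p^n([\overline{v}_n]u)$ yields submultiplicativity $|uv| \leq |u|\,|v|$. As a consequence, any $u \in W(\gotho_F)^\times$ satisfies $|u| = 1$: $u, u^{-1} \in W(\gotho_F)$ gives $|u|, |u^{-1}| \leq 1$, while submultiplicativity applied to $1 = u u^{-1}$ forces $|u|\,|u^{-1}| \geq 1$.

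Now write $z = [\overline{z}] + p z_1$ with $z_1 \in W(\gotho_F)^\times$ and decompose $x = [\overline{z}]y + p(yz_1)$. The Teichm\"uller property gives $|[\overline{z}]y| = p^{-1}|y|$, while unit-norm combined with the identity $y = (yz_1)z_1^{-1}$ forces $|yz_1| \geq |y|$; together with submultiplicativity this yields $|yz_1| = |y|$, and hence $|p(yz_1)| = |y|$. Since $y \neq 0$ implies $|y| > p^{-1}|y|$, the two summands have distinct norms, so the strong triangle inequality produces $|x| = |y|$ exactly. On the other hand, stability of $x$ is precisely the statement that $|x| = |\overline{x}_0|'$, and reduction modulo $p$ gives $\overline{x}_0 = \overline{y}_0\,\overline{z}$, so $|x| = p^{-1}|\overline{y}_0|' \leq p^{-1}|y|$. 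Combining yields $|y| \leq p^{-1}|y|$, so $|y| = 0$ and $y = 0$, contradicting $x \neq 0$.

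The main obstacle I anticipate is pinning down that multiplication by a unit of $W(\gotho_F)$ is norm-\emph{preserving} (not merely norm-nonincreasing); this hinges on controlling $|u^{-1}|$ via $u \in W(\gotho_F)$ itself. Once that fact and the exact Teichm\"uller-multiplicativity are in hand, everything reduces to a one-line invocation of the equality case of the strong triangle inequality.
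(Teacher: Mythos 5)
Your proof is correct and takes essentially the same approach as the paper: both decompose $x = [\overline{z}]y + pz_1 y$, note that the $pz_1 y$ term dominates so that $|x| = |y|$ for the sup norm of Remark~\ref{R:norm property}, and combine this with $\overline{x}_0 = \overline{y}_0\,\overline{z}$ and stability to force $y = 0$. You simply make explicit (via $|u|=|u^{-1}|=1$ for units) the step that the paper compresses into ``because the term $pz_1y$ dominates.''
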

\begin{proof}
Suppose $x \in W(\gotho_F)$ is stable and is divisible by a
primitive element $z$.
Put $y = x/z$ and write $x = \sum_{n=0}^\infty p^n [\overline{x}_n]$, $y = \sum_{n=0}^\infty p^n [\overline{y}_n]$. 
Also write $z = [\overline{z}] + p z_1$ with $z_1 \in W(\gotho_F)^\times$.
Then on one hand, $\overline{x} = \overline{y}_0 \overline{z}$,
so $\left|\overline{y}_0\right|' = p \left|\overline{x}_0\right|'$.
On the other hand, by writing
\[
x = p z_1 y + [\overline{z}] y
\]
and using Remark~\ref{R:norm property}, we see that
(because the term $pz_1 y$ dominates)
\[
\sup_n \{\left|\overline{x}_n\right|\} = \sup_n \{ \left|\overline{y}_n\right| \}.
\]
Since $x$ is stable, this gives a contradiction unless $x=0$, as desired.
\end{proof}

\begin{cor} \label{C:same norm}
Suppose that $z \in W(\gotho_F)$ is primitive
and that $x,y \in W(\gotho_F)$ are stable and congruent modulo $z$.
Then the reductions of $x,y$ modulo $p$ have the same norm.
\end{cor}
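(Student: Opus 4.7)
The plan is to argue by contradiction, using Lemma~\ref{L:same norm} as the punchline. Assume toward a contradiction that $|\overline{x}_0|' \neq |\overline{y}_0|'$, and without loss of generality take $|\overline{x}_0|' > |\overline{y}_0|'$. Since $x \equiv y \pmod{z}$, the difference $x-y$ is divisible by the primitive element $z$; so if I can verify that $x-y$ is itself stable, then Lemma~\ref{L:same norm} will force $x-y = 0$, giving $\overline{x}_0 = \overline{y}_0$ and contradicting the strict inequality of norms.

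The main task is therefore to establish stability of $x-y$. For this I plan to use the norm on $W(\gotho_F)$ from Remark~\ref{R:norm property}, defined by $\|\sum_n p^n [\overline{x}_n]\| = \sup_n |\overline{x}_n|'$, which satisfies the strong triangle inequality. Since $x$ and $y$ are stable, the suprema defining their norms are already attained at the zeroth coordinate, so $\|x\| = |\overline{x}_0|'$ and $\|y\| = |\overline{y}_0|'$; the ultrametric inequality then yields $\|x-y\| \leq \max(\|x\|, \|y\|) = |\overline{x}_0|'$.

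On the other hand, reduction modulo $p$ is a ring homomorphism on $W(\gotho_F)$, so the zeroth Teichm\"uller coordinate of $x-y$ is $\overline{x}_0 - \overline{y}_0$, and the strict inequality $|\overline{x}_0|' > |\overline{y}_0|'$ forces $|\overline{x}_0 - \overline{y}_0|' = |\overline{x}_0|'$. Hence the supremum defining $\|x-y\|$ is already achieved at index $0$, which is precisely the stability condition for $x-y$. I do not anticipate any real obstacle here: the argument rests entirely on the strict form of the ultrametric inequality, which is packaged cleanly in Remark~\ref{R:norm property}. Once stability of $x-y$ is in hand, Lemma~\ref{L:same norm} immediately produces the contradiction and completes the proof.
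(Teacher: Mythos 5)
Your proof is correct and follows essentially the same route as the paper's: write $w = x-y$, use Remark~\ref{R:norm property} together with stability of $x,y$ and the strict norm inequality to show $w$ is nonzero and stable, and then invoke Lemma~\ref{L:same norm} to reach a contradiction. The only cosmetic difference is that you phrase the endgame as ``$w=0$, so the norms agree,'' whereas the paper notes directly that a nonzero stable element divisible by a primitive cannot exist; both are immediate consequences of the same lemma.
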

\begin{proof}
Put $w = x-y$ and write 
$w =  \sum_{n=0}^\infty p^n [\overline{w}_n]$,
$x = \sum_{n=0}^\infty p^n [\overline{x}_n]$,
$y = \sum_{n=0}^\infty p^n [\overline{y}_n]$.
By Remark~\ref{R:norm property}, $\left|\overline{w}_n\right|' \leq \max\{\left|\overline{x}_0\right|', \left|\overline{y}_0\right|'\}$
for all $n \geq 0$. However, if $\left|\overline{x}_0\right|' \neq \left|\overline{y}_0\right|'$,
then $\left|\overline{w}_0\right|' = \max\{\left|\overline{x}_0\right|', \left|\overline{y}_0\right|'\} > 0$,
so $w$ is a nonzero stable element of $W(\gotho_F)$ divisible by $z$. This contradicts
Lemma~\ref{L:same norm}, so we must have $\left|\overline{x}_0\right|' = \left|\overline{y}_0\right|'$ as desired.
\end{proof}

\begin{exercise}
Give another proof of Lemma~\ref{L:same norm} by formulating a theory of Newton polygons for elements of $W(\gotho_F)$.
\end{exercise}

\begin{remark} \label{R:Weierstrass}
A good way to understand the preceding discussion is to compare it to the theory of \emph{Weierstrass preparation}
for power series over a complete discrete valuation ring. For a concrete example, consider the ideal $(T-p)$ in the ring
$\Zp\llbracket T \rrbracket$. There is a natural map $\Zp \to \Zp\llbracket T \rrbracket/(T-p)$;
one may see that this map is injective by observing that no nonzero element of $\Zp$ can be divisible by $T-p$
(by analogy with Lemma~\ref{L:same norm}), and that it is surjective by observing that one can perform the
division algorithm on power series to reduce them modulo $T-p$ to elements of $\Zp$ (by analogy with
Lemma~\ref{L:theta kernel}). 

In the situation considered here, however, we do not start with a candidate for
the quotient ring $W(\gotho_F)/(z)$. Instead, we must be a bit more careful in order to read off the properties of the quotient directly
from the division algorithm.
\end{remark}

We are now ready to invert the perfect norm field functor.
\begin{theorem} \label{T:theta kernel}
Choose any primitive $z \in W(\gotho_{F})$ and put $\gotho_K = W(\gotho_F)/(z)$. 
For $x \in \gotho_K$, apply Lemma~\ref{L:theta kernel} to find a stable
element $y = \sum_{n=0}^\infty p^n[\overline{y}_n] \in W(\gotho_F)$ lifting $x$, then define $\left|x\right| = \left|\overline{y}_0\right|'$.
This is independent of the choice of $y$ thanks to Lemma~\ref{L:same norm}.
\begin{enumerate}
\item[(a)]
The function $\left|\bullet \right|$ is a multiplicative norm on $\gotho_K$ under which $\gotho_K$ is complete.
\item[(b)]
There is a natural (in $F$) isomorphism $\gotho_K/(p) \cong \gotho_F/(\overline{z})$.
\item[(c)]
The ring $\gotho_K$ is the valuation ring of an analytic field $K$ of mixed characteristics.
\item[(d)]
The field $K$ is perfectoid and there is a natural isomorphism $K' \cong F$.
\item[(e)]
The kernel of $\Theta: W(\gotho_{F}) \to \gotho_K$ is generated by $z$.
\end{enumerate}
\end{theorem}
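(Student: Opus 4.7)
The plan is to work through parts (a)--(e) in order, using the stable-representative description of classes in $\gotho_K = W(\gotho_F)/(z)$ (Lemma~\ref{L:theta kernel}) together with the identification of stable elements with units times Teichm\"uller lifts (Lemma~\ref{L:stable product}). The substance of (a) is multiplicativity, the ultrametric inequality, and completeness. Multiplicativity is routine: if $y = u[\overline{y}_0]$ and $y' = u'[\overline{y}'_0]$ are stable lifts, then $yy' = uu'[\overline{y}_0 \overline{y}'_0]$ is stable with leading Teichm\"uller coordinate $\overline{y}_0 \overline{y}'_0$, giving $\left|xx'\right| = \left|x\right|\left|x'\right|$. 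For the ultrametric inequality I would feed $w_0 := y + y'$ into the iteration in the proof of Lemma~\ref{L:theta kernel}: Remark~\ref{R:norm property} gives $\sup_n \left|\overline{w}_{0,n}\right|' \leq \max(\left|x\right|, \left|x'\right|)$, and the estimate used there shows this sup is non-increasing under iteration (and strictly contracts in the non-stable branch). Hence either the iteration yields a stable $w_\ell$ whose leading coordinate has norm at most $\max(\left|x\right|, \left|x'\right|)$, or it runs forever and $y + y' \in (z)$, so $x + x' = 0$.

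Completeness then reduces to showing that the norm topology on $\gotho_K$ coincides with the $p$-adic topology, since $\gotho_K$ inherits $p$-adic completeness from $W(\gotho_F)$. One step of the iteration starting from $p$ yields the stable representative $-z_1^{-1}[\overline{z}]$, whose leading coordinate has norm $p^{-1}$, so $\left|p\right| = p^{-1}$ and $p^n \gotho_K$ lies in the disk of radius $p^{-n}$. Conversely, if $\left|x\right| \leq p^{-n}$, then by Lemma~\ref{L:valuation ring} the leading coordinate of a stable lift factors as $\overline{z}^n \overline{c}$, and writing $[\overline{z}] = z - pz_1$ gives $[\overline{z}]^n \equiv (-pz_1)^n \pmod{z}$, so the lift lies in $p^n W(\gotho_F) + (z)$ and $x \in p^n \gotho_K$.

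Parts (b)--(d) fall out with less work. For (b), $\gotho_K/(p) = W(\gotho_F)/(z,p) = \gotho_F/(\overline{z})$ since $W(\gotho_F)/(p) = \gotho_F$. For (c), multiplicativity forces $\gotho_K$ to be a domain, and the valuation-ring property follows by lifting a Lemma~\ref{L:valuation ring} divisibility $\overline{y}_{x,0} = \overline{y}_{y,0} \overline{c}$ to the relation $y_x = y_y \cdot u_x u_y^{-1} [\overline{c}]$ in $W(\gotho_F)$; the mixed-characteristic claim then follows from $\left|p\right| = p^{-1}$ and $\kappa_K = \gotho_F/(\overline{z})$ having characteristic $p$. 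For (d), $K$ is perfectoid because it is non-discretely valued (inherited from $F$) and Frobenius is surjective on $\gotho_F/(\overline{z})$ (using perfection of $F$). To identify $K' \cong F$, consider the map $\gotho_F \to \gotho_{K'}$ sending $\overline{x} \mapsto (\overline{x}^{1/p^n} \bmod \overline{z})_{n \geq 0}$: injectivity follows by comparing norms, and surjectivity by using $(a+b)^p = a^p + b^p$ in characteristic $p$ to promote any compatible sequence of lifts $\tilde{x}_n$ to a Cauchy sequence $\tilde{x}_m^{p^m}$ in $\gotho_F$ whose limit is the required preimage.

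Finally, (e) is a uniqueness argument. By Lemma~\ref{L:Teichmuller2}, the canonical map $\Theta : W(\gotho_{K'}) \to \gotho_K$ is the unique ring homomorphism extending the multiplicative section $\theta : \gotho_{K'} \to \gotho_K$ of the projection $\gotho_K \to \gotho_K/(p)$. Under the identification $\gotho_{K'} \cong \gotho_F$ from (d), the quotient map $W(\gotho_F) \twoheadrightarrow \gotho_K$ restricts on Teichm\"uller lifts to exactly such a section, so it must coincide with $\Theta$, yielding $\ker(\Theta) = (z)$. I expect the main obstacle to be the bookkeeping in (a): both the triangle inequality and the equivalence of norm and $p$-adic topologies rest on tracking how sup-norms of Witt components and leading Teichm\"uller coordinates evolve through the division algorithm of Lemma~\ref{L:theta kernel}, and every later part depends on these norm-theoretic foundations.
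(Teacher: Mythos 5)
Your proposal is correct and follows the same skeleton as the paper's (very terse) proof, with the expected fleshing-out: multiplicativity via stable $=$ unit $\times$ Teichm\"uller, the ultrametric inequality by tracking the division algorithm of Lemma~\ref{L:theta kernel}, part (b) via $W(\gotho_F)/(p,[\overline{z}])$, part (c) by lifting divisibility through Lemma~\ref{L:stable product}, and part (e) via the uniqueness clause of Lemma~\ref{L:Teichmuller2}. The one place worth a sentence of care is your completeness reduction: $\gotho_K$ does not automatically ``inherit'' $p$-adic completeness from $W(\gotho_F)$ (a quotient by a non-closed ideal can fail to be separated), but your norm computation --- that the ball of radius $p^{-n}$ is exactly $p^n\gotho_K$, which in particular forces $\bigcap_n p^n\gotho_K = 0$ --- supplies separatedness, after which lifting a Cauchy sequence to $W(\gotho_F)$ finishes the argument.
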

\begin{proof}
Part (a) follows from the fact that the product of stable elements is stable (thanks to Remark~\ref{R:norm property}).
Part (b) follows by comparing both sides to $W(\gotho_F)/(p, [\overline{z}])$.
Part (c) follows from the fact (a consequence of Lemma~\ref{L:stable product})
that if $x = \sum_{n=0}^\infty p^n [\overline{x}_n]$,
$y = \sum_{n=0}^\infty p^n [\overline{y}_n]$ are stable and
$\left|\overline{x}_0\right|' \leq \left|\overline{y}_0\right|'$, then $x$ is divisible by $y$ in $W(\gotho_F)$.
Part (d) follows from (b) plus Theorem~\ref{L:perfectoid}.
Part (e) follows from the construction of $\left|\bullet\right|$, or more precisely from the fact that
every nonzero class in $W(\gotho_F)/(z)$ has a nonzero stable representative.
\end{proof}

\begin{cor} \label{C:theta kernel}
Let $K$ be a perfectoid analytic field. Then there exists a primitive element
$z$ in the kernel of $\Theta: W(\gotho_{K'}) \to \gotho_K$, so $\ker(\Theta)$ is principal generated by $z$
by Theorem~\ref{T:theta kernel}.
(Exercise~\ref{exer:primitive} then implies that conversely, any generator of $\ker(\Theta)$ is primitive.)
\end{cor}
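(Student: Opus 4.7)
The plan is to exhibit a primitive element $z \in \ker(\Theta)$; once this is done, the conclusion that $\ker(\Theta) = (z)$ follows easily. Indeed, by Lemma~\ref{L:theta kernel}, every element of $\ker(\Theta)$ is congruent modulo $(z)$ to a stable element $y = \sum_{n \geq 0} p^n [\overline{y}_n] \in W(\gotho_{K'})$, which must still satisfy $\Theta(y) = 0$. From the identity $\theta(\overline{y}_0) = -\sum_{n \geq 1} p^n \theta(\overline{y}_n)$ and the stability bound $\left|\overline{y}_n\right|' \leq \left|\overline{y}_0\right|'$, the right-hand side has norm at most $p^{-1}\left|\overline{y}_0\right|' = p^{-1}\left|\theta(\overline{y}_0)\right|$, forcing $\left|\theta(\overline{y}_0)\right| = 0$ and hence $y = 0$.

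To construct $z$, I would proceed as follows. By Lemma~\ref{L:perfectoid}(a), choose $\overline{z}_0 \in \gotho_{K'}$ with $\left|\overline{z}_0\right|' = p^{-1}$. Then $\theta(\overline{z}_0) \in \gotho_K$ has norm $p^{-1} = \left|p\right|$, so $u := -\theta(\overline{z}_0)/p$ is a unit of $\gotho_K$. Using the surjectivity of $\Theta$ from Lemma~\ref{L:perfectoid}(c), lift $u$ to some $w \in W(\gotho_{K'})$ with $\Theta(w) = u$, and set $z := [\overline{z}_0] + pw$. Then $\Theta(z) = \theta(\overline{z}_0) + pu = 0$. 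To check primitivity, write $w = \sum_{n \geq 0} p^n [\overline{w}_n]$; then modulo $p^2$ we have $z \equiv [\overline{z}_0] + p[\overline{w}_0]$, so the zeroth and first Teichm\"uller coordinates of $z$ are $\overline{z}_0$ and $\overline{w}_0$ respectively (by the mod-$p^2$ criterion in Definition~\ref{D:primitive}). The zeroth has norm $p^{-1}$ by construction. Reducing $\Theta(w) = u$ modulo $p$ yields $\theta(\overline{w}_0) \equiv u \pmod{p}$ in $\gotho_K$, and since $\left|u\right| = 1 > \left|p\right|$, the ultrametric inequality forces $\left|\theta(\overline{w}_0)\right| = 1$, i.e., $\left|\overline{w}_0\right|' = 1$. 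Thus $z$ is primitive.

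The main subtle point is the primitivity check: because addition in a strict $p$-ring is not termwise on Teichm\"uller coordinates, one must rely on the mod-$p^2$ characterization rather than try to compute the full Teichm\"uller expansion of $z$. As an alternative to the first paragraph, one could instead apply Theorem~\ref{T:theta kernel} with $F = K'$ to the primitive $z$ just built; the resulting valuation ring $W(\gotho_{K'})/(z)$ maps surjectively onto $\gotho_K$ via $\Theta$, and this surjection is an isomorphism because its reduction modulo $p$ is the isomorphism of Lemma~\ref{L:perfectoid}(b) and both sides are $p$-adically separated and $p$-torsion free.
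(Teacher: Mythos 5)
Your construction of the primitive element $z = [\overline{z}_0] + pw$ (lifting a unit $-\theta(\overline{z}_0)/p$ via surjectivity of $\Theta$ to get the coefficient of $p$) is exactly the paper's argument, with the primitivity check phrased equivalently via the norm conditions on the first two Teichm\"uller coordinates rather than the unit condition on $p^{-1}(z-[\overline{z}])$. Your first paragraph and the closing alternative both supply a direct justification that $\ker(\Theta)=(z)$, which the paper compresses into a citation of Theorem~\ref{T:theta kernel}; this is a harmless and correct elaboration rather than a genuinely different route.
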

\begin{proof}
Since $K$ and $K'$ have the same norm group by Lemma~\ref{L:perfectoid}, we can find $\overline{z} \in \gotho_{K'}$ with $\left|\overline{z}\right|' = p^{-1}$.
Then $\theta(\overline{z})$ is divisible by $p$ in $\gotho_K$; since $\Theta$ is surjective, we can find $z_1 \in W(\gotho_{K'})$
with $\Theta(z_1) = -\theta(\overline{z})/p$. This forces $z_1 \in W(\gotho_{K'})^\times$, as otherwise
we would have $\left|\Theta(z_1)\right| < 1$. Now $z = [\overline{z}] + pz_1$ is a primitive element of $\ker(\Theta)$, as desired.
\end{proof}

\begin{example}
In Example~\ref{exa:roots of unity}, note that $\left|\overline{\pi}\right|' = p^{-p/(p-1)}$. One may then check that
\[
z = ([1+\overline{\pi}]-1)/([1+\overline{\pi}]^{1/p} - 1) = \sum_{i=0}^{p-1} [1+\overline{\pi}]^{i/p}
\]
is a primitive element of $W(\gotho_{K'})$ belonging to $\ker(\Theta)$. Hence $z$ generates the kernel
by Theorem~\ref{T:theta kernel}.
\end{example}

\begin{example}
One can also write down explicit primitive elements in some cases of Example~\ref{exa:Breuil-Kisin}.
A simple example is when $\pi = p$ (this forces the field $F$ to be absolutely unramified).
In this case,
\[
z = p  - [\overline{\pi}]
\]
is a primitive element of $W(\gotho_{K'})$ belonging to (and hence generating) $\ker(\Theta)$.
\end{example}

\subsection{Compatibility with finite extensions}

At this point, using Theorem~\ref{T:theta kernel} and Corollary~\ref{C:theta kernel}, we obtain the following statement,
which one might call the \emph{perfectoid correspondence}
(or the \emph{tilting correspondence} in the terminology of \cite{scholze1}).
\begin{theorem}[Perfectoid correspondence] \label{T:perfectoid correspondence}
The operations
\[
K \rightsquigarrow (K', \ker(\Theta: W(\gotho_{K'}) \to \gotho_K)),
\qquad
(F, I) \rightsquigarrow W(\gotho_F)[p^{-1}]/I
\]
define an equivalence of categories between perfectoid analytic fields $K$ and pairs $(F,I)$
in which $F$ is a perfect analytic field of characteristic $p$ and $I$ is an ideal of $W(\gotho_F)$
generated by a primitive element.
\end{theorem}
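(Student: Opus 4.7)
The plan is to bootstrap the categorical equivalence from the object-level content already in Theorem~\ref{T:theta kernel} and Corollary~\ref{C:theta kernel}, after fixing morphism conventions on both sides. On the left I take morphisms to be isometric ring embeddings $K_1 \to K_2$ of perfectoid analytic fields. On the right I take a morphism $(F_1, I_1) \to (F_2, I_2)$ to be an isometric embedding $\iota: F_1 \to F_2$ of perfect analytic fields of characteristic $p$ whose induced map $W(\gotho_{F_1}) \to W(\gotho_{F_2})$ (by functoriality of $W$, via Theorem~\ref{T:Witt}) carries $I_1$ into $I_2$.

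First I would verify that both operations extend to functors. Given $K_1 \to K_2$, restriction to valuation rings and mod-$p$ reduction commute with Frobenius, producing $\gotho_{K_1'} \to \gotho_{K_2'}$; the tilt norm $\left|\bullet\right|'$ is preserved because it is computed through $\theta$, which is natural in $K$. Applying $W$ and invoking the uniqueness clause of Lemma~\ref{L:Teichmuller2}, the square formed by the two copies of $\Theta$ commutes, so $\ker(\Theta_{K_1})$ is carried into $\ker(\Theta_{K_2})$. In the reverse direction, a morphism of pairs gives a ring map $W(\gotho_{F_1})/I_1 \to W(\gotho_{F_2})/I_2$; inverting $p$ yields a map of fields, and using the description of the norm in Theorem~\ref{T:theta kernel}(a) via stable lifts, this map is isometric (stable elements are preserved by $W(\iota)$, and the norm depends only on the leading Teichm\"uller coefficient, which is transported by $\iota$).

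At the object level, one direction of the equivalence is handled by Theorem~\ref{T:theta kernel}(d)--(e): starting from $(F, I)$ with $I=(z)$ primitive, forming $K = W(\gotho_F)[p^{-1}]/I$ and tilting back recovers $(F, I)$ up to natural isomorphism. The other direction is handled by Corollary~\ref{C:theta kernel}: given perfectoid $K$, the kernel of $\Theta$ is principal generated by a primitive element, and because $\Theta$ is surjective onto $\gotho_K$ by Lemma~\ref{L:perfectoid}(c), it descends to an isomorphism $W(\gotho_{K'})[p^{-1}]/\ker(\Theta) \cong K$. Naturality of both composites in $K$ and in $(F,I)$ follows from the naturality of $\Theta$.

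The main obstacle I expect is the bookkeeping at the morphism level, namely checking the bijection
\[
\Hom(K_1, K_2) \longleftrightarrow \Hom\bigl((K_1', \ker\Theta_1), (K_2', \ker\Theta_2)\bigr).
\]
Surjectivity of this correspondence is the delicate point: given a morphism of pairs, I must recover an isometric embedding $K_1 \to K_2$ and confirm it coincides with the one produced by the second functor. The argument relies on the fact that the two functors are mutually inverse at the object level together with the uniqueness in Theorem~\ref{T:Witt} (which forbids any discrepancy between the natural map and the map constructed by descent). Once the mutual-inverse compositions are exhibited as natural isomorphisms, full faithfulness on both sides is formal, completing the equivalence.
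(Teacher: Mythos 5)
Your proposal is correct and matches the approach the paper has in mind: the paper offers no proof of Theorem~\ref{T:perfectoid correspondence}, presenting it as an immediate consequence of Theorem~\ref{T:theta kernel} and Corollary~\ref{C:theta kernel}, and your argument supplies exactly the categorical bookkeeping (morphism conventions, functoriality via the uniqueness clauses of Lemma~\ref{L:Teichmuller2} and Theorem~\ref{T:Witt}, and naturality of the unit and counit through $\Theta$) that the reader is expected to fill in. One small point worth recording: for the right-hand functor, the condition that $W(\iota)$ carries $I_1$ into $I_2$ automatically forces $W(\iota)(I_1)$ to generate $I_2 W(\gotho_{F_2})$, since an isometric $\iota$ sends primitives to primitives and any primitive multiple of a primitive element differs from it by a unit (by Remark~\ref{R:local ring}), so the induced map on quotients is an embedding of fields as needed.
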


\begin{remark}
From Example~\ref{exa:roots of unity}
and Example~\ref{exa:Breuil-Kisin}, we see that one cannot drop the ideal $I$
in Theorem~\ref{T:theta kernel}: one can have nonisomorphic perfectoid analytic fields whose
perfect norm fields are isomorphic.
\end{remark}

We will establish that the perfectoid
correspondence is compatible with finite extensions of fields on both sides, where on the right side
we replace the ideal $I$ by its extension to the larger ring.
This will give Theorem~\ref{T:field of norms1} by taking $K$ to be the completion of $\Qp(\mu_{p^\infty})$.
However, the more general result for an arbitrary perfectoid $K$ is relevant for extending $p$-adic Hodge theory to a relative setting; see Remark~\ref{R:relative}.

The first step is to lift finite extensions from characteristic $p$; this turns out to be straightforward.

\begin{lemma} \label{L:lift finite}
Let $K$ be a perfectoid analytic field
and put $I = \ker(\Theta: W(\gotho_{K'}) \to \gotho_K)$.
Let $F$ be a finite extension of $K'$ and put $L = W(\gotho_F)[p^{-1}]/IW(\gotho_F)[p^{-1}]$.
Then $[L:K] = [F:K']$.
(Note that by Theorem~\ref{T:theta kernel}, $L$ is a perfectoid analytic field and we may identify $L'$ with $F$.)
\end{lemma}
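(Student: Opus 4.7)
The argument splits naturally into a construction step and a degree computation.

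For the construction, I begin by verifying that if $z \in W(\gotho_{K'})$ is a primitive generator of $I$, then $z$ remains primitive when regarded as an element of $W(\gotho_F)$. The norm condition $\left|\overline{z}\right|' = p^{-1}$ persists because the norm on $K'$ extends uniquely to the algebraic extension $F$ (Remark~\ref{R:extend analytic}), and the unit condition $p^{-1}(z - [\overline{z}]) \in W(\gotho_F)^\times$ holds because $W(\gotho_F)$ is local (Remark~\ref{R:local ring}) and the residue field map $\kappa_{K'} \hookrightarrow \kappa_F$ is injective, so a unit in the smaller Witt ring remains a unit in the larger one. Applying Theorem~\ref{T:theta kernel} to the pair $(F, zW(\gotho_F))$ then shows that $L$ is a perfectoid analytic field of mixed characteristics with $L' \cong F$ and $\gotho_L = W(\gotho_F)/(z)$; the natural inclusion $W(\gotho_{K'}) \hookrightarrow W(\gotho_F)$ descends modulo $(z)$ to an inclusion $K \hookrightarrow L$.

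To show $[L:K] = [F:K']$, I would reduce to the Galois case. Since $K'$ is perfect, $F/K'$ is separable, and its Galois closure $\widetilde F/K'$ is a finite Galois extension. Setting $\widetilde G = \Gal(\widetilde F/K')$ and $H = \Gal(\widetilde F/F)$, construct the corresponding perfectoid $\widetilde L$ from $\widetilde F$ by the above recipe. The ideal $z W(\gotho_{\widetilde F})$ is $\widetilde G$-stable, so $\widetilde G$ acts on $\widetilde L$ fixing $K$ pointwise, and this action is faithful because its tilt recovers the faithful action of $\widetilde G$ on $\widetilde L' = \widetilde F$ (by functoriality of the perfect norm field construction). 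If one can identify $\widetilde L^{\widetilde G} = K$ together with its $H$-analogue $\widetilde L^H = L$, then Artin's theorem gives $[\widetilde L:K] = |\widetilde G| = [\widetilde F:K']$ and the desired equality $[L:K] = [F:K']$ follows by passing to $H$-fixed points.

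The main obstacle is the identification $\widetilde L^{\widetilde G} = K$. To handle this, one would show that $\widetilde L^{\widetilde G}$ is a closed perfectoid subfield of $\widetilde L$ (Frobenius surjectivity on $\gotho_{\widetilde L^{\widetilde G}}/(p)$ descends because Frobenius commutes with the $\widetilde G$-action, and closedness follows since $\widetilde G$ acts by isometries), that its tilt coincides with $\widetilde F^{\widetilde G} = K'$, and that the associated kernel ideal is the same $I$; the perfectoid correspondence (Theorem~\ref{T:perfectoid correspondence}) then forces $\widetilde L^{\widetilde G} = K$. The technical crux is the commutation of tilting with $\widetilde G$-invariants, which amounts to controlling the first cohomology of $\widetilde G$ in the relevant filtration; a practical route is to use Lemma~\ref{L:theta kernel} to convert the descent question into a termwise statement about stable representatives in $W(\gotho_{\widetilde F})$, where the Teichm\"uller coordinate structure makes the $\widetilde G$-action transparent and invariants reduce to the corresponding statement for $\gotho_{\widetilde F}/(\overline{z})$ over $\gotho_{K'}/(\overline{z})$.
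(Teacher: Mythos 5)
Your reduction to the Galois closure $\widetilde F/K'$ and the use of Artin's lemma to extract the degree equality is the same strategy as the paper. However, the crux you flag — identifying $\widetilde L^{\widetilde G}$ with $K$ (and $\widetilde L^H$ with $L$) — is precisely where the proposal has a genuine gap, and the route you sketch for closing it does not work as stated.

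You propose to show that $\widetilde L^{\widetilde G}$ is perfectoid, that its tilt is $\widetilde F^{\widetilde G} = K'$, and that the associated kernel ideal is $I$, then invoke the perfectoid correspondence. But each of these three facts is, a priori, of the same order of difficulty as the lemma itself: showing that tilting commutes with taking $\widetilde G$-invariants, or even just that Frobenius remains surjective on $\gotho_{\widetilde L^{\widetilde G}}/(p)$, requires controlling the comparison $(\gotho_{\widetilde L})^{\widetilde G}/(p) \to (\gotho_{\widetilde L}/(p))^{\widetilde G}$ — a Galois cohomology statement on the integral level that is not covered by anything proved so far. Your suggestion to reduce to stable representatives via Lemma~\ref{L:theta kernel} does not resolve this: stable representatives are neither unique nor permuted nicely by $\widetilde G$ (there is no reason a $\widetilde G$-translate of a stable element is stable, nor that an $H$-invariant class admits an $H$-invariant stable lift), so "invariants reduce termwise" is not available.

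The argument the paper uses is much more direct and sidesteps all of this by working at the level of $W(\gotho_{\widetilde F})[p^{-1}]$ (crucially, with $p$ inverted, so that $|\widetilde G|$ is a unit) and applying the \emph{averaging} projector $\tfrac{1}{|H|}\sum_{h \in H} h$. Since fixed points commute with Witt vectors over perfect rings, $W(\gotho_{\widetilde F})[p^{-1}]^H = W(\gotho_{\widetilde F^H})[p^{-1}]$; the averaging map gives an $H$-equivariant retraction of $W(\gotho_{\widetilde F})[p^{-1}]$ onto this subring, and since the primitive element $z$ is $G$-invariant, the retraction descends to a projection $\widetilde L \to W(\gotho_{\widetilde F^H})[p^{-1}]/(z)$ identifying the target with $\widetilde L^H$. (Injectivity of the natural map $W(\gotho_{\widetilde F^H})[p^{-1}]/(z) \to \widetilde L$ follows from $z$ being a $G$-invariant nonzerodivisor, so that $z W(\gotho_{\widetilde F})[p^{-1}] \cap W(\gotho_{\widetilde F^H})[p^{-1}] = zW(\gotho_{\widetilde F^H})[p^{-1}]$.) This identifies $\widetilde L^G = K$ and $\widetilde L^{\Gal(\widetilde F/F)} = L$ without any appeal to the perfectoid correspondence, and the degree count you wrote down then goes through. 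You had the right scaffolding; the missing idea is the trace/averaging projector, which is the standard device for this kind of Galois descent once $p$ has been inverted.
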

\begin{proof}
Apply Corollary~\ref{C:theta kernel} to choose a primitive generator $z \in I$.
The extension $F/K'$ is separable because $K'$ is perfect; it thus has a 
Galois closure $\tilde{F}$. Put $\tilde{L} = W(\gotho_{\tilde{F}})[p^{-1}]/(z)$,
which is a perfectoid analytic field thanks to Theorem~\ref{T:theta kernel}.
For any subgroup $H$ of $G = \Gal(\tilde{F}/K')$,
averaging over $H$ defines a projection
\[
\tilde{L} = \frac{W(\gotho_{\tilde{F}})[p^{-1}]}{z W(\gotho_{\tilde{F}})[p^{-1}]}
\to \frac{W(\gotho_{\tilde{F}^H})[p^{-1}]}{z W(\gotho_{\tilde{F}})[p^{-1}] \cap W(\gotho_{\tilde{F}^H})[p^{-1}]}
= \frac{W(\gotho_{\tilde{F}^H})[p^{-1}]}{z W(\gotho_{\tilde{F}^H})[p^{-1}]},
\]
so the right side equals the fixed field $\tilde{L}^H$.

Put $\tilde{G} = \Gal(\tilde{F}/F)$. Apply the above analysis with $H = G$ 
and $H = \tilde{G}$; since $\tilde{F}^G = K'$ and $\tilde{F}^{\tilde{G}} = F$, 
we obtain $\tilde{L}^G = K$ and $\tilde{L}^{\tilde{G}} = L$.
By Artin's lemma, $\tilde{L}$ is a finite Galois extension of $\tilde{L}^H$ with Galois group $H$, so
\[
[L:K] = [\tilde{L}:K] / [\tilde{L}:L] = \#G/\#\tilde{G} = [\tilde{F}:K']/[\tilde{F}:F]
= [F:K'],
\]
as desired.
\end{proof}

We still need to check that a finite extension of a perfectoid analytic field is again perfectoid.
It suffices to study the case where the perfect norm field is algebraically closed.
It is this argument in particular that we learned from the course of Coleman mentioned in the introduction, and even wrote down on one previous occasion; see \cite[Theorem~7]{kedlaya-power}.
\begin{lemma} \label{L:alg closed case}
If $K$ is a perfectoid field and $K'$ is algebraically closed, then so is $K$.
\end{lemma}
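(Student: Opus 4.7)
I will show that every monic polynomial $P(T) = \sum_{i=0}^d a_i T^i \in \gotho_K[T]$ of degree $d \geq 1$ has a root in $K$, by constructing a Cauchy sequence $\{\alpha_n\}$ in $\gotho_K$ with $|P(\alpha_n)| \to 0$ whose limit (in the complete field $K$) is a root.

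\emph{Step 1 (initial approximate root).} Using Lemma~\ref{L:perfectoid}(b), I lift each coefficient $a_i$ to some $\bar{a}_i \in \gotho_{K'}$ with $\theta(\bar{a}_i) \equiv a_i \pmod{p}$, taking $\bar{a}_d = 1$. Since $K'$ is algebraically closed, the monic polynomial $\bar{P}(T) = \sum_i \bar{a}_i T^i \in \gotho_{K'}[T]$ has a root $\bar{\alpha} \in \gotho_{K'}$. Put $\alpha_0 = \theta(\bar{\alpha})$. By the multiplicativity of $\theta$,
\[
P(\alpha_0) = \sum_i a_i \theta(\bar{\alpha})^i \equiv \sum_i \theta(\bar{a}_i \bar{\alpha}^i) \pmod{p},
\]
and under the isomorphism $\gotho_{K'}/(\bar{z}) \cong \gotho_K/(p)$ (for any $\bar{z} \in \gotho_{K'}$ with $|\bar{z}|' = p^{-1}$), the right-hand side corresponds to $\bar{P}(\bar{\alpha}) = 0$. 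Hence $|P(\alpha_0)| \leq p^{-1}$.

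\emph{Step 2 (iterative improvement).} Suppose $\alpha \in \gotho_K$ has $\delta := |P(\alpha)| \in (0,1)$. Write $P(\alpha + T) = T^d + c_{d-1} T^{d-1} + \cdots + c_1 T + P(\alpha)$ with $c_i \in \gotho_K$. By Lemma~\ref{L:perfectoid}(a), $|K^\times| = |(K')^\times|$, and the latter is a divisible group since $K'$ is algebraically closed; so I can choose $\lambda \in K^\times$ whose norm equals the largest norm of any root of $P(\alpha + T)$ in the algebraic closure of $K$. Vieta's formulas then give $|c_i/\lambda^{d-i}| \leq 1$ for all $i$, so $Q(U) := P(\alpha + \lambda U)/\lambda^d$ is a monic polynomial in $\gotho_K[U]$. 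Applying Step~1 to $Q$ yields $\beta_0 \in \gotho_K$ with $|Q(\beta_0)| \leq p^{-1}$; then $\alpha' := \alpha + \lambda \beta_0$ satisfies $|P(\alpha')| \leq |\lambda|^d p^{-1}$ and $|\alpha' - \alpha| \leq |\lambda|$. When the Newton polygon of $P(\alpha+T)$ has only a single slope, $|\lambda|^d = \delta$ and we obtain a genuine improvement $|P(\alpha')| \leq \delta p^{-1}$; in general one first performs slope factorization (valid since $K$ is complete nonarchimedean) to reduce to a factor whose roots all have the smallest norm appearing, and applies the above argument to that factor. Iterating produces a Cauchy sequence $\{\alpha_n\}$ in $\gotho_K$ with $|P(\alpha_n)| \to 0$ and $|\alpha_{n+1} - \alpha_n| \to 0$, whose limit is a root of $P$.

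\emph{Main obstacle.} The delicate part is Step~2: the single-slope case is immediate, but handling a Newton polygon with several slopes requires either slope factorization or a more careful direct analysis to ensure the rescaling yields a uniform improvement at each iteration. The divisibility of $|K^\times|$, furnished by $K'$ being algebraically closed via Lemma~\ref{L:perfectoid}(a), is precisely what allows the rescaling by $\lambda$, and is the structural feature by which algebraic closure propagates from $K'$ to $K$; this is the content of Coleman's argument alluded to in the introduction.
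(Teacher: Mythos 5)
Your approach parallels the paper's in Step 1 (lift the coefficients to $\gotho_{K'}$, find a root there using that $K'$ is algebraically closed, project back to get an approximate root mod $p$), but diverges in the iterative Step 2, and the divergence is exactly where the gap lies.

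The paper rescales by an element $u$ with $\left|u\right|$ equal to the \emph{smallest} norm of a root of $Q(T) = P(T+x_n)$, namely $\left|u\right| = \min_j\{\left|Q_0/Q_j\right|^{1/j}\}$. This makes $Q_iu^i/Q_0 \in \gotho_K$ for all $i$, with at least one of these a unit; the resulting polynomial over $\gotho_{K'}$ is not monic, but a Newton polygon argument shows directly that it has a unit root, and lifting that root gives a gain of a full factor $p^{-1}$ in one step: $\left|P(x_{n+1})\right| \le p^{-1}\left|Q_0\right| = p^{-1}\left|P(x_n)\right|$. No case split on the number of slopes is needed. You instead rescale by $\lambda$ with $\left|\lambda\right|$ the \emph{largest} root norm so that $Q(U) = P(\alpha+\lambda U)/\lambda^d$ is monic. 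This is natural if you insist on reusing Step 1 verbatim, but it is the wrong scale: your bound $\left|P(\alpha')\right| \le p^{-1}\left|\lambda\right|^d$ is an improvement only when $\left|\lambda\right|^d < p\,\delta$, i.e.\ in the single-slope case.

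You recognize this and propose slope factorization as a fix, but you do not carry it out, and carrying it out is where the remaining work lives. Concretely, after factoring $P(\alpha+T) = P_1 P_2$ (via the henselian property of $\gotho_K$, which you correctly invoke), with $P_1$ monic of degree $d_1$ carrying the smallest slope $c$, you must apply your Step 1 to the rescaled $P_1$, get $\beta_0$ with $\left|P_1(c\beta_0)\right| \le p^{-1}\left|P_1(0)\right|$, and then verify that $\left|P_2(c\beta_0)\right| = \left|P_2(0)\right|$ (using that all roots of $P_2$ have norm strictly greater than $c \ge \left|c\beta_0\right|$) so that $\left|P(\alpha')\right| \le p^{-1}\left|P(\alpha)\right|$ and $\left|\alpha'-\alpha\right| \le c \le \left|P(\alpha)\right|^{1/d}$, giving a Cauchy sequence. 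None of this is in your write-up: you gesture at ``applies the above argument to that factor'' without the computation that the full polynomial, not just the factor, actually improves, nor the uniform rate needed for convergence. The argument is repairable along these lines, but as written it is incomplete precisely at the point you flag as the main obstacle; the paper's choice of rescaling by the \emph{smallest} root norm (at the cost of giving up monicity and using a unit-root lemma instead of Step 1) is the idea that makes the multi-slope difficulty disappear.
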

\begin{proof}
Let $P(T) \in \gotho_K[T]$ be an arbitrary monic polynomial of degree $d \geq 1$; it suffices to check that
$P(T)$ has a root in $\gotho_K$.
We will achieve this by exhibiting a sequence $x_0,x_1,\dots$ of elements of $\gotho_K$ such that
for all $n \geq 0$,  $\left|P(x_n)\right| \leq p^{-n}$ and $\left|x_{n+1} - x_n\right| \leq p^{-n/d}$.
This sequence will then have a limit $x \in \gotho_K$ which is a root of $P$.

To begin, take $x_0 = 0$. Given $x_n \in \gotho_K$ with $\left|P(x_n)\right| \leq p^{-n}$,
write $P(T + x_n) = \sum_i Q_i T^i$. 
If $Q_0 = 0$, we may take $x_{n+1} = x_n$, so assume hereafter that $Q_0 \neq 0$.
Put
\[
c = \min\{\left|Q_0/Q_j\right|^{1/j}: j > 0, Q_j \neq 0\};
\]
by taking $j=d$, we see that $c \leq \left|Q_0\right|^{1/d}$.
Also, since $K$ has the same norm group as $K'$ by
Theorem~\ref{T:theta kernel},
this norm group is divisible; we thus have $c = \left|u\right|$ for some $u \in \gotho_K$.
(Note that $c$ is defined so as to equal the smallest norm of a norm of $Q$ in $\overline{K}$, but this fact is not explicitly used in the proof.)

Apply Corollary~\ref{C:theta kernel} to construct a primitive element $z \in \ker(\Theta)$.
Put $\overline{R}_0 = 0$.
For each $i>0$, choose $\overline{R}_i \in \gotho_{K'}$ 
whose image in $\gotho_{K'}/(\overline{z}) \cong \gotho_K/(p)$ is the same as that of
$Q_i u^i/Q_0$. Define the (not necessarily monic) polynomial $\overline{R}(T) = \sum_i \overline{R}_i T^i \in \gotho_{K'}[T]$.
By construction, the largest slope in the Newton polygon of $\overline{R}$ is 0; by this observation
plus the fact that $K'$ is algebraically closed, it follows that $\overline{R}(T)$ has a root
$y' \in \gotho_{K'}^\times$. Choose $y \in \gotho_K^\times$ whose image in $\gotho_K/(p) \cong \gotho_{K'}/(\overline{z})$
is the same as that of $y'$, and take $x_{n+1} = x_n + u y$.
Then $\sum_i Q_i u^i y^i / Q_0 \equiv 0 \pmod{p}$, so
$\left|P(x_{n+1})\right| \leq p^{-1} \left|Q_0\right| \leq p^{-n-1}$
and $\left|x_{n+1} - x_n\right| = \left|u\right| \leq \left|Q_0\right|^{1/d} \leq p^{-n/d}$.
We thus obtain the desired sequence, proving the claim.
\end{proof}

\begin{remark}
The inertia subgroup of the Galois group of a finite extension of analytic fields is solvable;
see for instance \cite[Chapter~3]{kedlaya-course} and references therein. (The discretely valued case may be 
more familiar; for that, see also \cite[Chapter~IV]{serre-local-fields}.) Using this statement,
one can give a slightly simpler proof of Lemma~\ref{L:alg closed case} by considering only
cyclic extensions of prime degree. We chose not to proceed this way so as to make good on our promise to keep the proof of
Theorem~\ref{T:field of norms1} entirely free of ramification theory.
\end{remark}

We  are now ready to complete the proof
of Theorem~\ref{T:field of norms1}.

\begin{theorem} \label{T:field of norms general}
Let $K$ be a perfectoid analytic field. Then
every finite extension of $K$ is perfectoid, and the operation $L \rightsquigarrow L'$
defines a functorial correspondence between the finite extensions of $K$ and $K'$. In particular,
the absolute Galois groups of $K$ and $K'$ are homeomorphic.
\end{theorem}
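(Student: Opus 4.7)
The plan is to build a single ``top'' perfectoid field $M$ that will serve as the completion of an algebraic closure of $K$, read off the finite extensions of $K$ from those of $K'$ via the perfectoid correspondence, and then obtain the Galois-group statement by passage to inverse limits.

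First, let $\Omega$ be the completion of an algebraic closure $\overline{K'}$ of $K'$; by Remark~\ref{R:extend analytic}, $\Omega$ is algebraically closed, and it is visibly a perfect analytic field of characteristic $p$. Using Corollary~\ref{C:theta kernel}, pick a primitive $z \in W(\gotho_{K'})$ generating $\ker(\Theta)$; the same $z$ remains primitive when viewed in the larger ring $W(\gotho_\Omega)$. Set $M := W(\gotho_\Omega)[p^{-1}]/(z)$. By Theorem~\ref{T:theta kernel}, $M$ is a perfectoid analytic field with $M' = \Omega$, and by Lemma~\ref{L:alg closed case} $M$ is itself algebraically closed; the inclusion $W(\gotho_{K'}) \hookrightarrow W(\gotho_\Omega)$ induces an isometric embedding $K \hookrightarrow M$.

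For each finite $F/K'$ inside $\overline{K'}$, Lemma~\ref{L:lift finite} produces a perfectoid subfield $L_F := W(\gotho_F)[p^{-1}]/(z) \subset M$ with $L_F' = F$ and $[L_F : K] = [F : K']$; these are compatible under inclusion in $F$. I would then show that $L_\infty := \bigcup_F L_F$ is dense in $M$: given $x \in \gotho_M$, lift it to $\sum_n p^n [\overline{x}_n] \in W(\gotho_\Omega)$; since $\gotho_\Omega$ is the completion of $\bigcup_F \gotho_F$, each of the first $N{+}1$ Teichm\"uller coordinates can be approximated arbitrarily well by elements of some $\gotho_F$ (take $F$ to be the compositum of the finitely many finite extensions that appear), and this yields an approximation of $x$ inside a single $L_F$. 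Since $M$ is algebraically closed and $L_\infty$ is dense in it, Remark~\ref{R:extend analytic} implies $L_\infty$ is separably closed; as $K$ has characteristic $0$, $L_\infty$ is an algebraic closure of $K$ and $M$ is its completion.

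Finally, given any finite extension $L/K$, embed $L \subset L_\infty$; by finite generation $L \subset L_F$ for some $F$, and by enlarging we may take $F/K'$ Galois. The averaging argument from the proof of Lemma~\ref{L:lift finite} shows that $L_F/K$ is Galois with group $\Gal(F/K')$ and that $L_F^H = L_{F^H}$ for every subgroup $H \subset \Gal(F/K')$; thus $L = L_F^H = L_{F^H}$ for the appropriate $H$, hence is perfectoid with $L' = F^H$. This establishes the functorial bijection $L \leftrightarrow L'$ between finite extensions, and passing to inverse limits over Galois $F/K'$ (using Remark~\ref{R:complete fields} to identify $G_K$ with $\Gal(L_\infty/K)$) gives the homeomorphism $G_K \cong G_{K'}$. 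I expect the main obstacle to be the density step combined with the invocation of Lemma~\ref{L:alg closed case} --- that lemma is the only genuinely analytic input, and everything else is then formal Galois theory.
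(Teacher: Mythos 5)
Your proposal is correct and follows essentially the same route as the paper: construct the algebraically closed perfectoid $M$ over the completed algebraic closure $\Omega$ of $K'$ via Theorem~\ref{T:theta kernel} and Lemma~\ref{L:alg closed case}, lift finite Galois extensions of $K'$ to perfectoid finite Galois extensions of $K$ via Lemma~\ref{L:lift finite}, argue their union is dense in $M$ and hence algebraically closed via Remark~\ref{R:extend analytic}, and conclude by the perfectoid correspondence. You simply fill in more of the details (the explicit approximation in Teichm\"uller coordinates for density, and the $L_F^H = L_{F^H}$ bookkeeping via the averaging projection) that the paper's proof leaves implicit in the phrase ``the rest follows from Theorem~\ref{T:perfectoid correspondence}.''
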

\begin{proof}
Apply Corollary~\ref{C:theta kernel} to construct a primitive generator $z \in \ker(\Theta: W(\gotho_{K'}) \to \gotho_K)$.
Let $M'$ be the completion of an algebraic closure $\overline{K}'$ of $K'$; it is again
algebraically closed by Remark~\ref{R:extend analytic}.
By Theorem~\ref{T:perfectoid correspondence},
$M'$ arises as the perfect norm field of a perfectoid analytic field $M$,
which by Lemma~\ref{L:alg closed case} is also algebraically closed.

By Lemma~\ref{L:lift finite}, each finite Galois extension $L'$ of $K'$ within $M'$
is the perfect norm field of a finite Galois extension $L$ of $K$ within $M$ which is perfectoid.
The union $\tilde{L}$ of such fields $L$ is dense in $M$ because the union of the $L'$ is dense in $M'$.
By Remark~\ref{R:extend analytic}, $\tilde{L}$ is algebraically closed; that is, every finite
extension of $K$ is contained in a finite Galois extension which is perfectoid.
The rest follows from Theorem~\ref{T:perfectoid correspondence}.
\end{proof}

\begin{remark} \label{R:scholze}
The proof of Theorem~\ref{T:field of norms general} is a digested version of the one given in
\cite[Theorem~3.5.6]{kedlaya-liu1}.
A different proof has been given by Scholze \cite[Theorem~3.7]{scholze1}, in which the analysis of strict $p$-rings is supplanted
by use of a small amount of \emph{almost ring theory}, as introduced by Faltings and developed systematically
by Gabber and Ramero \cite{gabber-ramero}. However, these two approaches
resemble each other far more strongly than either one resembles the original arguments of Fontaine and Wintenberger.
\end{remark}

\begin{remark} \label{R:relative}
In both \cite{kedlaya-liu1} and \cite{scholze1}, Theorem~\ref{T:field of norms general} is generalized to a statement
relating the \'etale sites of certain nonarchimedean analytic spaces in characteristic 0 and characteristic $p$,
including an optimally general form of Faltings's \emph{almost purity theorem}. (For the flavor
of this result, see Theorem~\ref{T:formally unramified}.)
This is used as a basis
for relative $p$-adic Hodge theory in \cite{kedlaya-liu1, kedlaya-liu2} and \cite{scholze2}. Note that for this application,
it is crucial to have Theorem~\ref{T:field of norms general} and not just
Theorem~\ref{T:field of norms1}: one must use analytic spaces in the sense of Huber (\emph{adic spaces})
rather than rigid analytic spaces, which forces an encounter with general analytic fields. One must also deal with valuations of rank greater than 1 (i.e., whose value groups do not fit inside the real numbers), but this adds no essential difficulty.
\end{remark}

The following is taken from \cite[Proposition~3.5.9]{kedlaya-liu1}.
\begin{exercise}
Let $L/K$ be a finite extension of analytic fields such that $L$ is perfectoid. Prove that $K$ is also perfectoid. Hint: reduce to the Galois case, then produce a Galois-invariant primitive element.
\end{exercise}

\subsection{Some applications}

We describe now a couple of applications of Theorem~\ref{T:field of norms general},
in which one derives information in characteristic $0$ by exploiting Frobenius
as if one were working in positive characteristic.

\begin{defn}
An analytic field $K$ is \emph{deeply ramified} if for any finite extension $L$ of $K$, 
$\Omega_{\gotho_L/\gotho_K} = 0$; that is, the morphism $\Spec(\gotho_L) \to \Spec(\gotho_K)$ is formally unramified.
(Beware that this morphism is usually not of finite type if $K$ is not discretely valued.)
\end{defn}

\begin{theorem} \label{T:formally unramified}
Any perfectoid analytic field is deeply ramified.
\end{theorem}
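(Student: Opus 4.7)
The plan is to use Theorem~\ref{T:field of norms general} to transfer Frobenius surjectivity on $\gotho_L/(p)$ into a vanishing of $\Omega := \Omega_{\gotho_L/\gotho_K}$, closing with Nakayama's lemma. By Theorem~\ref{T:field of norms general}, any finite extension $L/K$ is itself perfectoid, so Frobenius is surjective on $\gotho_L/(p)$ by definition. Using the base-change identity $\Omega \otimes_{\gotho_L} \gotho_L/(p) \cong \Omega_{(\gotho_L/p)/(\gotho_K/p)}$, in characteristic $p$ every $\overline{x} \in \gotho_L/(p)$ may be written as $\overline{y}^p$, so $d\overline{x} = p\overline{y}^{p-1} d\overline{y} = 0$. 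Hence $\Omega_{(\gotho_L/p)/(\gotho_K/p)} = 0$, i.e., $p\Omega = \Omega$.

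To conclude $\Omega = 0$, I would use that $\gotho_L$ is local (Remark~\ref{R:local ring}) with $p \in \gothm_L$: then $p\Omega = \Omega$ forces $\gothm_L \Omega = \Omega$, and Nakayama's lemma gives $\Omega = 0$ as long as $\Omega$ is finitely generated over $\gotho_L$.

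The main obstacle is exactly this finite-generation requirement. Because $K$ has dense value group, $\gotho_L$ need not be finitely generated as a $\gotho_K$-module, and the generating family $\{d\alpha : \alpha \in \gotho_L\}$ for $\Omega$ is a priori uncountable. I would address this via the primitive element theorem to write $L = K(\xi)$ with $\xi \in \gotho_L$, then apply the conormal sequence
\[
\gotho_L \otimes_{\gotho_K[\xi]} \Omega_{\gotho_K[\xi]/\gotho_K} \to \Omega \to \Omega_{\gotho_L/\gotho_K[\xi]} \to 0.
\]
The leftmost term is cyclic, generated by $d\xi$; the delicate task is to verify that the cokernel $\Omega_{\gotho_L/\gotho_K[\xi]}$ vanishes, equivalently that the integral-closure map $\gotho_K[\xi] \hookrightarrow \gotho_L$ is formally unramified. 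A plausible attack uses the tilt correspondence: the compatible systems of $p$-power roots in $\gotho_{L'}$ (from perfectness) and the resulting Teichm\"uller lifts should force the extra elements of $\gotho_L \setminus \gotho_K[\xi]$ to contribute no new derivations, reducing $\Omega$ to the cyclic module $\gotho_L \cdot d\xi$ on which the Nakayama argument above closes the proof.
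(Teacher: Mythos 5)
Your first half agrees with the paper: both use Theorem~\ref{T:field of norms general} to deduce that $L$ is perfectoid, so that Frobenius surjectivity on $\gotho_L/(p)$ gives $\Omega := \Omega_{\gotho_L/\gotho_K} = p\Omega$. Where the two arguments diverge is in closing the loop to $\Omega = 0$, and this is exactly where your proposal has a genuine gap.

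You correctly identify that Nakayama's lemma cannot be applied directly because $\Omega$ need not be finitely generated over $\gotho_L$ (indeed $\gotho_L$ is not a finite $\gotho_K$-module when $K$ is not discretely valued). However, your proposed workaround does not close the gap, and in fact shifts the entire difficulty into an unproved claim. Writing $L = K(\xi)$ and using the conormal sequence reduces you to proving $\Omega_{\gotho_L/\gotho_K[\xi]} = 0$, i.e.\ that the integral closure $\gotho_K[\xi] \hookrightarrow \gotho_L$ is formally unramified. But that assertion is essentially as strong as the theorem being proved: $\gotho_K[\xi]$ is a finite $\gotho_K$-algebra, so if $\gotho_K[\xi] \hookrightarrow \gotho_L$ were known to be formally unramified then $\Omega$ would already be controlled. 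Your final sentence (``the tilting correspondence should force the extra elements to contribute no new derivations'') is a gesture, not an argument, and this is where the real content of the theorem lies.

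The paper avoids Nakayama entirely by instead exhibiting a single nonzero element $tu \in \gotho_K$ that annihilates $\Omega$. It chooses $x_1,\dots,x_n \in \gotho_L$ forming a $K$-basis of $L$ and $t \in \gotho_K \setminus \{0\}$ with $t\gotho_L \subseteq \sum \gotho_K x_i$ (possible by the usual trace-pairing bound), and $u \in \gotho_K \setminus \{0\}$ killing each $dx_i$ (possible since $\Omega_{L/K} = 0$, so each $dx_i$ is torsion). Then $tu\,dx = 0$ for every $x \in \gotho_L$, i.e.\ $tu\Omega = 0$. Combining this with $\Omega = p^n\Omega$ for all $n$ and choosing $n$ large enough that $p^n$ is divisible by $tu$ in $\gotho_K$ immediately gives $\Omega = 0$, with no finite generation needed. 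If you want to repair your argument, this bounded-annihilation step is the missing ingredient: it replaces the finiteness input to Nakayama with an explicit torsion bound that interacts directly with $\Omega = p^n\Omega$.
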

\begin{proof}
Let $K$ be a perfectoid field and let $L$ be a finite extension of $K$.
Choose $x_1,\dots,x_n \in \gotho_L$ which form a basis of $L$ over $K$;
we can then find $t \in \gotho_K - \{0\}$ such that $t\gotho_L \subseteq \gotho_K x_1 + \cdots + \gotho_K x_n$.
Since $L$ is a finite separable extension of $K$, $\Omega_{L/K} = 0$; consequently, we can choose $u \in \gotho_K - \{0\}$
so that $u\,dx_i$ vanishes in $\Omega_{\gotho_L/\gotho_K}$ for each $i$.
For any $x \in \gotho_L$, $t\,dx = d(tx)$ is a $\gotho_K$-linear combination of $x_1,\dots,x_n$, so
$tu\,dx = 0$.

On the other hand, $L$ is perfectoid by Theorem~\ref{T:field of norms general}.
Hence for any $x \in \gotho_L$, we can find $y \in \gotho_L$ for which $x \equiv y^p \pmod{p}$;
this implies that $\Omega_{\gotho_L/\gotho_K} = p \Omega_{\gotho_L/\gotho_K}$.
As a result, $\Omega_{\gotho_L/\gotho_K} = p^n \Omega_{\gotho_L/\gotho_K}$ for any positive integer $n$;
by choosing $n$ large enough that $tu$ is divisible by $p^n$, we deduce that $\Omega_{\gotho_L/\gotho_K} = 0$.
Hence $K$ is deeply ramified.
\end{proof}

\begin{remark}
Theorem~\ref{T:formally unramified} admits the following converse:
any analytic field of mixed characteristics which is deeply ramified is also perfectoid.
See \cite[Proposition~6.6.6]{gabber-ramero}.
\end{remark}

\begin{theorem} \label{T:Tate surjective}
Let $K$ be a perfectoid analytic field and let $L$ be a finite extension of $K$. Then
$\Trace: \gothm_L \to \gothm_K$ is surjective.
\end{theorem}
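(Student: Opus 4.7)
The plan is to reduce via the tilting correspondence to the analogous statement in characteristic $p$, where the compatibility of trace with Frobenius makes surjectivity nearly automatic. First, I would assume $L/K$ is Galois with group $G$: if $\tilde L$ denotes the Galois closure, then $\Trace_{\tilde L/K} = \Trace_{L/K} \circ \Trace_{\tilde L/L}$ and $|\Trace_{\tilde L/L}(x)| \leq |x|$, so $\Trace_{\tilde L/L}$ carries $\gothm_{\tilde L}$ into $\gothm_L$ and the Galois case implies the general one. Set $J = \Trace_{L/K}(\gotho_L)$, an ideal of $\gotho_K$. Since by Lemma~\ref{L:perfectoid}(a) the value group $|K^\times| = |(K')^\times|$ is $p$-divisible (hence dense in $(0,1]$), any $y \in \gothm_K$ factors as $y = ab$ with $a, b \in \gothm_K$; thus if $J \supseteq \gothm_K$, writing $b = \Trace(x_0)$ yields $y = \Trace(a x_0)$ with $a x_0 \in \gothm_L$. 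Using the valuation-ring structure of $\gotho_K$, the containment $J \supseteq \gothm_K$ reduces in turn to exhibiting, for each $r \in (p^{-1}, 1) \cap |K^\times|$, an element of $J$ of norm $r$.

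Next, I would establish the congruence $J + p\gotho_K \supseteq \gothm_K$; from this, given $w \in \gothm_K$ of norm $r > p^{-1}$, the decomposition $w = j + pt$ with $j \in J$, $t \in \gotho_K$ forces $|j| = r$ because $|pt| \leq p^{-1} < r$. To prove the congruence, choose by Corollary~\ref{C:theta kernel} a primitive $z \in \ker \Theta$ for $K$; by Theorem~\ref{T:perfectoid correspondence}, the same $z$ generates the kernel of $\Theta$ for $L$. Lemma~\ref{L:perfectoid}(b) then supplies compatible isomorphisms $\gotho_K/(p) \cong \gotho_{K'}/(\overline z)$ and $\gotho_L/(p) \cong \gotho_{L'}/(\overline z)$, and since $\Theta$ is a ring homomorphism intertwining the Galois actions (lifted Witt-functorially to $W(\gotho_{L'})$), a brief computation using that Teichm\"uller lifts commute with the Galois action yields, for $\tilde y = \sum_n p^n [\overline y_n] \in W(\gotho_{L'})$,
\[
\Trace_{L/K}(\Theta(\tilde y)) \;=\; \Theta\Bigl(\sum_{\sigma \in G} \sigma(\tilde y)\Bigr) \;\equiv\; \theta(\Trace_{L'/K'}(\overline y_0)) \pmod{p\gotho_K}.
\]
Under the tilting isomorphisms, this says that the mod-$p$ reduction of $\Trace_{L/K}$ coincides with the mod-$\overline z$ reduction of $\Trace_{L'/K'}$.

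The task is thereby reduced to the characteristic-$p$ statement $J' := \Trace_{L'/K'}(\gotho_{L'}) \supseteq \gothm_{K'}$. Because $K'$ is perfect, $L'/K'$ is finite separable, so $\Trace_{L'/K'}\colon L' \to K'$ is surjective and $J'$ is a nonzero ideal of $\gotho_{K'}$. The identity $(a+b)^p = a^p + b^p$ in characteristic $p$ makes $\Trace_{L'/K'}$ commute with Frobenius, and since both $L'$ and $K'$ are perfect, $J'$ is closed under extraction of $p$-th roots. For any nonzero $\overline y \in J'$ the iterates $\overline y^{1/p^n}$ lie in $J'$ with $|\overline y^{1/p^n}|' \to 1^-$, and the valuation-ring structure of $\gotho_{K'}$ forces $J' \supseteq \gothm_{K'}$, completing the argument.

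The delicate step is the tilting identity for the trace: the Teichm\"uller map is multiplicative but not additive, so one must pass through the ring homomorphism $\Theta$ and verify that the Witt-vector lift of the Galois action on $W(\gotho_{L'})$ is compatible with the original $G$-action on $\gotho_L$. Once this compatibility is in place, everything else is a routine manipulation of valuations and ideals.
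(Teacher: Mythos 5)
Your proof is correct, and it shares the same skeleton as the paper's argument (tilt to characteristic $p$, use inverse Frobenius on the trace to get surjectivity onto $\gothm_{K'}$, then lift back), but the final ``de-congruence'' step is handled in a genuinely different and arguably cleaner way. The paper invokes the codifferent bound coming from separability of $L/K$ in characteristic $0$ --- there is an $m \geq 0$ with $(p/t)^m\gothm_K \subseteq \Trace(\gothm_L)$ for a fixed $t$ with $p^{-1}<|t|<1$ --- and then performs a descending induction on $m$, using the mod-$p$ surjectivity at each step to replace $m$ by $m-1$. You instead exploit a norm gap: the congruence $\Trace(\gotho_L) + p\gotho_K \supseteq \gothm_K$, together with the observation that every element of $p\gotho_K$ has norm at most $p^{-1}$, immediately forces $J := \Trace(\gotho_L)$ to contain elements of every norm in $(p^{-1},1)\cap|K^\times|$; since $J$ is an ideal of the valuation ring $\gotho_K$ and the value group is dense near $1$ (Lemma~\ref{L:perfectoid}(a) plus perfectness of $K'$), this already gives $J \supseteq \gothm_K$, and a short factorization using density again upgrades this to $\Trace(\gothm_L) \supseteq \gothm_K$. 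This avoids the characteristic-$0$ almost-surjectivity input and the bootstrap entirely. Two smaller stylistic differences: you reduce to the Galois case up front (harmless, and it makes the Teichm\"uller computation for the trace compatibility transparent), whereas the paper works with a general finite separable extension throughout; and the paper formulates the characteristic-$p$ statement for $\Trace(\gothm_{L'})$ directly, while you work with $\Trace(\gotho_{L'})$ and recover the $\gothm_L$ version at the end. Both routes are sound.
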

\begin{proof}
By Theorem~\ref{T:field of norms general}, $L$ is also perfectoid. Let $K', L'$ be the perfect norm fields
of $K,L$. 
Since $L'$ is a finite separable extension of $K'$, there exists $\overline{u} \in \gothm_{K'}$
such that $\overline{u} \gotho_{K'} \subseteq \Trace(\gothm_{L'})$. By applying the inverse
of Frobenius, we obtain the same conclusion with $\overline{u}$ replaced by 
$\overline{u}^{p^{-n}}$ for each positive integer $n$. Hence 
$\Trace: \gothm_{L'} \to \gothm_{K'}$ is surjective.

Since $K$ is not discretely valued,
we can find $t \in \gothm_K$ with $p^{-1} < \left|t\right| < 1$.
Since $L$ is a finite separable extension of $K$, there exists a nonnegative integer $m$ such that
$(p/t)^m \gothm_K \subseteq \Trace(\gothm_L)$.
If $m>0$, then for each $x \in (p/t)^{m-1} \gothm_K$, by the previous paragraph we may write
$x = \Trace(y) + pz$ for some $y \in \gothm_L$, $z \in \gotho_K$; since
$pz = (p/t)(tz) \in (p/t)^m \gothm_K$, it follows that $z \in \Trace(\gothm_L)$ and hence $x \in \Trace(\gothm_L)$.
In other words, we may replace $m$ by $m-1$; this proves the desired result.
\end{proof}

\begin{remark} \label{R:Tate surjective}
Note that Theorem~\ref{T:Tate surjective} still holds, with the same proof, if we replace the perfectoid field
$K$ by a dense subfield as long as the norm extends uniquely to the finite extension $L$ of $K$ (i.e., $\gotho_K$ is \emph{henselian} in the sense of Lemma~\ref{L:henselian} below).
For instance, we may take $K$ to be an infinite algebraic extension of an analytic field $F$ of mixed characteristics.

One important case is when $F = \Qp$ and $K$ is a Galois extension whose Galois group contains $\Zp$ (e.g., any
$p$-adic Lie group). In this case, the perfectoid property can be checked using a study of higher ramification
groups; the technique was introduced by Tate and developed further by Sen \cite{sen-lie} (see also
\cite[\S 13]{brinon-conrad}).

However, no ramification theory is necessary in case
$K$ is a field for which the perfectoid property can be checked directly
(as in Example~\ref{exa:roots of unity} or Example~\ref{exa:Breuil-Kisin}), or a finite extension of such 
a field (using Theorem~\ref{T:field of norms general}).
\end{remark}

\subsection{Gauss norms}
\label{subsec:Gauss}

We conclude this section by appending some more observations about norms on strict $p$-rings,
for use in  the second half of the paper.
\begin{hypothesis}
Throughout \S\ref{subsec:Gauss}, again let $F$ be an analytic field which is perfect of characteristic $p$,
with norm $\left|\bullet \right|'$.
\end{hypothesis}

The key construction here is an analogue of the Gauss norm, following \cite[Lemma~4.1]{kedlaya-witt}.

\begin{lemma} \label{L:Gauss norm}
For $r > 0$, the formula
\begin{equation} \label{eq:tilted norm}
\left| \sum_{n=0}^\infty p^n [\overline{x}_n] \right|_r =
\sup_n \{p^{-n} (\left|\overline{x}_n\right|')^r \}
\end{equation}
defines a function $\left|\bullet \right|_r: W(F) \to [0, +\infty]$ satisfying the strong triangle inequality
$\left|x+y\right|_r \leq \max\{\left|x\right|_r, \left|y\right|_r\}$ and the multiplicativity property
$\left|xy\right|_r = \left|x\right|_r \left|y\right|_r$ for all $x,y \in W(F)$ (under the convention $0 \times +\infty = 0$).
In particular, the subset $W^{r-}(F)$ of $W(F)$ on which $\left|\bullet \right|_r$ is finite forms a ring
on which $\left| \bullet \right|_r$ defines a multiplicative nonarchimedean norm.
\end{lemma}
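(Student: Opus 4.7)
The verification splits into the strong triangle inequality, sub-multiplicativity, and (the main obstacle) the matching lower bound $|xy|_r \geq |x|_r |y|_r$; closure of $W^{r-}(F)$ under the ring operations follows immediately from these inequalities.

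For the strong triangle inequality, I would track the $p^{-n}$ weights through the Witt vector addition polynomials, refining the reasoning behind Remark~\ref{R:norm property}. Writing $\alpha_j = \overline{x}_j^{p^j}$, $\beta_j = \overline{y}_j^{p^j}$, $\gamma_n = \overline{z}_n^{p^n}$ for the standard Witt components of $x$, $y$, and $z = x+y$, one has $\gamma_n = \Sigma_n(\alpha, \beta)$ where $\Sigma_n$ is an integer polynomial isobaric of weight $p^n$ with respect to the assignment $w(\alpha_j) = w(\beta_j) = p^j$: every monomial $\prod_j \alpha_j^{e_j} \prod_k \beta_k^{f_k}$ satisfies $\sum_j e_j p^j + \sum_k f_k p^k = p^n$. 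With $T = \max(|x|_r, |y|_r)$ and the bounds $|\overline{x}_j|'^r \leq p^j T$ (and similarly for the $\overline{y}_k$), a monomial-by-monomial estimate yields
\[
p^{-n} |\overline{z}_n|'^r \leq T \cdot p^{-n + (\sum_j j e_j p^j + \sum_k k f_k p^k)/p^n},
\]
and the elementary combinatorial inequality $\sum_j j e_j p^j + \sum_k k f_k p^k \leq n p^n$ (since $e_j > 0$ forces $j \leq n$, so $\sum_j j e_j p^j \leq n \sum_j e_j p^j$, with equality attained at $e_n = 1$) gives $|x+y|_r \leq T$.

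Sub-multiplicativity comes directly from the Teichm\"uller identity $[\overline{a}] \cdot \sum_m p^m[\overline{y}_m] = \sum_m p^m[\overline{a}\overline{y}_m]$, which gives $|[\overline{a}] y|_r = |\overline{a}|'^r |y|_r$ exactly. Writing $xy = \sum_n p^n[\overline{x}_n] y$ as a $p$-adically convergent sum (so the $n$-th Teichm\"uller coordinate of $xy$ matches that of any partial sum truncated past index $n$), applying the strong triangle inequality to finite truncations yields $|xy|_r \leq \sup_n p^{-n}|\overline{x}_n|'^r |y|_r = |x|_r|y|_r$.

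The main obstacle is super-multiplicativity. Here I would use a leading-term argument: for each $\epsilon > 0$, pick the minimal $n_0, m_0$ with $p^{-n_0}|\overline{x}_{n_0}|'^r > (1-\epsilon)|x|_r$ and $p^{-m_0}|\overline{y}_{m_0}|'^r > (1-\epsilon)|y|_r$, so that $|\overline{x}_i|'^r \leq (1-\epsilon) p^i |x|_r$ for $i < n_0$ (and symmetrically for the $\overline{y}_j$). Expanding $xy$ as a double sum over pairs $(i,j)$ and tracking the $(n_0+m_0)$-th Teichm\"uller coordinate, the ``diagonal'' contribution $p^{n_0+m_0}[\overline{x}_{n_0}\overline{y}_{m_0}]$ has monomial norm $p^{-(n_0+m_0)}|\overline{x}_{n_0}|'^r|\overline{y}_{m_0}|'^r > (1-\epsilon)^2 |x|_r|y|_r$ by multiplicativity of $|{\cdot}|'$ on $F$ (this is where the field hypothesis on $F$ enters crucially), while every other pair $(i,j)$ with $i+j = n_0+m_0$ and every Witt-addition carry from lower levels contributes strictly less (via the strong triangle inequality already established, together with the isobaric structure of the Witt multiplication polynomials). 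This forces $p^{-(n_0+m_0)}|\overline{xy}_{n_0+m_0}|'^r \geq (1-\epsilon)^2 |x|_r|y|_r$; letting $\epsilon \to 0$ completes the proof. The delicate step requiring genuine care is the carry analysis ruling out cancellation of the leading term; everything else follows essentially formally from the isobaric combinatorics of Witt arithmetic.
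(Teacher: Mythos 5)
Your outline of the strong triangle inequality and sub-multiplicativity is sound and essentially matches the paper's (the isobaric structure of the addition polynomials is exactly what Remark~\ref{R:addition formula} records, and the identity $[\overline{a}] \cdot \sum_m p^m[\overline{y}_m] = \sum_m p^m[\overline{a}\,\overline{y}_m]$ with the triangle inequality gives sub-multiplicativity as you say).

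The super-multiplicativity step, however, has a genuine gap. You pick $n_0, m_0$ minimal with $p^{-n_0}(|\overline{x}_{n_0}|')^r > (1-\epsilon)|x|_r$ and $p^{-m_0}(|\overline{y}_{m_0}|')^r > (1-\epsilon)|y|_r$, and then claim the diagonal contribution $p^{-(n_0+m_0)}(|\overline{x}_{n_0}\overline{y}_{m_0}|')^r > (1-\epsilon)^2 |x|_r|y|_r$ dominates all competitors. But the off-diagonal pairs $(i,j)$ with $i + j = n_0 + m_0$ and, say, $i < n_0$ are only bounded by $p^{-i}(|\overline{x}_i|')^r \leq (1-\epsilon)|x|_r$ together with $p^{-j}(|\overline{y}_j|')^r \leq |y|_r$, giving a bound of $(1-\epsilon)|x|_r|y|_r$ on those monomials. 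Since $(1-\epsilon)^2 < 1-\epsilon$, the off-diagonal contributions can be \emph{larger} than your lower bound for the diagonal term, so the dominant-term principle for nonarchimedean norms does not apply and nothing prevents cancellation. The "carry analysis" you defer is exactly this problem and it cannot be repaired within the $\epsilon$-leading-term framework, because the near-maximum is never guaranteed to be strict or unique.

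The paper's route avoids this entirely: first reduce to finite sums $x = \sum_{m=0}^M p^m[\overline{x}_m]$, $y = \sum_{n=0}^N p^n[\overline{y}_n]$ by a continuity argument, then observe that for all but finitely many values of $r$ the quantities $p^{-m-n}(|\overline{x}_m\overline{y}_n|')^r$ are pairwise distinct. For such $r$, the strong triangle inequality (already proved) together with the standard dominant-term property of nonarchimedean norms gives $|xy|_r = \max_{m,n}\, p^{-m-n}(|\overline{x}_m\overline{y}_n|')^r = |x|_r|y|_r$ with no analysis of carries at all. Multiplicativity at the exceptional $r$ then follows because $|x|_r, |y|_r, |xy|_r$ are continuous in $r$. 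If you want to salvage your argument you should adopt this perturbation-in-$r$ device; it replaces the non-closing $\epsilon$ bookkeeping with a strict separation of the monomial contributions, which is what the dominant-term argument actually requires.
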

\begin{proof}
From Remark~\ref{R:addition formula}, it follows that $\left|[\overline{x}] + [\overline{y}]\right|_r \leq
\max\{\left|[\overline{x}]\right|_r, \left|[\overline{y}]\right|_r\}$ for all $\overline{x}, \overline{y} \in \gotho_F$
(as in Remark~\ref{R:norm property}).
It follows easily that for $x,y \in F$, $\left|x+y\right|_r \leq \max\{\left|x\right|_r, \left|y\right|_r\}$ 
and $\left|xy\right|_r \leq \left|x\right|_r \left|y\right|_r$.
To establish multiplicativity, by continuity it is enough to consider finite sums
$x = \sum_{m=0}^M p^m [\overline{x}_m], y = \sum_{n=0}^N p^N [\overline{y}_n]$. For all but finitely many
$r>0$, the quantities
\[
p^{-m-n} (\left|\overline{x}_m \overline{y}_n\right|')^r \qquad (m=0,\dots,M; n=0,\dots,N)
\]
are all distinct; for such $r$, the fact that $\left| \bullet \right|_r$ is a nonarchimedean norm (shown above) gives
\[
\left|xy\right|_r = \max\{p^{-m-n} (\left|\overline{x}_m \overline{y}_n\right|')^r:  m=0,\dots,M; n=0,\dots,N\}
= \left|x\right|_r \left|y\right|_r.
\]
Since $\left|x \right|_r, \left|y\right|_r, \left|xy\right|_r$ are all continuous functions of $r$, we may infer multiplicativity also in the exceptional cases.
\end{proof}

\begin{remark} \label{R:Hadamard}
For $s \in (0,r]$ and $x = \sum_{n=0}^\infty p^n [\overline{x}_n]$,
\begin{equation} \label{eq:convex1}
\left|x\right|_s = \sup_n \{p^{-n} (\left|\overline{x}_n\right|')^s \}
\leq \sup_n \{p^{-ns/r} (\left|\overline{x}_n\right|')^s \} = \left|x\right|_r^{s/r}.
\end{equation}
An even stronger statement is that for $x \in W(F)$ fixed,
the function $\log(\left|\bullet \right|_r): W(F) \to \mathbb{R} \cup \{+\infty\}$ is convex
(because it is the supremum of convex functions); this bears a certain formal resemblance
to the \emph{Hadamard three circles theorem} in complex analysis.
A related observation is that 
\begin{equation} \label{eq:limsup}
\begin{cases}
\lim_{r \to 0^+} \left| x \right|_r = 1 & (\overline{x}_0 \neq 0) \\
\limsup_{r \to 0^+} \left| x \right|_r \leq p^{-1} & (\overline{x}_0 = 0).
\end{cases}
\end{equation}
\end{remark}

\begin{lemma} \label{L:completeness}
For $r>0$, the ring $W^{r-}(F)$ is complete with respect to $\left| \bullet \right|_r$.
\end{lemma}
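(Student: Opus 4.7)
Given a Cauchy sequence $\{x^{(k)}\}$ in $W^{r-}(F)$, the plan is to build its limit by producing each Teichm\"uller coordinate of the limit from componentwise convergence in $F$. Cauchyness forces boundedness: there is $M$ with $|x^{(k)}|_r \leq M$ for all $k$, so by \eqref{eq:tilted norm} each Teichm\"uller coordinate satisfies $|\overline{x}_n^{(k)}|' \leq (Mp^n)^{1/r}$, confining it to a closed ball in the complete analytic field $F$.

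The key technical step is to show, by induction on $n$, that $(\overline{x}_n^{(k)})_k$ is Cauchy in $F$, with limit $\overline{a}_n$. For $n=0$ the reduction modulo $p$ on $W(F)$ is a ring homomorphism, so $\overline{(x^{(k)}-x^{(j)})}_0 = \overline{x}_0^{(k)}-\overline{x}_0^{(j)}$ and $(|\overline{x}_0^{(k)}-\overline{x}_0^{(j)}|')^r \leq |x^{(k)}-x^{(j)}|_r$ settles the base case. For the inductive step, I would pass to the shifted sequence
\[
z^{(k)} \;=\; p^{-n}\!\left(x^{(k)}-\sum_{m=0}^{n-1} p^m [\overline{x}_m^{(k)}]\right) \;=\; \sum_{m=0}^{\infty} p^m [\overline{x}_{m+n}^{(k)}],
\]
whose zeroth Teichm\"uller coordinate is $\overline{x}_n^{(k)}$; applying the base case to $\{z^{(k)}\}$ gives the desired Cauchyness once $\{z^{(k)}\}$ itself is shown to be Cauchy in $|\bullet|_r$. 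Since $|p^n w|_r = p^{-n}|w|_r$, writing $p^n(z^{(k)}-z^{(j)}) = (x^{(k)}-x^{(j)}) - \sum_{m<n} p^m([\overline{x}_m^{(k)}]-[\overline{x}_m^{(j)}])$ reduces the task to bounding $|[\overline{x}_m^{(k)}]-[\overline{x}_m^{(j)}]|_r$ for each $m<n$. For this I would invoke $[\overline{a}]-[\overline{b}] = \sum_k p^k[P_k(\overline{a},\overline{b})]$ from Remark~\ref{R:addition formula}: combined with the homogeneity and diagonal-vanishing of the $P_k$ and the bounded-ball confinement from the first step, this yields a ``continuity of the Teichm\"uller lift'' estimate asserting that $|[\overline{a}]-[\overline{b}]|_r$ is small when $|\overline{a}-\overline{b}|'$ is small and $\overline{a},\overline{b}$ are bounded. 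The inductive hypothesis then forces $|[\overline{x}_m^{(k)}]-[\overline{x}_m^{(j)}]|_r \to 0$, so $\{z^{(k)}\}$ is Cauchy.

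Once the $\overline{a}_n$ are constructed, set $x = \sum_n p^n[\overline{a}_n]$; the bounded Teichm\"uller coordinates give $x \in W^{r-}(F)$ with $|x|_r \leq M$. To conclude $x^{(k)} \to x$ in $|\bullet|_r$, I would pass to the limit inside the Cauchy estimate: fix $\epsilon>0$ and $K$ with $|x^{(k)}-x^{(j)}|_r \leq \epsilon$ for $j,k \geq K$; for each fixed $n$ we have $(|\overline{(x^{(k)}-x^{(j)})}_n|')^r \leq p^n \epsilon$, and letting $j\to\infty$ (using componentwise convergence and the continuity of the Witt-vector subtraction formula in Teichm\"uller coordinates) gives $(|\overline{(x^{(k)}-x)}_n|')^r \leq p^n \epsilon$, hence $|x^{(k)}-x|_r \leq \epsilon$. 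The main obstacle is the continuity-of-the-Teichm\"uller-lift estimate, which requires unwinding the fractional-exponent polynomial structure of the $P_k$ (each monomial $\overline{a}^{i/p^\ell}\overline{b}^{(p^\ell-i)/p^\ell}$ produces, upon the substitution $\overline{a}=\overline{b}+\overline{\delta}$, a correction whose $|\bullet|'$ is bounded by a power of $|\overline{\delta}|'$); once it is established, both the induction and the final passage to the limit proceed smoothly.
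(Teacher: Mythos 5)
Your proposal is correct and relies on the same essential mechanism as the paper's proof: controlling the differences of Teichm\"uller coordinates via the subtraction polynomials $P_n$ of Remark~\ref{R:addition formula}, using their homogeneity of degree one and vanishing on the diagonal. The only real difference is organizational. The paper derives a single explicit inequality (\eqref{eq:completeness}) directly bounding $\left|\overline{x}_n - \overline{y}_n\right|'$ in terms of $\left|x\right|_r$, $\left|y\right|_r$, and $\left|x-y\right|_r$, and then applies it twice --- once to show each coordinate sequence is Cauchy and once (after reducing to $x=0$) to show that the Cauchy tail bound propagates to $\left|x_i\right|_r$. You instead prove coordinatewise Cauchyness by induction on $n$, shifting by $p^{-n}$ and re-applying the $n=0$ case; the ``continuity of the Teichm\"uller lift'' estimate you invoke for the inductive step is precisely the same $P_n$ computation that underlies \eqref{eq:completeness}. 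The paper's route is slightly more economical since the one inequality serves both the construction of the limit and the verification of convergence, whereas your final passage to the limit requires a second, separately stated continuity assertion for subtraction in Teichm\"uller coordinates; but both that assertion and your shift-estimate are sound, so the argument goes through.
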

\begin{proof}
For $x = \sum_{n=0}^\infty p^n [\overline{x}_n]$,
$y = \sum_{n=0}^\infty p^n [\overline{y}_n]$ with 
$\left| x \right|, \left| y \right| \leq c$,
$\left| x-y \right| \leq \epsilon c$ for some $c>0$, $\epsilon \in [0,1)$, Remark~\ref{R:addition formula} implies that
\begin{equation} \label{eq:completeness}
\left| \overline{x}_n - \overline{y}_n \right|
\leq c p^{n/r} \max\{ p^{-m/r} \epsilon^{1/p^{m}}
 : m =0,\dots,n \}.
\end{equation}
Let $x_1, x_2,\dots$ be a Cauchy sequence in $W^{r-}(F)$, and write
$x_i = \sum_{n=0}^\infty p^n [\overline{x}_{i,n}]$.
Choose $c$ such that $\left| x_i \right|_r \leq c$ for all $i$.
Using \eqref{eq:completeness}, we see that for each $n$, the sequence $\overline{x}_{i,n}$
is Cauchy and hence converges to a limit $\overline{x}_n \in F$. Put $x = \sum_{n=0}^\infty p^n [\overline{x}_n]$; we see easily that 
$\left| x \right|_r \leq c$ and hence $x \in W^{r-}(F)$. It thus remains to check that $x$ is a limit of $x_1,x_2,\dots$; this formally reduces to the case $x=0$. 
For each $\delta > 0$, by definition there exists $N>0$ such that $\left| x_i - x_j \right|_r \leq c \delta$ for all $i,j \geq N$. For each $n$, by \eqref{eq:completeness}
we have
\[
\left| \overline{x}_{i,n} - \overline{x}_{j,n} \right| 
\leq c^{1/r} p^{n/r} \max\{ p^{-m/r} \delta^{1/(p^{m}r)}
 : m =0,\dots,n \};
\]
since the sequence $\overline{x}_{i,n}$ converges to 0 as $i \to \infty$, we must also have
\[
\left| \overline{x}_{i,n} \right| 
\leq c^{1/r} p^{n/r} \max\{ p^{-m/r} \delta^{1/(p^{m}r)}
 : m =0,\dots,n \}
\]
and so
\[
\left| x_i \right|_r
\leq c \max\{ p^{-m} \delta^{1/p^{m}}: m =0,1,\dots\}.
\]
As $\delta \to 0$, this upper bound tends to 0; this proves that $x_i \to 0$ as desired.
\end{proof}

\begin{lemma}\label{L:henselian}
The ring $W^\dagger(F) = \cup_{r>0} W^{r-}(F)$ is a local ring which is not complete
but is \emph{henselian}; that is, every finite \'etale $F$-algebra lifts uniquely
to a finite \'etale $W^\dagger(F)$-algebra.
\end{lemma}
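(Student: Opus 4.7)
The plan is to verify in sequence that $W^\dagger(F)$ is local, is not complete, and is henselian, with the last assertion being the main content.

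Locality I would obtain by showing that reduction modulo $p$ identifies a surjective ring homomorphism $W^\dagger(F) \to F$ whose kernel equals $pW^\dagger(F)$, and that every $x$ with nonzero reduction is a unit. The former uses the identity $|py|_r = p^{-1}|y|_r$ to see $pW(F) \cap W^{r-}(F) = pW^{r-}(F)$, so any $x$ lifting $0 \in F$ equals $p$ times an element of $W^\dagger(F)$. For the latter, write $x = [\overline{x}_0] + px'$ with $x' \in W^\dagger(F)$ and factor $x = [\overline{x}_0](1 + p[\overline{x}_0^{-1}]x')$, then invoke the convexity estimate~\eqref{eq:convex1} of Remark~\ref{R:Hadamard} to note that $|p[\overline{x}_0^{-1}]x'|_s < 1$ for $s > 0$ sufficiently small; the geometric series for $(1 + p[\overline{x}_0^{-1}]x')^{-1}$ then converges in $W^{s-}(F)$ by Lemma~\ref{L:completeness}, giving $x^{-1} \in W^\dagger(F)$.

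Non-completeness is easy: assuming the norm on $F$ is nontrivial (else there is nothing to say), pick $\overline{y} \in F$ with $|\overline{y}|' > 1$; then $\sum_{n=0}^\infty p^n [\overline{y}^{p^n}]$ lies in $W(F)$ but has $|\cdot|_r$-norm $\sup_n p^{-n}|\overline{y}|^{p^n r} = +\infty$ for every $r > 0$, and hence does not lie in $W^\dagger(F)$. So $W^\dagger(F)$ is properly contained in its $p$-adic completion, which is $W(F)$.

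For henselianness, rather than lifting \'etale algebras directly, I would prove the equivalent statement that $W^\dagger(F)$ satisfies $p$-adic Hensel's lemma: every coprime factorization $\overline{P} = \overline{Q}_1 \overline{Q}_2$ (with monic factors in $F[T]$) of the reduction of a monic $P(T) \in W^\dagger(F)[T]$ lifts uniquely to a factorization $P = Q_1 Q_2$ in $W^\dagger(F)[T]$ with monic $Q_i$. Uniqueness is inherited from the $p$-adically complete (hence henselian) ring $W(F)$. For existence, I would pick $r > 0$ with $P \in W^{r-}(F)[T]$, lift each $\overline{Q}_i$ coefficient-wise via Teichm\"uller to $Q_i^{(0)} \in W^{r-}(F)[T]$, lift a Bezout identity $\overline{u}\,\overline{Q}_1 + \overline{v}\,\overline{Q}_2 = 1$ to $uQ_1^{(0)} + vQ_2^{(0)} = 1 + pS$ in $W^{r-}(F)[T]$ by the same device, and run the standard Hensel iteration (at each step correcting $Q_i$ by $p^{n+1}$ times a Bezout-derived combination that requires inverting $1 + pS$). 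The main obstacle is that $(1 + pS)^{-1}$ need not exist in $W^{r-}(F)$ when $|pS|_r \geq 1$; however, by the convexity estimate~\eqref{eq:convex1} one can shrink to $s \in (0,r]$ with $|pS|_s < 1$, after which Lemma~\ref{L:completeness} supplies convergence of the iteration in $W^{s-}(F)[T] \subseteq W^\dagger(F)[T]$. Once $p$-adic Hensel is in hand, the equivalence between finite \'etale $F$-algebras and finite \'etale $W^\dagger(F)$-algebras follows by the standard reduction: decompose the $F$-algebra into finite separable field extensions, lift each via its primitive element's minimal polynomial with Teichm\"uller coefficients (essential surjectivity), and apply Hensel's lemma to lift homomorphisms (full faithfulness).
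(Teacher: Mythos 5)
Your plan is sound, but it diverges from the paper's proof at the henselian step and leaves a nontrivial estimate unverified, and one citation needs correcting. First, the inequality ``$\left|p[\overline{x}_0^{-1}]x'\right|_s < 1$ for small $s$'' (and the analogous ``$\left|pS\right|_s<1$'') does not follow from \eqref{eq:convex1} alone: if $\left|y\right|_r > 1$, then \eqref{eq:convex1} only gives $\left|y\right|_s \leq \left|y\right|_r^{s/r}$, which tends to $1$ from above and never drops below $1$. You need \eqref{eq:limsup}, which gives $\limsup_{s\to 0^+} \left|y\right|_s \leq p^{-1}$ precisely because the zeroth Teichm\"uller coordinate of a multiple of $p$ vanishes; this is the estimate the paper uses. (Your non-completeness argument is fine, and the paper does not actually spell one out. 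The trivial-norm caveat is right to include: in that case $W^\dagger(F) = W(F)$ is complete, so the lemma implicitly excludes it.)

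For henselianness your route is genuinely different. The paper invokes the Raynaud/Gabber--Ramero criterion: a local ring is henselian once every monic $P = \sum_i P_i T^i$ with $P_0 \in pW^\dagger(F)$ and $P_1 \notin pW^\dagger(F)$ has a root in $pW^\dagger(F)$. This reduces everything to a single Newton--Raphson iteration from the initial guess $x_0 = -P_0/P_1$; the strict inequality $\left|P(x_0)\right|_r < \left|P'(x_0)\right|_r^2$, obtained for small $r$ from \eqref{eq:limsup}, gives genuinely quadratic convergence in the complete ring $W^{r-}(F)$ by Lemma~\ref{L:completeness}. You instead lift coprime monic factorizations directly and then pass to \'etale algebras, which is more work, and the hard part is exactly what your sketch elides: after fixing one $s>0$, you must show the iterates stay in $W^{s-}(F)[T]$ with $\left|\bullet\right|_s$-norms under control. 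With fixed Bezout data $u,v$ lifted once, the recursion on the renormalized errors $R_n = (P - Q_1^{(n)}Q_2^{(n)})/p^{n+1}$ is only \emph{linearly} contracting, with factor governed by $\left|S\right|_s$ and polynomial-division operator norms; by \eqref{eq:limsup} these tend to $1$, not to something less than $1$, as $s\to 0$. Convergence still holds because the stage-$n$ correction carries a factor $p^{n+1}$ and $1<p$, but extracting this requires an actual induction (showing $\left|R_n\right|_s$ grows geometrically slower than $p^n$), not just an appeal to Lemma~\ref{L:completeness}. The Raynaud criterion is precisely what lets the paper avoid this bookkeeping; if you want to proceed with factorizations, that estimate must be written out.
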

\begin{proof}
We first check that $W^\dagger(F)$ is local; it suffices to check that $p$ belongs to the Jacobson radical, meaning that any $x = \sum_{n=0}^\infty p^n [\overline{x}_n] \in W^\dagger(F)$ with $\overline{x}_0 = 1$ is a unit. By \eqref{eq:limsup},
for $r>0$ sufficiently small we have $\left| x-1 \right|_r < 1$.
By Lemma~\ref{L:completeness}, the geometric series $\sum_{i=0}^\infty (1-x)^i$ converges in $W^{r-}(F)$ to a multiplicative inverse of $x$.

We next check that $W^\dagger(F)$ is henselian. As described in \cite[Remark~1.2.7]{kedlaya-liu1}
(see also \cite[Theorem~5.11]{gabber-ramero}
and \cite[Expos\'e XI, \S 2]{raynaud}), to check that $W^\dagger(F)$ is henselian,
it suffices to check a special form of Hensel's lemma: every monic polynomial $P(T) = \sum_i P_i T^i \in W^\dagger(F)[T]$ with $P_0 \in p W^\dagger(F)$, $P_1 \notin p W^\dagger(F)$ has a root in $p W^\dagger(F)$.
By the previous paragraph, $P_1 \in W^\dagger(F)^\times$. By \eqref{eq:limsup},
for $r>0$ sufficiently small,
\[
\left| P_0/P_1 \right|_r < 1, \quad
\left| P_i (P_0/P_1)^{i} \right|_r < \left| P_1 \right|^2_r, \quad
\left| P_i (P_0/P_1)^{i-1} \right|_r < \left| P_1 \right|_r
 \qquad (i \geq 2).
\]
In particular, for $x_0 = -P_0/P_1$, for $r>0$ sufficiently small we have
\[
\left| P'(x_0) \right|_r < \left| P(x_0) \right|_r^2,
\]
so the usual Newton-Raphson iteration
\[
x_{n+1} = x_n - \frac{P(x_n)}{P'(x_n)}
\]
converges to a root $x$ of $P(T)$ in $W^{r-}(F)$. Since the iteration also converges $p$-adically, we must also have $x \in p W(F)$; this proves the claim.
\end{proof}

\begin{exercise} \label{exer:smaller ring}
We describe here a common variant of $W^{r-}(F)$. 
\begin{enumerate}
\item[(a)]
For any $\overline{z} \in \gotho_F$ with $\left|\overline{z}\right|' < 1$,
the ring $W(\gotho_F)[[\overline{z}]^{-1}]$ consists of those elements of $W(F)$ with bounded Teichm\"uller coordinates.
\item[(b)]
The completion of $W(\gotho_F)[[\overline{z}]^{-1}]$ under $\left| \bullet \right|_r$ consists of those
$x = \sum_{n=0}^\infty p^n [\overline{x}_n]$ for which $\lim_{n \to \infty} p^{-n} (\left|\overline{x}_n\right|')^r = 0$.
In particular, these form a ring, denoted $W^r(F)$.
\item[(c)]
The ring $W^r(F)$ is contained in $W^{r-}(F)$ and contains $W^{s-}(F)$ for all $s > r$.
Consequently, we also have $W^\dagger(F) = \cup_{r>0} W^r(F)$.
\item[(d)]
The ring $W^r(F)$ is complete with respect to $\left| \bullet \right|_r$. 
\end{enumerate}
The rings $W^r(F)$ arise naturally in the geometric interpretation of $p$-adic Hodge theory given by Fargues and Fontaine
\cite{fargues-fontaine}; this interpretation is facilitated by some useful noetherian properties of these rings, as described in \cite{kedlaya-noetherian}.
\end{exercise}

\begin{remark}
Let $\varphi$ denote the endomorphism of $W(F)$ induced by the Frobenius map on $F$.
For any $r>0$,
using the fact that $W^\dagger(F) = \cup_{n=0}^\infty W^{p^{-n} r-}(F) = \cup_{n=0}^\infty \varphi^{-n}(W^{r-}(F))$, it can be shown that
every finite \'etale $F$-algebra lifts uniquely to a finite \'etale $W^{r-}(F)$-algebra.
In case $F = K'$ for some deeply ramified analytic field $K$ of mixed characteristics, the
map $\Theta: W(\gotho_F) \to \gotho_K$ extends to a homomorphism $W^{r-}(F) \to K$ for any $r >1$, and 
Theorem~\ref{T:field of norms general} is equivalent to the statement that every finite \'etale
$K$-algebra lifts uniquely to $W^{r-}(F)$. (If we replace $W^{r-}(F)$ with the ring $W^r(F)$ of Exercise~\ref{exer:smaller ring}, we may also take $r=1$ in this last statement.)
\end{remark}

\section{Galois representations and $(\varphi, \Gamma)$-modules}
\label{sec:phi-Gamma}

\setcounter{theorem}{0}
\begin{hypothesis}
Throughout \S\ref{sec:phi-Gamma}, let $K$ denote a finite extension of $\Qp$, 
and write $G_K$ for the absolute Galois group of $K$.
\end{hypothesis}

\begin{convention}
When working with a matrix $A$ over a ring carrying a norm $\left| \bullet \right|$, we will write $\left|A\right|$ for the supremum of the
norms of the entries of $A$ (rather than the operator norm or spectral radius).
\end{convention}

\begin{defn}
Let $\FEt(R)$ denote the category of finite \'etale algebras over a ring $R$. For
$R$ a field,
these are just the direct sums of finite separable field extensions of $R$.
\end{defn}

\subsection{Some period rings over $\Qp$}

We will consider four different rings which can classify $G_K$-representations. We first introduce them all in the case
$K = \Qp$.

\begin{defn} \label{D:weak topology}
Let $L$ be the completed perfect closure of $\Fp((\overline{\pi}))$,
and put $\tilde{\bA}_{\Qp} = W(L)$. This defines a complete topological ring both for the $p$-adic topology
and for the \emph{weak topology}, under which a sequence converges if each sequence of Teichm\"uller coordinates
converges in the norm topology on $L$. 
(The restriction of the weak topology to $W(\gotho_L)$ coincides with the $(p, [\overline{\pi}])$-adic topology.)
Let $\varphi$ be the endomorphism of $\tilde{\bA}_{\Qp}$ induced by the Frobenius
map on $L$.
\end{defn}

\begin{defn} \label{D:imperfect}
Let $\bA_{\Qp}$ be the $p$-adic completion of $\ZZ((\pi))$; it is a Cohen ring (a complete discrete valuation ring
with maximal ideal $(p)$) with
$\bA_{\Qp}/(p) \cong \Fp((\overline{\pi}))$. 
We identify $\bA_{\Qp}$ with a subring of $\tilde{\bA}_{\Qp}$ in such a way that $\pi$
corresponds to $[1 + \overline{\pi}] - 1$. Note that $\varphi$ then acts on $\bA_{\Qp}$ as the
$\Zp$-linear substitution $\pi \mapsto (1 + \pi)^p - 1$,
and a sequence in $\bA_{\Qp}$ converges for the weak topology if and only if its image in $\bA_{\Qp}/(p^n) \cong (\ZZ/p^n \ZZ)((\pi))$ converges
$\pi$-adically for each positive integer $n$.
\end{defn}

\begin{defn}
Put $\tilde{\bA}^\dagger_{\Qp} = W^\dagger(L)$;
by Lemma~\ref{L:henselian}, this is an incomplete but henselian local ring
contained in $W(L) = \tilde{\bA}_{\Qp}$. Note that $\varphi$ acts bijectively on $\tilde{\bA}^{\dagger}_{\Qp}$.
We equip $\tilde{\bA}^{\dagger}_{\Qp}$ with the $p$-adic and weak topologies by restriction from $\tilde{\bA}_{\Qp}$;
we also define the \emph{LF topology}, in which a sequence converges if and only if it converges in some $W^{r-}(L)$.
(LF is an abbreviation for \emph{limit of Fr\'echet}.)
\end{defn}

\begin{defn} \label{D:dagger}
Put $\bA^{\dagger}_{\Qp} = \tilde{\bA}^\dagger_{\Qp} \cap \bA_{\Qp}$;
since $\Zp[\pi^{\pm}] \subset \bA^{\dagger}_{\Qp}$,
$\bA^{\dagger}_{\Qp}$ is again a henselian local ring with residue field $\Fp((\overline{\pi}))$
on which $\varphi$ acts.
It inherits $p$-adic, weak, and LF topologies.
For a more concrete description of $\bA^{\dagger}_{\Qp}$, see Corollary~\ref{C:describe dagger rings}.
\end{defn}

\begin{defn} \label{D:gamma}
For $\gamma \in \Gamma = \Zp^\times$, let $\gamma: \bA_{\Qp} \to \bA_{\Qp}$ be the $\Zp$-linear substitution
$\pi \mapsto (1 + \pi)^\gamma - 1$, where $(1 + \pi)^\gamma$ is defined via its binomial expansion.
The induced map on $\Fp((\overline{\pi}))$ extends to $L$ and thus defines an action of $\Gamma$
on $\tilde{\bA}_{\Qp}$; $\Gamma$ also acts on $\tilde{\bA}^{\dagger}_{\Qp}$ and $\bA^\dagger_{\Qp}$.
For $* \in \{\tilde{\bA}, \bA, \tilde{\bA}^\dagger, \bA^\dagger\}$,
the action of $\Gamma$ on $*_{\Qp}$ is continuous 
(meaning that the action map $\Gamma \times *_{\Qp} \to *_{\Qp}$ is continuous) for the weak topology
and (when available) the LF topology.
\end{defn}

\begin{exercise}
In Definition~\ref{D:gamma},
the action of $\Gamma$ on $*_{\Qp}$ is not continuous for the $p$-adic topology, even though the action of each individual element of $\Gamma$ is a continuous map from $*_{\Qp}$ to itself.
\end{exercise}

\subsection{Extensions of $\Qp$}

We extend the definition of the period rings to finite extensions of $\Qp$ using a refinement of
Theorem~\ref{T:field of norms general}.

\begin{theorem} \label{T:field of norms specific}
For $* \in \{ \tilde{\bA}, \bA, \tilde{\bA}^{\dagger}, \bA^\dagger\}$, the category $\FEt(\Qp)$
is equivalent to the category of finite \'etale algebras over $*_{\Qp}$ admitting an extension of the action of
$\Gamma$. Moreover, this equivalence is compatible with the base extensions among different choices of $*$.
\end{theorem}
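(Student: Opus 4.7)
The plan is to reduce each case to finite étale algebras over a residue ring in characteristic $p$ and then invoke the tilting correspondence from Theorem~\ref{T:field of norms general}. Throughout the argument, let $K$ denote the completion of $\Qp(\mu_{p^\infty})$, so by Example~\ref{exa:roots of unity} one has $K' = L$; the action of $\Gamma = \Gal(K/\Qp)$ on $K$ corresponds under tilting to the action on $L$ determined by $\gamma(\overline{\pi}) = (1+\overline{\pi})^\gamma - 1$, and this action preserves the subfield $\Fp((\overline{\pi}))$.

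First I would describe $\FEt(\Qp)$ purely in characteristic $p$. By Remark~\ref{R:complete fields}, the inclusion $\Qp(\mu_{p^\infty}) \hookrightarrow K$ induces $G_{\Qp(\mu_{p^\infty})} = G_K$, so that the exact sequence $1 \to G_K \to G_{\Qp} \to \Gamma \to 1$ identifies $\FEt(\Qp)$ with the category of finite étale $K$-algebras equipped with a compatible continuous $\Gamma$-action. Applying Theorem~\ref{T:field of norms general} to transport the $G_K$-side to $G_L$ (the tilting correspondence being $\Gamma$-equivariant by functoriality), this category becomes that of finite étale $L$-algebras with compatible $\Gamma$-action. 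Using Remark~\ref{R:complete fields} once more, the absolute Galois groups of $\Fp((\overline{\pi}))$, of its perfect closure, and of $L$ all coincide, so base extension along $\Fp((\overline{\pi})) \hookrightarrow L$ is a $\Gamma$-equivariant equivalence of finite étale categories; hence the same description of $\FEt(\Qp)$ holds with $L$ replaced by $\Fp((\overline{\pi}))$.

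Next, for each $* \in \{\tilde{\bA}, \bA, \tilde{\bA}^\dagger, \bA^\dagger\}$ I would establish that reduction modulo $p$ induces a $\Gamma$-equivariant equivalence between $\FEt(*_{\Qp})$ and $\FEt$ of the appropriate residue ring. For $* = \tilde{\bA}$, the ring $W(L)$ is $p$-adically complete with residue ring $L$ and the lifting result is standard; for $* = \tilde{\bA}^\dagger$, it is exactly Lemma~\ref{L:henselian}; for $* = \bA$, the ring $\bA_{\Qp}$ is a Cohen ring with residue field $\Fp((\overline{\pi}))$, so the same standard lifting applies. For $* = \bA^\dagger$ one must show that the intersection $\bA^\dagger_{\Qp} = \tilde{\bA}^\dagger_{\Qp} \cap \bA_{\Qp}$ is itself a henselian local ring with residue field $\Fp((\overline{\pi}))$: given a monic polynomial over $\bA^\dagger_{\Qp}$ with a simple root modulo $p$, the Newton--Raphson iteration inside $W^\dagger(L)$ (as in Lemma~\ref{L:henselian}) and the $p$-adically convergent Newton iteration inside $\bA_{\Qp}$ coincide as sequences in $\tilde{\bA}_{\Qp}$, so their common limit lies in $\bA^\dagger_{\Qp}$.

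Combining the two steps and passing to $\Gamma$-equivariant objects on both sides yields the desired equivalence in each of the four cases. Compatibility with base extensions is automatic: each inclusion among the four rings descends on residues to either the identity or the canonical inclusion $\Fp((\overline{\pi})) \hookrightarrow L$, and the full equivalence is built by functorial lifting on top of this map. The principal technical obstacle is the $\bA^\dagger_{\Qp}$ case, since the intersection definition forces one to reconcile two a priori different lifting processes and to confirm that the outcome is a henselian local ring with the correct residue field; once this is in hand, everything else is a routine assembly of the standard lifting theorems for finite étale algebras together with the Galois-theoretic input of Theorem~\ref{T:field of norms general}.
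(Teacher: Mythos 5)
Your proof is correct and follows essentially the same route as the paper: both establish that the seven $\FEt$ categories (over $\Qp(\mu_{p^\infty})$, $L$, $\Fp((\overline{\pi}))$, and the four period rings) are equivalent by combining Theorem~\ref{T:field of norms general}, Remark~\ref{R:complete fields}, and lifting of finite \'etale algebras along $p$-adically complete or henselian local rings, and then dispose of the $\Gamma$-equivariance by Galois descent through a finite layer $\Qp(\mu_{p^n})$. The only organizational difference is that you carry the $\Gamma$-action at the characteristic-$p$ level while the paper works it out once over $\tilde{\bA}_{\Qp}$ using the $\Gamma$-equivariance of $\Theta$; these are interchangeable. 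One small remark: the henselianity of $\bA^\dagger_{\Qp}$, which you flag as the main obstacle, is already recorded in Definition~\ref{D:dagger} (and used as such in the paper's proof), though your argument reconciling the Newton iterations in $W^\dagger(L)$ and in the $p$-adically complete ring $\bA_{\Qp}$ is a perfectly correct way to verify it.
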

\begin{proof}
Put $L = \tilde{\bA}_{\Qp}/(p)$.
Via Remark~\ref{R:complete fields}, Theorem~\ref{T:field of norms general}, and the fact that the local rings $\tilde{\bA}^{\dagger}_{\Qp}$ and $\bA^\dagger_{\Qp}$ are both henselian, we see that the categories
\[
\FEt(\Qp(\mu_{p^\infty})), \FEt(L), \FEt(\Fp((\overline{\pi}))), \FEt(\tilde{\bA}_{\Qp}), \FEt(\bA_{\Qp}), \FEt(\tilde{\bA}^\dagger_{\Qp}), \FEt(\bA^\dagger_{\Qp})
\]
are all equivalent, compatibly with base extensions among different choices of $*$. It thus suffices to consider
$* = \tilde{\bA}$ in what follows.

From the explicit description given in Example~\ref{exa:roots of unity}, we see that the
map $\Theta: W(\gotho_L) \to \Zp[\mu_{p^\infty}]$ becomes $\Gamma$-equivariant if we
identify $\Gamma$ with $\Gal(\Qp(\mu_{p^\infty})/\Qp)$ via the cyclotomic character. 
Consequently, for $K \in \FEt(\Qp)$, the object in $\FEt(\tilde{\bA}_{\Qp})$ corresponding to
$K \otimes_{\Qp} \Qp(\mu_{p^\infty})$ carries an action of $\Gamma$.

Conversely, suppose $S \in \FEt(\tilde{\bA}_{\Qp})$ carries an action of $\Gamma$;
then the corresponding object $E$ of $\FEt(\Qp(\mu_{p^\infty}))$ also carries an action of 
$\Gamma$. We may realize $E$ as the base extension of a finite \'etale algebra $E_n$ over $\Qp(\mu_{p^n})$
for some nonnegative integer $n$; by Artin's lemma, $E_n$ is fixed by a subgroup of $\Gamma$ of finite index,
which is necessarily open. By Galois descent, $E_n$ descends to a finite \'etale algebra $E$ over $\Qp$,
as desired.
\end{proof}

\begin{defn}
Let $\tilde{\bA}_K, \bA_{K}, \tilde{\bA}^\dagger_{K}, \bA^\dagger_{K}$
be the objects corresponding to $K$ via Theorem~\ref{T:field of norms specific}.
We may write $\tilde{\bA}_K = \oplus W(\tilde{L}), \tilde{\bA}^\dagger_K = \oplus W^\dagger(\tilde{L})$ for $\tilde{L}$ running over the connected components of $\tilde{\bA}_K/(p)$ (which correspond to the connected components
of $K \otimes_{\Qp} \Qp(\mu_{p^\infty})$). We may thus equip $*_K$ with a $p$-adic topology, a weak topology, and
(for $* = \tilde{\bA}^{\dagger}, \bA^\dagger$) also an LF topology.
Define the norm $\left| \bullet \right|'$ on $\tilde{\bA}_K/(p)$ as the supremum over connected components; for $r>0$,
define $\left| \bullet \right|_r$ on $\tilde{\bA}_K$ as the supremum over connected components.

The actions of $\varphi, \Gamma$ extend to $*_K$; the action of $\Gamma$ is again continuous for the weak topology
and (when available) the LF topology. Note that $\Gamma$ acts transitively on the $\tilde{L}$.
\end{defn}

\begin{exercise}
Each of the topologies on $*_K$ coincides with the one obtained by viewing $*_K$ as a finite free module
over $*_{\Qp}$ and equipping the latter with the corresponding topology.
\end{exercise}

\begin{remark} \label{R:power series}
Each connected component $L$ of $\bA_K/(p)$ is a finite separable extension of $\Fp((\overline{\pi}))$, and hence is itself
isomorphic to a power series field in some variable $\overline{\pi}_L$ over some finite extension $\mathbb{F}_q$ of $\Fp$.
In general, there is no distinguished choice of $\overline{\pi}_L$.
One has similar (and similarly undistinguished) descriptions of $\bA_K$ and $\bA^\dagger_K$; see
Lemma~\ref{L:describe rings}.
\end{remark}

\begin{exercise}
In Remark~\ref{R:power series}, $\mathbb{F}_q$ coincides with the residue field of $K(\mu_{p^\infty})$. Note that
this may not equal the residue field of $K$.
\end{exercise}

\begin{lemma} \label{L:describe rings}
Keep notation as in Remark~\ref{R:power series}.
Let $R$ be the connected component of $\bA_K$ with $R/(p) = L$, and choose $\pi_L \in R$ lifting $\overline{\pi}_L$.
Then $R$ is isomorphic to the $p$-adic completion of $W(\mathbb{F}_q)((\pi_L))$.
\end{lemma}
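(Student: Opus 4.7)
My plan is to construct a $W(\mathbb{F}_q)$-algebra isomorphism from the $p$-adic completion $S$ of $W(\mathbb{F}_q)((\pi_L))$ to $R$ carrying $\pi_L$ to $\pi_L$. Both $R$ and $S$ will be recognized as complete discrete valuation rings with uniformizer $p$ and residue field $L = \mathbb{F}_q((\overline{\pi}_L))$, and the isomorphism will be forced by the choice of $\pi_L$ together with the canonical Teichm\"uller lift of $\mathbb{F}_q$.

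First, I identify both rings as complete DVRs with uniformizer $p$ and residue field $L$. Since $\bA_K$ is finite \'etale over the complete DVR $\bA_{\Qp}$, its connected component $R$ is finite \'etale local over $\bA_{\Qp}$, hence itself a complete DVR with uniformizer $p$ and residue field $L$. For the other side, $W(\mathbb{F}_q)[[\pi_L]][\pi_L^{-1}]$ is a DVR with uniformizer $p$ and residue field $\mathbb{F}_q((\pi_L))$, and its $p$-adic completion $S$ remains such, with residue field identified with $L = \mathbb{F}_q((\overline{\pi}_L))$ via $\pi_L \mapsto \overline{\pi}_L$.

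Next, I construct $\phi: S \to R$. By Lemma~\ref{L:Teichmuller2} applied to the inclusion $\mathbb{F}_q \hookrightarrow L = R/(p)$ (using that $\mathbb{F}_q$ is perfect and $R$ is a $p$-torsion-free, $p$-adically complete ring), there is a canonical ring homomorphism $W(\mathbb{F}_q) \to R$. Since $\pi_L \in R$ is a unit (its reduction $\overline{\pi}_L$ being a unit in the field $L$), this extends to a map $W(\mathbb{F}_q)[\pi_L^{\pm}] \to R$. To extend further to $S$, I work in the weak topology on $R$, which by the exercise preceding the lemma coincides with the topology obtained by viewing $R$ as a finite free $\bA_{\Qp}$-module with $\bA_{\Qp}$ carrying its weak topology. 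The ring $R$ is complete for this topology, and $\pi_L$ is topologically nilpotent in it: in $L$ the element $\overline{\pi}$ equals a unit times a power of $\overline{\pi}_L$ by the ramification structure of $L/\Fp((\overline{\pi}))$, so modulo each $p^n$ the powers $\pi_L^m$ lie in arbitrarily high powers of the weak-topology-defining ideal. Hence any element $\sum_n a_n \pi_L^n \in S$ (with $a_n \in W(\mathbb{F}_q)$ and $v_p(a_n) \to \infty$ as $n \to -\infty$) maps to a weakly convergent sum in $R$: the positive-index part converges because $\pi_L^n \to 0$ weakly while the $a_n$ have bounded Teichm\"uller coordinates, and the negative-index part converges because $a_n \pi_L^n \in p^{v_p(a_n)} R$.

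Finally, $\phi$ is an isomorphism. Its reduction modulo $p$ is the natural $\mathbb{F}_q$-algebra identification $\mathbb{F}_q((\pi_L)) \cong L$ sending $\pi_L \mapsto \overline{\pi}_L$. Since $S$ and $R$ are both $p$-torsion-free and $p$-adically complete, a ring map between them that is an isomorphism modulo $p$ is automatically an isomorphism by the usual successive-approximation argument: surjectivity by iteratively lifting preimages modulo higher powers of $p$ and summing $p$-adically in $R$, injectivity by $\bigcap_n p^n S = 0$ in the DVR $S$. The main obstacle is verifying that $\pi_L$ is topologically nilpotent in the weak topology on $R$, since this is not visible from the $p$-adic topology (where $\pi_L$ is a unit) yet is essential for defining $\phi$ on all of $S$; the verification reduces to a concrete comparison between $\pi_L$ and the distinguished element $\pi$ of $\bA_{\Qp}$ via the ramification of the residue extension $L/\Fp((\overline{\pi}))$.
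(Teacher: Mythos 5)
Your approach is genuinely different from the paper's and more explicit. The paper's entire proof is one sentence: the $p$-adic completion of $W(\mathbb{F}_q)((\pi_L))$ is a finite \'etale $\bA_{\Qp}$-algebra whose reduction modulo $p$ is $L$, and since $\bA_{\Qp}$ is a complete (hence henselian) discrete valuation ring, the equivalence $\FEt(\bA_{\Qp}) \cong \FEt(\Fp((\overline{\pi})))$ from Theorem~\ref{T:field of norms specific} forces this ring to agree with the connected component $R$. You instead build the isomorphism by hand, lifting $\mathbb{F}_q$ via Lemma~\ref{L:Teichmuller2}, sending $\pi_L \mapsto \pi_L$, and extending by a weak-topology convergence argument, then checking the reduction mod $p$ is an isomorphism and appealing to $p$-adic completeness. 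Your route is more elementary in that it avoids the henselian classification, and has the mild advantage of exhibiting the isomorphism carrying the formal variable to the chosen lift; the cost is the topological bookkeeping (convergence of $\sum a_n\pi_L^n$ in $R$ and multiplicativity of the resulting map), which you gesture at rather than nail down. That sketchiness is acceptable as a proof plan, but the citation is off: the weak-topology compatibility you invoke is in the exercise \emph{following} this lemma, not the one preceding it.

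There is one outright error: the ring $W(\mathbb{F}_q)[[\pi_L]][\pi_L^{-1}]$ is \emph{not} a discrete valuation ring. It is one-dimensional but has infinitely many maximal ideals (for instance, $p - \pi_L$ is a nonunit not contained in $(p)$, so $p$ is not in the Jacobson radical), so it is not even local. What is true, and what you actually need, is that its $p$-adic completion $S$ is a complete DVR with uniformizer $p$; this should be argued directly (e.g.\ $S$ is $p$-torsion-free, $p$-adically complete, and $S/(p) \cong \mathbb{F}_q((\pi_L))$ is a field, hence $(p)$ is the unique maximal ideal). The conclusion is correct, but it should not be derived from the false premise about the uncompleted ring.
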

\begin{proof}
It suffices to observe that the latter ring is indeed a finite \'etale algebra over $\bA_{\Qp}$ whose reduction modulo
$p$ is isomorphic to $L$.
\end{proof}

\begin{exercise}
In Lemma~\ref{L:describe rings}, the weak topology on $R$ (obtained by restriction from $\bA_K$) coincides with the
weak topology on the $p$-adic completion of $W(\mathbb{F}_q)((\pi_L))$ (in which as in Definition~\ref{D:imperfect},
a sequence converges if and only if it converges $\pi_L$-adically modulo each power of $p$).
\end{exercise}

\begin{lemma} \label{L:describe dagger rings}
Keep notation as in Lemma~\ref{L:describe rings}, but assume further that
$\pi_L \in R^\dagger$ for $R^\dagger$ the connected component of $\bA^\dagger_K$ with $R^\dagger/(p) = L$.
Then there exists $r_0 > 0$ (depending on $L$ and $\pi_L$) with the following properties.
\begin{enumerate}
\item[(a)]
Every $\overline{x} \in R/(p)$ admits a lift
$x \in R^\dagger$ with $\left|x - [\overline{x}]\right|_{r} < \left|x\right|_{r} < +\infty$ for all $r \in (0, r_0]$.
\item[(b)]
For $r \in (0,r_0]$, for $x =\sum_{n \in \ZZ} x_n \pi_L^n \in R$ with $x_n \in 
W(\mathbb{F}_q)$,
\begin{equation} \label{eq:compare norm}
\left|x\right|_r = \sup_n \{\left|x_n\right| (\left|\overline{\pi}_L\right|')^{nr}\}.
\end{equation}
\end{enumerate}
\end{lemma}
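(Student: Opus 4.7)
The plan is to pin down $r_0$ by comparing $|\pi_L - [\overline{\pi}_L]|_r$ with $(|\overline{\pi}_L|')^r = |[\overline{\pi}_L]|_r$ in the regime $r \to 0^+$, then to leverage multiplicativity of $|\bullet|_r$ for (b), and finally to deduce (a) by factoring off a power of $\overline{\pi}_L$ to reduce to the case of units. Since the zeroth Teichm\"uller coordinate of $\pi_L - [\overline{\pi}_L]$ vanishes, \eqref{eq:limsup} gives $\limsup_{r \to 0^+}|\pi_L - [\overline{\pi}_L]|_r \leq p^{-1}$ while $\lim_{r \to 0^+}(|\overline{\pi}_L|')^r = 1$; I would choose $r_0 > 0$ small enough that $\pi_L \in W^{r_0-}(\tilde{L})$ (possible since $\pi_L \in R^\dagger$) and $|\pi_L - [\overline{\pi}_L]|_{r_0} < (|\overline{\pi}_L|')^{r_0}$. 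The log-convexity inequality \eqref{eq:convex1} then propagates this to
\[
|\pi_L - [\overline{\pi}_L]|_r \leq |\pi_L - [\overline{\pi}_L]|_{r_0}^{r/r_0} < (|\overline{\pi}_L|')^r
\]
for all $r \in (0, r_0]$, and the strong triangle inequality yields $|\pi_L|_r = (|\overline{\pi}_L|')^r$.

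For part (b), multiplicativity of $|\bullet|_r$ (Lemma~\ref{L:Gauss norm}) combined with the preceding step gives $|x_n \pi_L^n|_r = |x_n|(|\overline{\pi}_L|')^{nr}$, using $|x_n|_r = |x_n|$ for $x_n \in W(\mathbb{F}_q)$. For a Laurent polynomial $x$, the argument at the end of the proof of Lemma~\ref{L:Gauss norm} applies: for all but finitely many $r$ the summand norms are distinct, so the ultrametric inequality gives equality, extended to all $r \in (0, r_0]$ by continuity in $r$. For general $x \in R$, I would approximate by Laurent polynomials: if the supremum $\sup_n |x_n|(|\overline{\pi}_L|')^{nr}$ is attained at some $n^\ast$, a partial sum containing $n^\ast$ has $|\bullet|_r$ equal to the supremum while the truncation tail has strictly smaller norm, so the ultrametric inequality closes the argument; the remaining cases (supremum attained only asymptotically, or infinite) are handled by continuity in $r$ or by monotone limits of partial-sum bounds.

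For part (a), assuming $\overline{x} \neq 0$ (the zero case being trivial), I would factor $\overline{x} = \overline{\pi}_L^{n_0} \overline{u}$ with $\overline{u} \in \gotho_L^\times$, expand $\overline{u} = \sum_{n \geq 0} b_n \overline{\pi}_L^n$ with $b_0 \neq 0$, and take the coefficient-wise Teichm\"uller lift $u = \sum_{n \geq 0} [b_n] \pi_L^n$, setting $x = \pi_L^{n_0} u \in R^\dagger$. By (b), $|u|_r = 1$ and $|x|_r = (|\overline{\pi}_L|')^{n_0 r}$. Decomposing
\[
x - [\overline{x}] = \pi_L^{n_0}(u - [\overline{u}]) + (\pi_L^{n_0} - [\overline{\pi}_L]^{n_0})[\overline{u}],
\]
the second summand factors through $\pi_L - [\overline{\pi}_L]$ via a geometric-series identity and has $|\bullet|_r < |x|_r$ by the first paragraph. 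For the first summand, I would split $u - [\overline{u}] = (u - y) + (y - [\overline{u}])$ with $y = \sum_n [b_n \overline{\pi}_L^n]$: the bound $|u - y|_r \leq |\pi_L - [\overline{\pi}_L]|_r < 1$ is immediate, and iteratively applying Remark~\ref{R:addition formula} to truncations $y_N$, $[\overline{u}_N]$ gives $|y_N - [\overline{u}_N]|_r \leq p^{-1}$ uniformly in $N$.

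The main obstacle will be extending this last uniform bound to the limit $N \to \infty$: one needs $[\overline{u}_N] \to [\overline{u}]$ in $|\bullet|_r$, whereas the Teichm\"uller lift is automatically continuous only for the weak topology. The remedy is to combine Remark~\ref{R:addition formula} with the additivity of Frobenius in characteristic $p$ (which lets one factor the polynomials $P_n(\overline{a}, \overline{b})$ through $\overline{a}^{1/p^n} - \overline{b}^{1/p^n}$) to derive the refined bound $|[\overline{a}] - [\overline{b}]|_r \leq \sup_k p^{-k} (|\overline{a} - \overline{b}|')^{r/p^k}$ for $\overline{a}, \overline{b} \in \gotho_{\tilde{L}}$; a short calculation shows this supremum tends to $0$ as $|\overline{a} - \overline{b}|' \to 0$, which supplies the required bounded continuity. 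Combining everything gives $|y - [\overline{u}]|_r \leq p^{-1} < 1$, so $|u - [\overline{u}]|_r < 1 = |u|_r$ and $|x - [\overline{x}]|_r < |x|_r$, completing part (a).
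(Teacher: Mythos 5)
Your proof is correct, but it departs from the paper's argument in both halves. For (b), the paper does not approximate by Laurent polynomials at all; it first records the upper bound $\left|x\right|_r \leq \sup_n\{\left|x_n\right|(\left|\overline{\pi}_L\right|')^{nr}\}$ from the ultrametric inequality and multiplicativity, and then proves the lower bound by induction on $m$: one shows $\left|x\right|_r$ dominates the supremum over those $n$ with $x_n$ not divisible by $p^{m+1}$, passing from $m-1$ to $m$ by subtracting the lift of part (a) and dividing the difference by $p$. That induction handles uniformly the cases you relegate to ``continuity or monotone limits'' — these cases are genuinely nontrivial (the supremum may be $+\infty$ when $x\notin R^\dagger$, and even when finite it need not be attained at any index), so your sketch for (b) would need to be tightened there to be a complete proof. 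For (a), your decomposition of $x-[\overline{x}]$ via the intermediate element $y=\sum_n[\overline{x}_n\overline{\pi}_L^n]$ is morally what the paper's terse argument does, but the refined continuity estimate $\left|[\overline{a}]-[\overline{b}]\right|_r\leq\sup_k p^{-k}(\left|\overline{a}-\overline{b}\right|')^{r/p^k}$ you introduce, while correct and a pleasant observation, is not needed: one can instead note that each partial sum $y_N$ lies in $W(\gotho_L)$, and the proof of Lemma~\ref{L:completeness} (see \eqref{eq:completeness}) shows that the Teichm\"uller coordinates of $y_N$ converge pointwise to those of $y$, preserving the bound $\left|\bullet\right|'\leq 1$. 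Hence $y\in W(\gotho_L)$; since $y-[\overline{u}]$ reduces to $0$ modulo $p$, its zeroth Teichm\"uller coordinate vanishes while all others lie in $\gotho_L$, so Remark~\ref{R:norm property} yields $\left|y-[\overline{u}]\right|_r\leq p^{-1}$ directly, without invoking continuity of the Teichm\"uller map for the $\left|\bullet\right|_r$-topology.
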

\begin{proof}
We have $R^\dagger = R \cap W^\dagger(L)$ because both sides are subrings of $R$ which are finite \'etale over
$\bA^\dagger_{\Qp}$ and which surject onto $R/(p)$. Consequently, $\pi_L \in W^{r-}(L)$ for some $r>0$.
By \eqref{eq:limsup},
we can choose $r_0>0$ so that 
\begin{equation} \label{eq:bound pi}
\left|\pi_L - [\overline{\pi}_{L}]\right|_{r} < \left|\pi_L\right|_{r} = \left|[\overline{\pi}_{L}]\right|_{r}
\qquad (r \in (0,r_0]).
\end{equation}
We prove the claims for any such $r_0$.

To prove (a), lift $\overline{x} = \sum_{n \in \ZZ}\overline{x}_n \overline{\pi}_L^n \in R/(p)$ to $x = \sum_{n \in \ZZ} [\overline{x}_n] \pi_L^n \in R$. This lift
satisfies $\left|x - [\overline{x}]\right|_r < (\left|\overline{x}\right|')^r = \left|[\overline{x}]\right|_r = \left|x\right|_r$ for all $r \in (0, r_0]$ thanks to
\eqref{eq:bound pi}.

To prove (b), first note that since $\left|\bullet \right|_r$ is a norm, \eqref{eq:bound pi}
implies 
\[
\left|x\right|_r  \leq \sup_n \{\left|x_n\right| (\left|\overline{\pi}_L\right|')^{nr}\}.
\]
To finish, it is enough to establish by induction that for each nonnegative integer $m$,
$\left|x\right|_r$ is at least the supremum of $\left|x_n\right| (\left|\overline{\pi}_L\right|')^{nr}$ over indices $n$
for which $x_n$ is not divisible by $p^{m+1}$.
This is clear for $m=0$. If $m>0$, there is nothing to check unless the supremum is only achieved in cases when $x_n$
is divisible by $p^{m}$. In that case, lift the reduction $\overline{x}$ of $x$ modulo $p$ as in (a) to some $y$
with $\left|y - [\overline{x}]\right|_r < \left|y\right|_r$ for $r \in (0,r_0]$. For $z = (x-y)/p = \sum_n z_n \pi_L^n$, the
supremum in question is also the supremum of $p^{-1} \left|z_n\right| (\left|\overline{\pi}_L\right|')^{nr}$ over indices $n$
for which $z_n$ is not divisible by $p^m$.
By the induction hypothesis, this is at most
\[
p^{-1} \left|z\right|_r = \left|pz\right|_r \leq \max\{\left|x\right|_r, \left|y\right|_r\} = \left|x\right|_r.
\]
This completes the induction, yielding (b).
\end{proof}
This gives us a concrete description of $R^\dagger$ in terms of the coefficients of a series representation.
\begin{cor} \label{C:describe dagger rings}
With notation as in Lemma~\ref{L:describe dagger rings},
$x \in R^\dagger$ if and only if there exists $r>0$ such that
$\sup_n \{\left|x_n\right| (\left|\overline{\pi}_L\right|')^{nr}\} < +\infty$.
\end{cor}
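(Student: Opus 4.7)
The plan is to read off the corollary directly from the explicit formula in Lemma~\ref{L:describe dagger rings}(b), with only a cosmetic adjustment to move from an arbitrary $r > 0$ into the range $(0, r_0]$ where that formula applies.

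First I would unwind the definition: by construction $R^\dagger = R \cap W^\dagger(L) = \bigcup_{r > 0} R \cap W^{r-}(L)$, and by Remark~\ref{R:Hadamard} one has $\left|x\right|_s \leq \left|x\right|_r^{s/r}$ for $0 < s \leq r$, so $W^{r-}(L) \subseteq W^{s-}(L)$ whenever $s \leq r$. Consequently the union may equally well be restricted to $r \in (0, r_0]$.

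For the forward implication, if $x \in R^\dagger$ then $x \in W^{r-}(L)$ for some $r \in (0, r_0]$, so $\left|x\right|_r < +\infty$; Lemma~\ref{L:describe dagger rings}(b) rewrites this inequality as the asserted supremum condition.

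For the converse, suppose the supremum is finite at some $r > 0$. If $r \leq r_0$, formula (b) immediately yields $\left|x\right|_r < +\infty$, whence $x \in W^{r-}(L) \cap R \subseteq R^\dagger$. If $r > r_0$, I would set $r' = r_0$ and split the supremum by the sign of $n$: for $n \geq 0$, $\left|x_n\right| (\left|\overline{\pi}_L\right|')^{nr'} \leq 1$ because $\left|x_n\right| \leq 1$ (as $x_n \in W(\mathbb{F}_q)$) and $\left|\overline{\pi}_L\right|' < 1$; for $n < 0$, $(\left|\overline{\pi}_L\right|')^{nr'} \leq (\left|\overline{\pi}_L\right|')^{nr}$ since $nr' \geq nr$, so these terms are dominated by the hypothesis at $r$. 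Hence the supremum is finite at $r'$, and formula (b) at $r' \leq r_0$ concludes. The only mild subtlety is this reduction to $r \leq r_0$; everything else is immediate from Lemma~\ref{L:describe dagger rings}(b).
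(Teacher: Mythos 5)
Your proof is correct and is the intended reading of the paper's statement: the corollary is meant to be read off directly from Lemma~\ref{L:describe dagger rings}(b) together with the identification $R^\dagger = R \cap W^\dagger(L) = \bigcup_{r>0} R \cap W^{r-}(L)$ established in the proof of that lemma. You correctly flag (and handle) the one detail the terse statement glosses over: the formula in part (b) only applies for $r \in (0, r_0]$, while the corollary quantifies over all $r > 0$. Your reduction to $r \leq r_0$ is right on both sides. On the forward side it uses the monotonicity $W^{r-}(L) \subseteq W^{s-}(L)$ for $s \leq r$, which follows from Remark~\ref{R:Hadamard}. On the converse side, your bookkeeping at $r > r_0$ is sound: for $n \geq 0$ the terms $|x_n|\,(|\overline{\pi}_L|')^{n r_0}$ are bounded by $1$ since $|x_n| \leq 1$ on $W(\mathbb{F}_q)$ and $|\overline{\pi}_L|' < 1$, while for $n < 0$ shrinking the exponent from $r$ to $r_0$ only decreases $(|\overline{\pi}_L|')^{nr}$, so finiteness at $r$ forces finiteness at $r_0$. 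This matches what the paper leaves implicit; there is no gap.
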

\begin{cor} \label{C:good lifts}
There exists $r_0 > 0$ (depending on $K$) such that every $\overline{x} \in \bA_K/(p)$ admits a lift
$x \in \bA_K^\dagger$ with $\left|x - [\overline{x}]\right|_{r} < \left|x\right|_{r}$ for all $r \in (0, r_0]$.
\end{cor}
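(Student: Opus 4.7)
The plan is to deduce this corollary from Lemma~\ref{L:describe dagger rings}(a) by assembling its conclusions across the finitely many connected components of $\bA_K$. Decompose $\bA_K = \bigoplus_{L} R_L$ where $L$ ranges over the (finitely many) connected components of $\bA_K/(p)$, and each $R_L$ is the connected component of $\bA_K$ with $R_L/(p) = L$. Similarly, $\bA_K^\dagger = \bigoplus_L R_L^\dagger$ with $R_L^\dagger/(p) = L$. The norms $\left|\bullet\right|_r$ on $\bA_K$ are, by definition, the supremum over components of the norms on the $R_L$.

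For each $L$, choose a lift $\pi_L \in R_L^\dagger$ of a uniformizer $\overline{\pi}_L \in L$ as in Remark~\ref{R:power series}; such a lift exists because $\bA_K^\dagger$ is henselian with residue field $\bA_K/(p)$, so reduction mod $p$ surjects $R_L^\dagger \to L$. Apply Lemma~\ref{L:describe dagger rings} to each pair $(L, \pi_L)$ to obtain a constant $r_{0,L} > 0$ with the property of part~(a) of that lemma. Since there are only finitely many components, set
\[
r_0 = \min_L r_{0,L} > 0.
\]

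Given $\overline{x} \in \bA_K/(p)$, write $\overline{x} = \sum_L \overline{x}_L$ with $\overline{x}_L \in L$. For each $L$, apply Lemma~\ref{L:describe dagger rings}(a) to $\overline{x}_L$ to produce a lift $x_L \in R_L^\dagger$ satisfying
\[
\left|x_L - [\overline{x}_L]\right|_r < \left|x_L\right|_r < +\infty \qquad (r \in (0, r_0]).
\]
Set $x = \sum_L x_L \in \bA_K^\dagger$; this lifts $\overline{x}$, and by componentwise multiplicativity of Teichm\"uller lifts, $[\overline{x}] = \sum_L [\overline{x}_L]$.

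It remains to verify that $\left|x - [\overline{x}]\right|_r < \left|x\right|_r$ for all $r \in (0, r_0]$. Both norms are computed as maxima over the finite set of components: $\left|x\right|_r = \max_L \left|x_L\right|_r$ and $\left|x - [\overline{x}]\right|_r = \max_L \left|x_L - [\overline{x}_L]\right|_r$. For each individual $L$ we have $\left|x_L - [\overline{x}_L]\right|_r < \left|x_L\right|_r \leq \left|x\right|_r$, so each term in the left-hand maximum is strictly less than $\left|x\right|_r$; since the maximum is over a finite set, the strict inequality is preserved. This yields the claim, and no step presents a serious obstacle since Lemma~\ref{L:describe dagger rings} carries the analytic content; the only subtlety is remembering that preservation of strict inequality under suprema requires finiteness of the index set, which is precisely what the finite \'etale hypothesis on $K/\Qp$ provides.
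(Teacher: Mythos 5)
Your proof is correct and follows exactly the approach the paper takes: the paper's entire proof of this corollary is the single sentence ``Apply Lemma~\ref{L:describe dagger rings}(a) to each connected component of $\bA_K$.'' You have simply spelled out what is implicit in that sentence (the choice of $r_0$ as the minimum over the finitely many components, the componentwise Teichm\"uller lift, and the observation that a maximum over a finite index set preserves strict inequality), and the bookkeeping is all sound.
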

\begin{proof}
Apply Lemma~\ref{L:describe dagger rings}(a) to each connected component of $\bA_K$.
\end{proof}

\begin{exercise}
With notation as in Lemma~\ref{L:describe dagger rings}, one can choose $r_0$
so that for $r \in (0,r_0]$, $x \in W^{r-}(L)$ if and only if $\sup_n \{\left|x_n\right| (\left|\overline{\pi}_L\right|')^{nr}\} < +\infty$.
\end{exercise}


\begin{remark} \label{R:connected components}
Beware that our notations do not agree with \cite{brinon-conrad} or most other references when $K \neq \Qp$.
It is more customary to take $\bA_K$ to be the finite \'etale
algebra over $\bA_{\Qp}$ corresponding to a connected component of 
$K \otimes_{\Qp} \Qp(\mu_{p^\infty})$, and similarly for the other
period rings. These rings inherit an action of $\varphi$ and of the subgroup $\Gamma_K$ of $\Gamma$ fixing 
a component of $K_\infty$; the action of $\Gamma$ on $\bA_K$ acts transitively on connected components.
Our point of view has the mild advantage of making the relationship between $K$ and $\bA_K$ more uniform;
for instance, for $L$ a finite Galois extension of $K$, $\Gal(L/K)$ acts on $\bA_L$ with fixed ring
$\bA_K$. We leave to the reader the easy task of translating back and forth between statements in terms of the usual
rings and the corresponding statements in our language.
\end{remark}

\subsection{\'Etale $(\varphi, \Gamma)$-modules}

\begin{defn}
Let $R$ be any of $\tilde{\bA}_K, \bA_K, \tilde{\bA}^{\dagger}_K, \bA^\dagger_K$.
Let $M$ be a finite free $R$-module.
A \emph{semilinear action} of $\varphi$ on $M$ is an additive map $\varphi: M \to M$ for which $\varphi(rm) = \varphi(r) \varphi(m)$ for all $r \in R, m \in M$. Such an action is \emph{\'etale}
if it takes some basis of $M$ to another basis; the same is then true of any basis
(by Remark~\ref{R:action to linear} or Remark~\ref{R:skew conjugation}).
An \emph{\'etale $\varphi$-module} over $R$ is a finite free $R$-module $M$ equipped with
an \'etale semilinear action of $\varphi$.

We similarly define semilinear actions of $\gamma \in \Gamma$. To define a semilinear action of $\Gamma$ as a whole, we insist
that the actions of individual elements compose: for all $\gamma_1, \gamma_2 \in \Gamma$ and $m \in M$, we must have
$\gamma_1(\gamma_2(m)) = (\gamma_1 \gamma_2)(m)$. We say an action of $\Gamma$ is \emph{continuous} if the action map
$\Gamma \times M \to M$ is continuous for the weak topology and (when available) the LF topology.
An \emph{\'etale $(\varphi, \Gamma)$-module} over $R$ is an \'etale $\varphi$-module $M$
equipped with a continuous action of $\Gamma$ commuting with $\varphi$.
(The continuity condition can be omitted; see Exercise~\ref{exer:no continuity}.)
\end{defn}

\begin{remark} \label{R:action to linear}
Define $\varphi^*M = R \otimes_R M$ as a left $R$-module where the left tensorand $R$ is viewed as a left $R$-module in the usual way
and as a right $R$-module via $\varphi$; that is, we have $1 \otimes rm = \varphi(r) \otimes m$ and
$r(s \otimes m) = rs \otimes m$.
One may then view a semilinear action of $\varphi$ as an $R$-linear map $\Phi: \varphi^* M \to M$;
the action is \'etale if and only if $\Phi$ is an isomorphism.
\end{remark}
 
\begin{remark} \label{R:skew conjugation}
A semilinear $\varphi$-action can be specified in terms of a basis $\be_1,\dots,\be_d$ by exhibiting the matrix
$A$ for which $\varphi(\be_j) = \sum_i A_{ij} \be_i$ (which we sometimes call the \emph{matrix of action} of $\varphi$
on the basis). If $\be'_1,\dots,\be'_d$ is another basis, then there exists
an invertible matrix $U$ with $\be'_j = \sum_i U_{ij} \be_i$, and the matrix of action of $\varphi$
on the new basis is $U^{-1} A \varphi(U)$.
\end{remark}

The following lemma has its origins in a construction of Lang; see for example
\cite[Expos\'e XXII, Proposition 1.1]{sga7-2}.

\begin{lemma} \label{L:trivialize}
Let $M$ be an \'etale $(\varphi, \Gamma)$-module over $\tilde{\bA}_K$.
Then for each positive integer $n$, there exists a finite extension $L$ of $K$ for which
\[
(M \otimes_{\tilde{\bA}_K} \tilde{\bA}_L/(p^n))^{\varphi,\Gamma} \otimes_{\Zp} 
\tilde{\bA}_L \to M \otimes_{\tilde{\bA}_K} \tilde{\bA}_L/(p^n)
\]
is an isomorphism.
\end{lemma}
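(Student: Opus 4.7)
The plan is to find $L$ in two stages: first I trivialize the $\varphi$-action modulo $p^n$ via a Lang-style torsor argument, then I enlarge $L$ to absorb the residual (finite) $\Gamma$-action.

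For the first stage, fix an $\tilde{\bA}_K$-basis $\be_1,\dots,\be_d$ of $M$ and let $A \in \GL_d(\tilde{\bA}_K)$ be the matrix of $\varphi$. Changing basis by some $U \in \GL_d(\tilde{\bA}_L/(p^n))$ produces a $\varphi$-fixed basis of $M \otimes \tilde{\bA}_L/(p^n)$ exactly when $\varphi(U) = A^{-1} U$. Working modulo $p$ first, this equation cuts out a closed subscheme of $\GL_d$ over $\tilde{\bA}_K/(p)$ via $U_{ij}^p = \sum_k (\overline{A}^{-1})_{ik} U_{kj}$; a Jacobian computation (using that $p U_{ab}^{p-1} = 0$ in characteristic $p$) shows this is \'etale over $\tilde{\bA}_K/(p)$ of constant degree $\#\GL_d(\Fp)$. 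Since any two solutions differ multiplicatively by an element of $\GL_d((\tilde{\bA}_K/(p))^\varphi) = \GL_d(\Fp)$, and Lang's theorem for $\GL_d$ provides a solution over any algebraically closed extension, this is a $\GL_d(\Fp)$-torsor and is split over some finite \'etale $\tilde{\bA}_K/(p)$-algebra. By Theorem~\ref{T:field of norms general}, the corresponding finite extension $L_1/K$ has the property that the torsor has a solution in $\tilde{\bA}_{L_1}/(p)$. Finite \'etale algebras lift uniquely along nilpotent thickenings, so this mod $p$ trivialization lifts canonically to $U \in \GL_d(\tilde{\bA}_{L_1}/(p^n))$.

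In the second stage, set $T_1 := (M \otimes \tilde{\bA}_{L_1}/(p^n))^\varphi$. By Stage 1 this is free of rank $d$ over $(\tilde{\bA}_{L_1}/(p^n))^\varphi$, itself a product of finitely many copies of $\ZZ/p^n\ZZ$ indexed by the connected components of $\tilde{\bA}_{L_1}/(p)$; in particular $T_1$ is finite, so the continuous action of $\Gamma$ on $T_1$ factors through a finite quotient $\Gamma/\Gamma'$. Pick $m$ with $\Gal(\Qp(\mu_{p^\infty})/\Qp(\mu_{p^m})) \subseteq \Gamma'$ and take $L = L_1 \cdot \Qp(\mu_{p^m})$. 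The $\varphi$-fixed basis survives base change to $\tilde{\bA}_L/(p^n)$, and after this enlargement $\Gamma$ permutes the connected components of $\tilde{\bA}_L/(p)$ transitively through the surjection $\Gamma \twoheadrightarrow \Gal(L \cap \Qp(\mu_{p^\infty})/\Qp)$. A direct calculation (essentially Frobenius reciprocity for this finite group acting on an induced module) identifies $(M \otimes \tilde{\bA}_L/(p^n))^{\varphi,\Gamma}$ with a free $\ZZ/p^n\ZZ$-module of rank $d$ whose base change to $\tilde{\bA}_L/(p^n)$ recovers $M \otimes \tilde{\bA}_L/(p^n)$.

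The main obstacle will be the bookkeeping in Stage 2: the $\Gamma$-action on $\tilde{\bA}_L$ for $L \supsetneq K$ simultaneously permutes the components of $\tilde{\bA}_L/(p)$ and acts on coefficients within each component, so verifying that $(M \otimes \tilde{\bA}_L/(p^n))^{\varphi,\Gamma}$ has the correct $\ZZ/p^n$-rank and spans $M \otimes \tilde{\bA}_L/(p^n)$ after base change requires tracking the finite-group action of $\Gamma/\Gal(\Qp(\mu_{p^\infty})/\Qp(\mu_{p^m}))$ on the product ring $(\tilde{\bA}_L/(p^n))^\varphi$ and exploiting the induced-module structure of $T_1 \otimes_{\ZZ/p^n\ZZ} (\tilde{\bA}_L/(p^n))^\varphi$. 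By contrast, Stage 1 is essentially a packaging of standard Lang and deformation arguments.
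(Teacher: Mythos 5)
Your approach is genuinely different from the paper's, and it is instructive to compare them. The paper constructs, in one step, a finite \'etale algebra $S_n$ over $\tilde{\bA}_K/(p^n)$ parametrizing $\varphi$-fixed bases mod $p^n$, \emph{and equips $S_n$ with a $\Gamma$-action from the outset} via the cocycle matrices $G_\gamma$ (sending $X \mapsto G_\gamma \gamma(X)$). This $\Gamma$-equivariance is exactly what Theorem~\ref{T:field of norms specific} needs in order to convert $S_n$ directly into a finite \'etale algebra over $K$ (not merely over $\Qp(\mu_{p^\infty})$); the $\Gamma$-trivialization is then absorbed for free, because Theorem~\ref{T:field of norms specific} is the $\Gamma$-equivariant upgrade of the bare field-of-norms correspondence. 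You instead decouple $\varphi$ and $\Gamma$ into two stages: first split the Lang torsor, then kill the residual $\Gamma$-action.

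The gap in your Stage 1 is at the sentence ``By Theorem~\ref{T:field of norms general}, the corresponding finite extension $L_1/K$\ldots''. Theorem~\ref{T:field of norms general} applies to the perfectoid field which is the completion of $K(\mu_{p^\infty})$, so splitting the torsor over a finite \'etale algebra over $\tilde{\bA}_K/(p)$ a priori only yields a finite extension $E$ of that completed cyclotomic field, not a finite extension of $K$. You then need an extra argument, not present in your write-up, to produce $L_1/K$ finite with $L_1(\mu_{p^\infty})$ large enough to contain (a model of) $E$: descend $E$ to an algebraic extension $E_0/K(\mu_{p^\infty})$ via Remark~\ref{R:complete fields}, write $E_0 = K(\mu_{p^\infty})(\alpha)$ with the minimal polynomial of $\alpha$ defined over some $K(\mu_{p^m})$, and set $L_1 = K(\mu_{p^m},\alpha)$. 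This step is precisely the descent from $\Qp(\mu_{p^\infty})$ to $\Qp$ that the paper handles once and for all by keeping the $\Gamma$-action on $S_n$; in your decoupled version it has to be done by hand here and then again in Stage 2. (Note also that the paper's $S_n$ is cut out by the equations $\varphi^m(A\varphi(X)-X)$ for \emph{all} $m \in \ZZ$ in order to make the ideal $\varphi$-stable and to make the induction on $n$ through Artin--Schreier equations go through cleanly; your appeal to ``lifting along nilpotent thickenings'' is correct in spirit but presupposes that the mod-$p^n$ scheme of $\varphi$-fixed bases is finite \'etale, which is what this construction establishes.) Your Stage 2 is a reasonable sketch once Stage 1 is repaired: after enlarging $L_1$ by enough cyclotomic elements, the module $T_L$ of $\varphi$-invariants mod $p^n$ becomes induced from the stabilizer of a connected component with trivial stabilizer action, and the $\Gamma$-invariants have rank $d$ by the usual induced-module computation.

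Net comparison: your route costs a separate descent-to-$K$ argument in Stage 1 and a component-bookkeeping argument in Stage 2, both of which the paper absorbs into the single $\Gamma$-equivariant construction of $S_n$ plus one application of Theorem~\ref{T:field of norms specific}. The paper's approach is the more economical one and, once Theorem~\ref{T:field of norms specific} is available, arguably the more transparent, since the key point---that trivializing both $\varphi$ and $\Gamma$ is a finite \'etale condition over $K$ itself---is made explicit rather than reconstructed in two pieces.
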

\begin{proof}
Let $\be_1,\dots,\be_d$ be a basis of $M$, and define $A \in \GL_d(\tilde{\bA}_K)$ by
$\varphi(\be_j) = \sum_i A_{ij} \be_i$. For each $\gamma \in \Gamma$, define
$G_\gamma \in \GL_d(\tilde{\bA}_K)$ by 
$\gamma(\be_j) = \sum_i G_{\gamma,ij} \be_i$; then the fact that $\varphi \circ \gamma = \gamma \circ \varphi$
implies that $A \varphi(G_\gamma) = G_\gamma \gamma(A)$. Put
\begin{align*}
R_n &= (\tilde{\bA}_K/(p))[X^{p^{-\infty}}_{ij,k}: i,j=1,\dots,d; k=0,\dots,n-1] \\
S_n &= W(R_n)/(p^n, \varphi^m(A \varphi(X) - X) \,
(m \in \ZZ)),
\end{align*}
where $X$ denotes the matrix with $X_{ij} = \sum_{k=0}^{n-1} p^k [X_{ij,k}]$.
Note that $S_n$ carries an action of $\Gamma$ with $\gamma \in \Gamma$ sending $X$ to $G_\gamma \gamma(X)$.

It can be shown that $S_n$ is finite \'etale over $\tilde{\bA}_K/(p^n)$. In the case $n=1$,
this reduces to observing that 
\[
\tilde{\bA}_K/(p)[X_{ij}:i,j=1,\dots,d]/(A \varphi(X)-X)
\]
is \'etale because the derivative of $\varphi$ is 0. For $n > 1$, an induction argument reduces one to
checking that Artin-Schreier equations define finite \'etale algebras in characteristic $p$.

Thus via Theorem~\ref{T:field of norms specific},
$S_n$ corresponds to a finite \'etale algebra over $K$,
any connected component of which has the desired effect.
\end{proof}

We now arrive at Fontaine's original theorem on $(\varphi, \Gamma)$-modules
\cite{fontaine-phigamma}.
\begin{theorem} \label{T:Fontaine}
The following categories are equivalent.
\begin{enumerate}
\item[(a)]
The category of continuous representations of $G_K$ on finite free $\Zp$-modules.
\item[(b)]
The category of \'etale $(\varphi, \Gamma)$-modules over $\tilde{\bA}_K$.
\item[(c)]
The category of \'etale $(\varphi, \Gamma)$-modules over $\bA_K$.
\end{enumerate}
More precisely, the functor from (c) to (b) is base extension.
\end{theorem}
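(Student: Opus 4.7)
The plan is to establish (a) $\Leftrightarrow$ (b) via a universal period ring together with the ``Hilbert 90'' consequences of Lemma~\ref{L:trivialize}, and then separately to show that the base extension functor (c) $\to$ (b) is an equivalence by descending modulo $p$.

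\emph{Setup and (a) $\Leftrightarrow$ (b).} Let $\tilde{\bA}$ denote the $p$-adic completion of $\varinjlim_L \tilde{\bA}_L$, where $L$ ranges over finite extensions of $K$ inside a fixed algebraic closure $\overline{K}$. By Theorem~\ref{T:field of norms specific} applied to this ind-system, $\tilde{\bA}$ acquires commuting continuous actions of $G_K$, $\varphi$, and $\Gamma$, with $\tilde{\bA}^{H_K} = \tilde{\bA}_K$ for $H_K = \ker(G_K \to \Gamma)$ and $\tilde{\bA}^{\varphi=1} = \Zp$. Introduce
\[
D(T) = (T \otimes_{\Zp} \tilde{\bA})^{H_K}, \qquad V(M) = (M \otimes_{\tilde{\bA}_K} \tilde{\bA})^{\varphi=1}.
\]
The main technical claim is that $V(M) \otimes_{\Zp} \tilde{\bA} \to M \otimes_{\tilde{\bA}_K} \tilde{\bA}$ is an isomorphism of finite free $\tilde{\bA}$-modules for every \'etale $(\varphi, \Gamma)$-module $M$ over $\tilde{\bA}_K$. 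Reduce modulo $p^n$; by Lemma~\ref{L:trivialize} there is a finite extension $L/K$ over which $M \otimes \tilde{\bA}_L/(p^n)$ becomes $(\varphi, \Gamma)$-trivial, so its $\varphi$-invariants form a free $\ZZ/p^n\ZZ$-module of the correct rank, and the inverse limit yields the claim. The dual statement $D(T) \otimes_{\tilde{\bA}_K} \tilde{\bA} \cong T \otimes_{\Zp} \tilde{\bA}$ follows by Galois descent along the pro-Galois inclusion $\tilde{\bA}_K \subset \tilde{\bA}$, once again controlled by Theorem~\ref{T:field of norms specific}. Continuity of $G_K$ on $V(M)$ is automatic because $V(M)$ sits inside $M \otimes \tilde{\bA}$ as a finitely generated $\Zp$-submodule on which $G_K$ acts continuously.

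\emph{Equivalence (b) $\Leftrightarrow$ (c) via base extension.} Faithfulness of base extension is clear from $\bA_K \hookrightarrow \tilde{\bA}_K$. For fullness and essential surjectivity, reduce modulo $p$: $\tilde{\bA}_K/(p)$ is the completion of the perfect closure of $\bA_K/(p)$, and their absolute Galois groups agree by Theorem~\ref{T:field of norms specific} (or Remark~\ref{R:complete fields} combined with Theorem~\ref{T:field of norms1}); hence the categories of \'etale $\varphi$-modules over the two rings coincide via base extension. Thus any \'etale $\varphi$-module $\bar{M}$ over $\tilde{\bA}_K/(p)$ descends uniquely to $\bar{M}_0$ over $\bA_K/(p)$. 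Lift $\bar{M}_0$ inductively through $\bA_K/(p^n)$ using that \'etale $\varphi$-module deformations are unobstructed (the obstruction vanishes because Frobenius acts bijectively), and take the inverse limit to obtain $M_0$ over $\bA_K$. By uniqueness of each lift, the $\Gamma$-action on $M$ preserves $M_0$ and remains continuous.

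\emph{Main obstacle.} The delicate step is the mod-$p$ descent from $\tilde{\bA}_K/(p)$ to $\bA_K/(p)$: the former is perfect while the latter is not, so a $\tilde{\bA}_K/(p)$-basis of $\bar{M}$ generically involves fractional powers of the uniformizer that cannot be descended literally. The existence of a descended basis is an abstract consequence of the equivalence of absolute Galois groups, but realizing it concretely requires either iterating $\varphi^{-1}$ and truncating (working inside $\bigcup_n \varphi^{-n}(\bA_K/(p))$) or invoking Katz's dictionary between \'etale $\varphi$-modules and Galois representations. Once the mod-$p$ descent is in hand, lifting through $\bA_K/(p^n)$, checking $\Gamma$-compatibility, and verifying that the composition (c) $\to$ (b) $\to$ (a) agrees with a directly constructed equivalence (a) $\to$ (c) are all formal.
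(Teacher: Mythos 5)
Your proposal takes a genuinely different route from the paper. You split the problem into two independent equivalences: (a) $\Leftrightarrow$ (b) via a universal period ring $\tilde{\bA}$ with the classical $D$/$V$ functors, and (b) $\Leftrightarrow$ (c) via mod-$p$ descent plus lifting. The paper instead constructs (a) $\to$ (c) by finite Galois descent along $\bA_K \to \bA_L$, constructs (b) $\to$ (a) by taking inverse limits of the finite-level trivializations $T_n = (M \otimes \tilde{\bA}_L/(p^n))^{\varphi,\Gamma}$ from Lemma~\ref{L:trivialize}, and then verifies that composing around the circle is the identity; the equivalence (c) $\Leftrightarrow$ (b) is thus obtained indirectly. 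The paper's route never encounters the perfect/imperfect descent problem that your argument must confront head-on, and never needs to introduce or analyze the universal ring $\tilde{\bA}$.

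Your (b) $\Leftrightarrow$ (c) step has a real gap beyond the one you flag. You acknowledge the subtlety in the mod-$p$ descent, but the subsequent lifting argument is also incomplete. ``\'Etale $\varphi$-module deformations are unobstructed'' tells you that $\bar{M}_0$ admits \emph{some} lift to $\bA_K/(p^n)$, but these lifts are far from unique, and an arbitrary choice will not base-extend to the given $M/p^nM$ over $\tilde{\bA}_K/(p^n)$. Producing the right lift requires a correction step (comparing the torsors of deformations over $\bA_K/(p)$ and $\tilde{\bA}_K/(p)$ via $\varphi$-cohomology, e.g.\ $\mathrm{coker}(\varphi - 1)$ on $\mathrm{End}(\bar{M}_0) \otimes \bullet$), which is not supplied. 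For the same reason, ``by uniqueness of each lift, the $\Gamma$-action preserves $M_0$'' is not justified as written; you would need to first establish full faithfulness of base extension from $\bA_K/(p^n)$ to $\tilde{\bA}_K/(p^n)$, and only then does $\Gamma$-equivariance descend automatically (this is exactly the role played by Lemma~\ref{L:fully faithful1}/\ref{L:fully faithful2} in the paper's treatment of the overconvergent case). Two further, smaller mismatches: (i) your invocation of Lemma~\ref{L:trivialize} delivers $(\varphi,\Gamma)$-invariants over $\tilde{\bA}_L/(p^n)$, whereas your $V(M)$ is built from $\varphi$-invariants over $\tilde{\bA}/(p^n)$; these agree only after a short argument using that $(\tilde{\bA}/(p^n))^{\varphi} = \ZZ/p^n\ZZ$, which you should spell out. (ii) The identity $\tilde{\bA}^{H_K} = \tilde{\bA}_K$ holds for the \emph{conventional} $\tilde{\bA}_K$ (a single connected component), not for the paper's $\tilde{\bA}_K$, which per Remark~\ref{R:connected components} is the direct sum over connected components of $K \otimes_{\Qp} \Qp(\mu_{p^\infty})$ with $\Gamma$ permuting them; the argument works after translating conventions, but as stated the identification is off.
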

\begin{proof}
The functor from (a) to (c) is defined as follows.
Let $T$ be a finite free $\Zp$-module, and let $\tau: G_K \to \GL(T)$ be a continuous homomorphism.
For each positive integer $n$, the map $G_K \to \GL(T/p^nT)$ factors through $G_{L/K}$ for some finite Galois extension
$L$ of $K$.
Put
\[
M_n = (T \otimes_{\Zp} \bA_L/(p^n))^{G_{L/K}};
\]
then $\varphi$ and $\Gamma$ act on $\bA_L/(p^n)$ and hence on $M_n$,
and faithfully flat descent for modules (or a more elementary Galois descent) implies that
\begin{equation} \label{eq:isomorphism2}
M_n \otimes_{\bA_K} \bA_L \to T \otimes_{\Zp} \bA_L/(p^n)
\end{equation}
is an isomorphism.
Hence $M = \varprojlim M_n$ is an \'etale $(\varphi, \Gamma)$-module over $\bA_K$.

The functor from (b) to (a) is defined as follows.
Let $M$ be an \'etale $(\varphi, \Gamma)$-module over $\tilde{\bA}_K$.
For each positive integer $n$, choose a finite Galois extension $L$ of $K$
as in Lemma~\ref{L:trivialize} so that
\[
T_n = (M \otimes_{\tilde{\bA}_K} \tilde{\bA}_L/(p^n))^{\varphi,\Gamma}
\]
has the property that the natural map
\begin{equation} \label{eq:isomorphism1}
T_n \otimes_{\Zp} 
\tilde{\bA}_L \to M \otimes_{\tilde{\bA}_K} \tilde{\bA}_L/(p^n)
\end{equation}
is an isomorphism. Note that $G_{L/K}$ acts on $\tilde{\bA}_L$ and hence on $T_n$;
hence $T = \varprojlim T_n$ is a continuous representation of $G_K$.

Using the fact that \eqref{eq:isomorphism2} and \eqref{eq:isomorphism1} are isomorphisms,
it is straightforward
to check that composing around the circle always gives a functor naturally isomorphic to the identity functor
at the starting point.
This completes the proof.
\end{proof}

\subsection{Overconvergence and $(\varphi, \Gamma$)-modules, part 1}

\begin{remark} \label{R:hom}
For $* \in \{\tilde{\bA}, \bA, \tilde{\bA}^\dagger, \bA^\dagger\}$,
for two \'etale $\varphi$-modules (resp.\ \'etale $(\varphi, \Gamma)$-modules) over $*_K$,
we may view $\Hom_{*_K}(M, N)$ naturally as 
an \'etale $\varphi$-module (resp.\ an \'etale $(\varphi, \Gamma)$-module) over $*_K$
by imposing the conditions that
\[
\varphi(f)(\varphi(\be)) = \varphi(f(\be)), \qquad \gamma(f)(\gamma(\be)) = \gamma(f(\be))
\qquad (\gamma \in \Gamma, f \in \Hom_{*_K}(M,N), \be \in M).
\]
The morphisms $M \to N$ of $\varphi$-modules (resp.\ of $(\varphi, \Gamma)$-modules) then are precisely
the elements of $\Hom_{*_K}(M, N)$ fixed by $\varphi$ (resp.\ fixed by $\varphi$ and $\Gamma$).
\end{remark}

\begin{lemma} \label{L:fully faithful1}
Base extension of \'etale $\varphi$-modules which are trivial modulo $p$ from
$\tilde{\bA}_K^\dagger$ to $\tilde{\bA}_K$ is fully faithful.
\end{lemma}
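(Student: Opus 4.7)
By Remark~\ref{R:hom}, the lemma reduces to showing that for any étale $\varphi$-module $P$ over $\tilde{\bA}_K^\dagger$ which is trivial modulo $p$, the natural inclusion $P^\varphi \hookrightarrow (P \otimes_{\tilde{\bA}_K^\dagger} \tilde{\bA}_K)^\varphi$ is an equality. Injectivity is immediate from injectivity of $\tilde{\bA}_K^\dagger \hookrightarrow \tilde{\bA}_K$. For surjectivity, fix a basis $\be_1,\dots,\be_d$ of $P$ on which $\varphi$ acts by a matrix $A = I + pB$ with $B \in M_d(\tilde{\bA}_K^\dagger)$; after shrinking, choose $r_0 > 0$ with $B \in M_d(W^{r_0-}(L))$ componentwise on $L = \tilde{\bA}_K/(p)$. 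The problem becomes: every $x \in \tilde{\bA}_K^d$ satisfying $x = A\varphi(x)$ lies in $(\tilde{\bA}_K^\dagger)^d$.

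The plan is to bound the Teichm\"uller coordinates $\overline{x}_n \in L^d$ of $x = \sum_{n \geq 0} p^n [\overline{x}_n]$ inductively via an Artin--Schreier-type recurrence. Reducing modulo $p$ gives $\overline{x}_0 = \varphi(\overline{x}_0) \in L^d$, hence $\overline{x}_0 \in \Fp^d$ and $\left|\overline{x}_0\right|' \leq 1$. Expanding the identity $x - \varphi(x) = pB\varphi(x)$ via the Witt-vector subtraction and multiplication formulas of Remark~\ref{R:addition formula} and comparing $p^n$-level Teichm\"uller coordinates for $n \geq 1$ should yield an identity in $L^d$ of the shape
\[
\overline{x}_n^p - \overline{x}_n = R_n(\overline{x}_0,\dots,\overline{x}_{n-1};\overline{B}_0,\dots,\overline{B}_{n-1}),
\]
where $R_n$ is a universal polynomial whose degree structure is dictated by the $p$-shift coming from the factor of $p$ in $pB\varphi(x)$.

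In the perfect analytic field $L$, the Artin--Schreier estimate $\left|z\right|' \leq \max\{1,\left|z^p - z\right|'^{1/p}\}$ holds, and Lemma~\ref{L:Gauss norm} gives $\left|\overline{B}_k\right|' \leq p^{k/r_0} \left|B\right|_{r_0}^{1/r_0}$. Combining these, I would propagate norm bounds through the recurrence: each appearance of $\overline{x}_j$ inside $R_n$ is accompanied, via the $p$-shift, by a controlled drop in $p$-adic weight, so that the $1/p$-power in the Artin--Schreier estimate dominates. A careful induction then yields $\left|\overline{x}_n\right|' \leq p^{n/r}$ for some $r \in (0,r_0]$ depending on $\left|B\right|_{r_0}$, which is precisely the bound $\left|x\right|_r \leq 1$, placing $x \in W^{r-}(L)^d \subseteq (\tilde{\bA}_K^\dagger)^d$.

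The main obstacle is unpacking the Witt-vector formulas cleanly enough to extract the recurrence $R_n$ with bounds sharp enough for the Artin--Schreier iteration to close; the universal polynomials have both linear and higher-degree Witt-correction terms, and one must track their weight in the $\overline{x}_j$ carefully enough to verify that the induction is genuinely contracting. A structurally cleaner alternative would be to apply Lemma~\ref{L:trivialize} to pass to a finite Galois extension $L'/K$ over which the module trivializes, directly verify that the resulting trivializing matrix $X \in \GL_d(\tilde{\bA}_{L'})$ with $X = A\varphi(X)$ and $X \equiv I \pmod{p}$ in fact lies in $\GL_d(\tilde{\bA}_{L'}^\dagger)$, and then descend to $\tilde{\bA}_K^\dagger$ via the identification $(\tilde{\bA}_{L'}^\dagger)^{G_{L'/K}} = \tilde{\bA}_K^\dagger$.
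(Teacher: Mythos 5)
Neither branch of your proposal closes. The primary Artin--Schreier plan is a different route from the paper's, and it is plausible: reducing $x = A\varphi(x)$ modulo $p$ gives $\overline{x}_0 \in \Fp^d$, and the recurrence $\overline{x}_n^p - \overline{x}_n = R_n(\overline{x}_{<n},\overline{B}_{<n})$ together with the nonarchimedean Artin--Schreier estimate $\left|z\right|' \leq \max\{1,\left|z^p-z\right|'^{1/p}\}$ and the homogeneity bound of Remark~\ref{R:addition formula} can indeed be made to propagate a bound of the shape $\left|\overline{x}_n\right|' \leq p^{n/r}$ with $r$ on the order of $pr_0$, provided one also arranges $\left|B\right|_{r_0}$ to be small (which \eqref{eq:limsup} permits after shrinking $r_0$). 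But you explicitly leave the bookkeeping open (``the main obstacle is unpacking the Witt-vector formulas cleanly enough''), so as written this is a plan for a proof, not a proof; the two nontrivial points---that $\overline{x}_n$ appears only through $\overline{x}_n^p - \overline{x}_n$ once the iterated Witt carries are accounted for, and that the double constraint coming from $\overline{B}_j\overline{x}_l^p$ for $j+l\le n-1$ really stays below $p^{pn/r}$---are exactly the parts you defer.

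The ``cleaner alternative'' is closer to what the paper actually does, but it has a genuine gap. Lemma~\ref{L:trivialize} only trivializes $M \otimes \tilde{\bA}_L$ modulo a fixed power $p^n$; no finite Galois extension $L'/K$ produces a full $\varphi$-invariant basis of $M \otimes \tilde{\bA}_{L'}$, so the matrix $X \in \GL_d(\tilde{\bA}_{L'})$ with $X = A\varphi(X)$ that you want to analyze does not exist at that level. The paper instead passes to $W(L')$ for $L'$ an analytic field (isometrically) containing a connected component $L$ of $\tilde{\bA}_K/(p)$ over which the $\varphi$-module genuinely trivializes; it then proves the decisive estimate by an induction on $n$, building matrices $V_n \in \GL_d(W^{pr-}(L'))$ congruent to the change-of-basis matrix $U$ modulo $p^n$, using the relation $\varphi(U)=A^{-1}U$ together with $\left|A-1\right|_r$ small to control each Teichm\"uller increment via $\left|X\right|_{pr}$ in terms of $\left|\varphi(X)-X\right|_r$. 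This overconvergence of $U$ is the crux, and your sketch does not supply it. Finally, because $L'$ is not a finite extension of $K$, the descent step is not Galois descent but rather the intersection $W(L) \cap W^\dagger(L') = W^\dagger(L)$ inside $W(L')$, applied componentwise; your proposed identification $(\tilde{\bA}_{L'}^\dagger)^{G_{L'/K}} = \tilde{\bA}^\dagger_K$ does not apply here.
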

\begin{proof}
By Remark~\ref{R:hom}, this reduces to checking that if $M$ is an \'etale $\varphi$-module over 
$\tilde{\bA}_K^\dagger$ which is trivial modulo $p$, then 
\[
M^{\varphi} = (M \otimes_{\tilde{\bA}_K^\dagger} \tilde{\bA}_K)^{\varphi}.
\]
Let $\be_1,\dots,\be_d$ be a basis of $M$ which is fixed modulo $p$.
Let $L$ be a connected component of $\tilde{\bA}_K/(p)$. Using Theorem~\ref{T:Fontaine},
we can produce an analytic field $L'$ with an isometric embedding $L \hookrightarrow L'$ for which
$M \otimes_{\tilde{\bA}_K^\dagger} W(L')$ admits a $\varphi$-invariant basis
$\be'_1,\dots,\be'_d$.

Let $A \in \GL_d(\tilde{\bA}_K^\dagger)$ be given by $\varphi(\be_j) = \sum_i A_{ij} \be_i$;
then $A-1$ is divisible by $p$. By \eqref{eq:limsup},
we can choose $r>0$ so that $\left|A-1\right|_r < p^{-1/2}$.
Let $U \in \GL_d(W(L))$ be given by $\be'_j = \sum_i U_{ij} \be_i$.
We claim that for each positive integer $n$, $U$ is congruent modulo $p^n$ to some
$V_n \in \GL_d(W^{pr-}(L'))$ with $\left|V_n-1\right|_r, \left|V_n-1\right|_{pr} \leq p^{-1/2}$. This is clear for $n=1$ by taking $V_n=1$.
Given the claim for some $n$, $U$ is congruent modulo $p^{n+1}$ to a matrix $V_n + p^n X$ in which
each entry $X_{ij}$ is a Teichm\"uller lift. We have
\[
\varphi(X) - X \equiv p^{-n}(V_n - \varphi(V_n) - (A-1)\varphi(V_n)) \pmod{p},
\]
which implies that $\left|\varphi(X_{ij}) - X_{ij}\right|_r \leq p^{n-1/2}$. From this it follows in turn that
\[
\left|X_{ij}\right|_{pr} \leq p^{n-1/2}.
\]
We may then take $V_{n+1} = V_n + p^n X$ for the desired effect.

{}From the previous paragraph, it follows that 
$\be'_1,\dots,\be'_d$ form a basis of $M \otimes_{\tilde{\bA}_K^\dagger} W^\dagger(L')$.
By expressing a $\varphi$-invariant element of $M$ using this basis
and noting that
\[
W(L')^{\varphi} = W((L')^{\varphi}) = \ZZ_p \subseteq 
W^\dagger(L')^{\varphi},
\]
we see that
the image of $(M \otimes_{\tilde{\bA}_K^\dagger} \tilde{\bA}_K)^{\varphi}$
in $M \otimes_{\tilde{\bA}_K^\dagger} W(L')$ is contained in
$M \otimes_{\tilde{\bA}_K^\dagger} W^\dagger(L')$.
Because
$W(L) \cap W^\dagger(L') = W^\dagger(L)$ and $M$ is a free module,
we may further conclude that the image lands in
$M \otimes_{\tilde{\bA}_K^\dagger} W^\dagger(L)$. 
Since this is true for each $L$, we deduce that 
$(M \otimes_{\tilde{\bA}_K^\dagger} \tilde{\bA}_K)^{\varphi} \subseteq M$, as desired.
\end{proof}

\begin{lemma} \label{L:keep continuity}
Let $M^\dagger$ be an \'etale $\varphi$-module over $\tilde{\bA}_K^\dagger$ which is trivial modulo $p$,
and suppose that
$M^\dagger \otimes_{\tilde{\bA}_K^\dagger} \tilde{\bA}_K$ carries a semilinear action of $\Gamma$ 
which commutes with $\varphi$
and is continuous for the weak topology.
Then the action of $\Gamma$ on $M^\dagger$ provided by Lemma~\ref{L:fully faithful1} is continuous for the LF topology.
\end{lemma}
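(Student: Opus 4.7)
The plan is to parametrize the $\Gamma$-action on $M^\dagger$ by matrices $G_\gamma \in \GL_d(\tilde{\bA}_K^\dagger)$ and prove LF continuity of $\gamma \mapsto G_\gamma$ at $\gamma = 1$ (which suffices by the group structure of $\Gamma$). Choose a basis $\be_1,\ldots,\be_d$ of $M^\dagger$ whose $\varphi$-matrix $A$ satisfies $A \equiv I \pmod p$ (possible since $M^\dagger$ is trivial mod $p$), and by \eqref{eq:limsup} fix $r_0 > 0$ with $\left|A - I\right|_{r_0} \leq p^{-1/2}$; then $\left|A\right|_s, \left|A^{-1}\right|_s, \left|\gamma(A)\right|_s, \left|\gamma(A)^{-1}\right|_s \leq 1$ for $s \in (0, r_0]$ and every $\gamma \in \Gamma$, using that each $\gamma$ is isometric for $\left|\bullet\right|_s$. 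The commutation $\varphi \circ \gamma = \gamma \circ \varphi$ translates to $A\,\varphi(G_\gamma) = G_\gamma\,\gamma(A)$, equivalently
\[
G_\gamma = \varphi^{-1}(A^{-1}) \cdot \varphi^{-1}(G_\gamma) \cdot \varphi^{-1}(\gamma(A)).
\]

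The first step is to establish a uniform LF bound. Since $\varphi^{-1}$ carries $W^{s-}(L)$ onto $W^{ps-}(L)$, the displayed equation upgrades $G_\gamma \in W^{s-}$ to $G_\gamma \in W^{p\min(r_0,s)-}$; starting from the radius of convergence supplied by Lemma~\ref{L:fully faithful1}, finitely many iterations yield $G_\gamma \in W^{pr_0-}(L)$, uniformly in $\gamma$. Taking $\left|\bullet\right|_{pr_0}$ of the equation gives $\left|G_\gamma\right|_{pr_0} \leq \left|G_\gamma\right|_{r_0}$, while convexity \eqref{eq:convex1} gives $\left|G_\gamma\right|_{r_0} \leq \left|G_\gamma\right|_{pr_0}^{1/p}$; combining, $\left|G_\gamma\right|_{pr_0} \leq 1$ uniformly, and setting $H_\gamma = G_\gamma - I$ we obtain $\left|H_\gamma\right|_s \leq 1$ for $s \in (0, pr_0]$.

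The second step exploits weak continuity to show $\left|H_\gamma\right|_s \to 0$ for $s \in (0, r_0)$. Weak continuity gives $\overline{(G_\gamma)_{ij,n}} \to \overline{I_{ij,n}}$ in $L$ as $\gamma \to 1$; the reduction $\overline{(G_\gamma)_{ij,0}}$ lies in $L^\varphi$ (by $\varphi$-invariance of $G_\gamma$ modulo $p$), a discrete subset of $L$, so it is locally constant in $\gamma$ and equal to $\delta_{ij}$ on some open subgroup $\Gamma_0 \subseteq \Gamma$. Fix $s \in (0, r_0)$ and $\epsilon > 0$: the uniform bound $\left|H_\gamma\right|_{r_0} \leq 1$ forces $p^{-n}\left|\overline{(H_\gamma)_{ij,n}}\right|^s \leq p^{-n(1 - s/r_0)}$, so the supremum defining $\left|H_\gamma\right|_s$ is controlled, up to error $\epsilon$, by finitely many coordinates, each of which tends to $0$ in $L$ by weak continuity; hence $\left|H_\gamma\right|_s < \epsilon$ on an open neighborhood of $1$ in $\Gamma_0$. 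Finally, applying $\left|\bullet\right|_s$ to the identity $\varphi(H_\gamma) = A^{-1} H_\gamma \gamma(A) + A^{-1}(\gamma(A) - A)$ yields $\left|H_\gamma\right|_{ps} \leq \max\{\left|H_\gamma\right|_s, \left|\gamma(A) - A\right|_s\}$ for $s \in (0, r_0]$; applied with $s = r_0/p$ and then $s = r_0$, combined with LF continuity of $\gamma \mapsto \gamma(A)$ on $\tilde{\bA}_K^\dagger$ (Definition~\ref{D:gamma}) applied to the finitely many entries of $A$, this gives $\left|H_\gamma\right|_{pr_0}$ arbitrarily small on a suitable open neighborhood of $1$.

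The main obstacle is bridging the gap between weak (coordinate-wise) convergence of $G_\gamma$ and smallness in the LF norms $\left|\bullet\right|_s$. Without the uniform bound $\left|G_\gamma\right|_{pr_0} \leq 1$ from the first step, one has no tail control on the supremum defining $\left|H_\gamma\right|_s$ and cannot convert coordinate convergence into norm convergence; with it, the conversion works for $s$ strictly below $r_0$, and the $\varphi$-equivariance is used a final time to promote smallness at the sub-optimal radius $r_0/p$ up to the desired radius $pr_0$.
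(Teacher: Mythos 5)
Your argument is correct but takes a genuinely different route from the paper's. Kedlaya's own proof is a one-paragraph reduction: retaining the auxiliary overfield $L'$ from the proof of Lemma~\ref{L:fully faithful1}, he observes that $\Gamma$ acts weakly-continuously on the finite free $\Zp$-module $(M^\dagger \otimes W^\dagger(L'))^\varphi$; since the weak, $p$-adic, and LF topologies all coincide on a $\Zp$-lattice, the $\Gamma$-action on the $\varphi$-invariant basis is automatically LF-continuous, and this propagates back to $M^\dagger$. You avoid the base change to $W^\dagger(L')$ entirely and argue directly with the matrix $G_\gamma$. The key novelty is your use of the $\varphi$-equivariance relation $G_\gamma = \varphi^{-1}(A^{-1} G_\gamma\, \gamma(A))$: since $\left|\varphi^{-1}(x)\right|_{pr} = \left|x\right|_r$, this bootstraps the Gauss-norm radius of $G_\gamma$ to the $\gamma$-independent value $p r_0$, and then the Hadamard inequality \eqref{eq:convex1} together with $\left| G_\gamma \right|_{pr_0} \leq \left| G_\gamma \right|_{r_0} \leq \left| G_\gamma \right|_{pr_0}^{1/p}$ forces $\left| G_\gamma \right|_{pr_0} \leq 1$ uniformly. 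This a priori bound is exactly what converts coordinate-wise weak convergence into $\left|\bullet\right|_s$-convergence, which is the crux of the lemma. What your approach buys is a more elementary, self-contained matrix computation that never leaves the ring $\tilde{\bA}^\dagger_K$; what the paper's approach buys is brevity and conceptual clarity (the real reason LF continuity comes for free is that the underlying $\Zp$-lattice of the representation already has a unique reasonable topology).

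Two remarks on the details. First, the final paragraph is redundant: your middle step, applied with the already-established bound $\left|H_\gamma\right|_{pr_0} \leq 1$ in place of the weaker $\left|H_\gamma\right|_{r_0} \leq 1$, yields $\left|H_\gamma\right|_s \to 0$ directly for all $s \in (0,pr_0)$, which is LF convergence of $\gamma \mapsto G_\gamma$ and hence (by semilinearity) of the action on $M^\dagger$; no further $\varphi$-bootstrap is needed. Second, as written the final paragraph has a small gap: LF continuity of $\gamma \mapsto \gamma(A)$ guarantees $\left|\gamma(A) - A\right|_{r_1} \to 0$ for \emph{some} $r_1 > 0$, which need not equal or exceed $r_0$ or $r_0/p$; to deduce $\left|\gamma(A) - A\right|_{r_0} \to 0$ one must either shrink $r_0$ after the fact (to satisfy $r_0 \leq r_1$) or interpolate via convexity of $\log\left|\bullet\right|_r$ using a uniform bound at some radius above $r_0$. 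Since the final paragraph is dispensable anyway, neither issue affects the correctness of the main line of argument.
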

\begin{proof}
Retain notation as in the proof of Lemma~\ref{L:fully faithful1}. Note that $\Gamma$ acts on
$M^\dagger \otimes_{\tilde{\bA}_K^\dagger} W(L')$ continuously for the weak topology.
It thus acts on
$(M^\dagger \otimes_{\tilde{\bA}_K^\dagger} W(L'))^{\varphi} =
(M^\dagger \otimes_{\tilde{\bA}_K^\dagger} W^\dagger(L'))^{\varphi}$ continuously for the weak topology.
However, the latter is a finite free $\Zp$-module on which the weak and LF topologies coincide.
We thus obtain a continuous action of $\Gamma$ on $M^\dagger \otimes_{\tilde{\bA}_K^\dagger} W^\dagger(L')$
for the LF topology. Since this is true for each $L$, the action of $\Gamma$ on
$M^\dagger$ is continuous for the LF topology, as desired.
\end{proof}

\begin{lemma} \label{L:good basis}
Let $L$ be an analytic field which is perfect of characteristic $p$, with norm $\left|\bullet \right|'$.
Let $M$ be a finite free $W(L)$-module equipped with an \'etale semilinear $\varphi$-action
and admitting a basis which is fixed by $\varphi$ modulo $p$.
Then there exists a basis of $W(L)$ on which $\varphi$ acts via an invertible matrix over $W^\dagger(L)$.
\end{lemma}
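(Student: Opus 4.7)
The plan is to produce the desired basis as a $p$-adic limit, by an iterative construction in the spirit of Lemma~\ref{L:fully faithful1}. Let $A \in \GL_d(W(L))$ be the matrix of action of $\varphi$ on the given basis $\be_1,\dots,\be_d$; by hypothesis $A \equiv 1 \pmod p$. Fix any $r \in (0,1)$. I will inductively construct matrices $U_n \in \GL_d(W(L))$ (with $U_n \equiv 1 \pmod p$) and $B_n \in M_d(W^{r-}(L))$ (with $B_n \equiv 1 \pmod p$) such that $A_n := U_n^{-1} A \varphi(U_n)$ satisfies $A_n \equiv B_n \pmod{p^{n+1}}$; taking limits will then give the result.

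The key inductive step runs as follows. Starting from $U_0 = B_0 = 1$, suppose $U_n$ and $B_n$ have been constructed, and write $A_n = B_n + p^{n+1} R_n$ with $R_n \in M_d(W(L))$; let $\overline{R}_n \in M_d(L)$ denote its reduction modulo $p$. Apply the following Artin-Schreier-style reduction entry by entry to $\overline{R}_n$: for $s \in L$, take $e := 0$ if $|s|' \leq 1$, and $e := (-s)^{1/p}$ otherwise (which lies in $L$ because $L$ is perfect), then replace $s$ by $s + e^p - e$ and repeat. Since $|s + e^p - e|' = |{-}e|' = |s|'^{1/p}$ whenever $|s|' > 1$, after finitely many iterations the running value has norm at most $p$. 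Performing this on each entry of $\overline{R}_n$ yields $\overline{E}_n \in M_d(L)$ with $|\overline{R}_n + \varphi(\overline{E}_n) - \overline{E}_n|' \leq p$, where $\varphi$ on $M_d(L)$ denotes the entrywise Frobenius. Lift $\overline{E}_n$ entrywise to Teichm\"uller coordinates $E_n := [\overline{E}_n] \in M_d(W(L))$, set $S_n := [\overline{R}_n + \varphi(\overline{E}_n) - \overline{E}_n] \in M_d(W^{r-}(L))$, and define
\[
U_{n+1} := U_n(1 + p^{n+1} E_n), \qquad B_{n+1} := B_n + p^{n+1} S_n.
\]
A direct computation using $A_n \equiv 1 \pmod p$ gives $A_{n+1} \equiv A_n + p^{n+1}(\varphi(E_n) - E_n) \equiv B_{n+1} \pmod{p^{n+2}}$, completing the induction.

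By Lemma~\ref{L:Gauss norm}, $|B_{n+1} - B_n|_r = p^{-(n+1)} |\overline{R}_n + \varphi(\overline{E}_n) - \overline{E}_n|'^r \leq p^{r-n-1}$, so $(B_n)$ is Cauchy in $W^{r-}(L)$ and converges by Lemma~\ref{L:completeness} to some $B \in M_d(W^{r-}(L))$ with $|B-1|_r \leq p^{r-1} < 1$; hence $B$ is a unit in $M_d(W^{r-}(L))$ via the geometric series, so $B \in \GL_d(W^\dagger(L))$. Meanwhile $U_{n+1} - U_n \in p^{n+1} M_d(W(L))$, so $U_n$ converges $p$-adically to some $U \in \GL_d(W(L))$. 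Passing to the limit in $A_n \equiv B_n \pmod{p^{n+1}}$ yields $U^{-1} A \varphi(U) = B$, so the new basis $\sum_i U_{ij}\be_i$ has matrix of action $B$ over $W^\dagger(L)$.

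The main obstacle is the Artin-Schreier reduction, which is precisely where the perfectness of $L$ is essential: the equation $e^p = -s$ must be uniquely solvable in $L$ for every $s$. One also needs that this reduction terminates after finitely many iterations with a uniform norm bound independent of $n$; this works because $|s|' \mapsto |s|'^{1/p}$ strictly contracts toward $1$ whenever $|s|' > 1$, so the running value falls below any fixed threshold greater than $1$ (such as $p$) in finitely many steps, uniformly.
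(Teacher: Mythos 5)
Your proposal is correct and takes essentially the same approach as the paper: both iteratively conjugate by Teichm\"uller-lift corrections of the form $1 + p^{n+1}[\overline{E}_n]$ where $\overline{E}_n$ is a telescoping sum of inverse Frobenius iterates of the residual, and the overconvergent matrix $B$ is obtained as the limit of the approximations $B_n$ in $\left|\bullet\right|_r$. One small slip in the Artin--Schreier step: the condition ``take $e := 0$ if $|s|' \leq 1$'' should read ``if $|s|' \leq p$,'' since the sequence $|s|'^{1/p^k}$ decreases to $1$ but never reaches it when $|s|' > 1$; this is exactly the bound ($\leq p$) you invoke afterward, so the argument is unaffected.
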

\begin{proof}
Let $\be_1,\dots,\be_d$ be a basis of $M$ which is fixed modulo $p$,
and let $F \in \GL_d(W(L))$ be defined by $\varphi(\be_j) = \sum_i F_{ij} \be_i$.
We construct sequences of matrices $F_n, G_n$ such that $F_1 = F$, $G_1 = 1$,
$F_n-1$ has entries in $p W(L)$, $G_n$ has entries in $W^{1-}(L)$,
$\left|G_n-1\right|_1 < 1$, and $X_n = p^{-n}(F_n - G_n)$ has entries in $W(L)$.
Given $F_n$ and $G_n$, choose a nonnegative integer $m$ for which
$\left|\varphi^{-m}(\overline{X}_n)\right|' < p^{n/2}$,
put $\overline{Y}_n = -\sum_{h=1}^{m} \varphi^{-h}(\overline{X}_n)$,
and put
\[
U_{n} = 1 + p^n [\overline{Y}_n], \quad
F_{n+1} = U_{n}^{-1} F_n \varphi^d(U_{n}), \quad
G_{n+1} = G_n + p^n [\varphi^{-m}(\overline{X}_n)],
\]
where the Teichm\"uller map is applied to matrices entry by entry.
The product $U_1 U_2 \cdots$ converges $p$-adically to a matrix $U$ for which
$U^{-1} F \varphi^d(U)$ is equal to the $p$-adic limit of the $G_n$,
which is invertible over $W^{1-}(L)$.
Thus the vectors $\be'_j = \sum_i U_{ij} \be_i$ form a basis of the desired form.
\end{proof}

\begin{theorem} \label{T:overconvergent1}
Base extension of \'etale $(\varphi, \Gamma)$-modules from
$\tilde{\bA}_K^\dagger$ to $\tilde{\bA}_K$ is an equivalence of categories. Consequently (by Theorem~\ref{T:Fontaine}),
both categories are equivalent to the category of continuous representations of $G_K$ on finite free $\Zp$-modules.
\end{theorem}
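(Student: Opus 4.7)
The strategy is to bootstrap from the trivial-mod-$p$ special cases handled by Lemmas~\ref{L:fully faithful1}, \ref{L:keep continuity}, and \ref{L:good basis} to the general case by first passing to a finite Galois extension $L/K$ via Lemma~\ref{L:trivialize}, then descending with respect to the finite \'etale Galois extension $\tilde{\bA}_K^\dagger \to \tilde{\bA}_L^\dagger$ supplied by Theorem~\ref{T:field of norms specific}.

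\emph{Full faithfulness.} For étale $(\varphi, \Gamma)$-modules $M, N$ over $\tilde{\bA}_K^\dagger$, Remark~\ref{R:hom} realizes morphisms in either category as the $(\varphi, \Gamma)$-invariants of the internal Hom. I would apply Lemma~\ref{L:trivialize} (with $n = 1$) to the internal Hom over $\tilde{\bA}_K$ to find a finite Galois extension $L/K$ over which this module becomes trivial modulo $p$ as a $\varphi$-module. Lemma~\ref{L:fully faithful1} then identifies its $\varphi$-invariants over $\tilde{\bA}_L^\dagger$ and over $\tilde{\bA}_L$; successive passage to $\Gamma$-invariants and then to $G_{L/K}$-invariants yields the desired identification of Hom-sets over $\tilde{\bA}_K^\dagger$ and $\tilde{\bA}_K$.

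\emph{Essential surjectivity.} Given an étale $(\varphi, \Gamma)$-module $M$ over $\tilde{\bA}_K$, I would use Lemma~\ref{L:trivialize} to pass to a finite Galois extension $L/K$ after which $M_L := M \otimes_{\tilde{\bA}_K} \tilde{\bA}_L$ admits a basis fixed by $\varphi$ modulo $p$. Decomposing $\tilde{\bA}_L = \bigoplus W(\tilde{L})$ over its connected components and applying Lemma~\ref{L:good basis} componentwise, I obtain a basis of $M_L$ on which $\varphi$ acts through a matrix in $\GL_d(\tilde{\bA}_L^\dagger)$ (still congruent to the identity modulo $p$, since the matrices $G_n$ produced in that proof satisfy $G_n \equiv 1 \pmod p$ by induction). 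Let $M_L^\dagger$ denote the $\tilde{\bA}_L^\dagger$-span of this basis; it is an étale $\varphi$-module over $\tilde{\bA}_L^\dagger$, trivial modulo $p$, with $M_L^\dagger \otimes_{\tilde{\bA}_L^\dagger} \tilde{\bA}_L \cong M_L$. By Lemma~\ref{L:keep continuity}, the weakly continuous $\Gamma$-action on $M_L$ preserves $M_L^\dagger$ and is LF-continuous there. The $G_{L/K}$-semilinear automorphisms of $M_L$ encoding the descent data from $\tilde{\bA}_L$ to $\tilde{\bA}_K$ commute with $\varphi$ and $\Gamma$, so by the full faithfulness established in the previous paragraph they descend to $G_{L/K}$-semilinear automorphisms of $M_L^\dagger$, with the cocycle relation preserved by uniqueness of descent. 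Finite \'etale Galois descent along $\tilde{\bA}_K^\dagger \to \tilde{\bA}_L^\dagger$ then produces $M^\dagger := (M_L^\dagger)^{G_{L/K}}$, an étale $(\varphi, \Gamma)$-module over $\tilde{\bA}_K^\dagger$ whose base extension to $\tilde{\bA}_K$ recovers $M$.

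\emph{Main obstacle.} The delicate point in both parts is ensuring that the various group actions genuinely stabilize the overconvergent submodule $M_L^\dagger$, rather than only lifting abstractly after extending scalars back to $\tilde{\bA}_L$; this is precisely where Lemma~\ref{L:fully faithful1} does the essential work, directly for $\Gamma$ through Lemma~\ref{L:keep continuity} and indirectly for $G_{L/K}$ through the full faithfulness just proved. One must also confirm that $\tilde{\bA}_L^\dagger/\tilde{\bA}_K^\dagger$ is a bona fide $G_{L/K}$-Galois finite étale extension suitable for faithfully flat descent, which follows from Theorem~\ref{T:field of norms specific} together with the fact that $\Gamma$, and hence $G_{L/K}$, permutes the connected components of the relevant rings transitively in a manner compatible with passage between $\tilde{\bA}$ and $\tilde{\bA}^\dagger$.
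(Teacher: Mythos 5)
Your proposal is correct and follows essentially the same route as the paper: reduce to the case of a module with a $\varphi$- (and $\Gamma$-) fixed basis modulo $p$ over a finite Galois extension $L/K$, then apply Lemmas~\ref{L:fully faithful1}, \ref{L:keep continuity}, and \ref{L:good basis}, and finally descend from $L$ to $K$. The small cosmetic differences are (i) you invoke Lemma~\ref{L:trivialize} directly with $n=1$ where the paper passes through Theorem~\ref{T:Fontaine} to obtain $L$, which amounts to the same thing since the proof of Theorem~\ref{T:Fontaine} already relies on Lemma~\ref{L:trivialize}; and (ii) you spell out the $G_{L/K}$-descent more explicitly, whereas the paper compresses this into the remark that Lemma~\ref{L:fully faithful1} supplies the descent data and, for full faithfulness, replaces the final passage to $G_{L/K}$-invariants with the intersection identity $\tilde{\bA}_L^\dagger \cap \tilde{\bA}_K = \tilde{\bA}_K^\dagger$ inside $\tilde{\bA}_L$. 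One micro-correction: when descending the $G_{L/K}$-action on $M_L$ to $M_L^\dagger$, the full faithfulness you invoke must be that of base extension from $\tilde{\bA}_L^\dagger$ to $\tilde{\bA}_L$ (i.e., the statement over $L$, not $K$); but since your full faithfulness argument works uniformly in the base field, this is only a matter of phrasing.
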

\begin{proof}
We first check that the base extension functor is fully faithful. Again 
by Remark~\ref{R:hom}, this reduces to checking that if $M$ is an \'etale $(\varphi, \Gamma)$-module over 
$\tilde{\bA}_K^\dagger$, then 
\[
M^{\varphi, \Gamma} = (M \otimes_{\tilde{\bA}_K^\dagger} \tilde{\bA}_K)^{\varphi, \Gamma}.
\]
By Theorem~\ref{T:Fontaine}, $M \otimes_{\tilde{\bA}_K^\dagger} \tilde{\bA}_K$ corresponds to a 
continuous representation of $G_K$ on a finite free $\Zp$-module $T$. We can then find a finite extension
$L$ of $K$ such that $G_L$ acts trivially on $T/pT$; this means that $M/pM \otimes_{\tilde{\bA}_K^\dagger/(p)}
\tilde{\bA}_L^\dagger/(p)$ admits a basis fixed by both $\varphi$ and $\Gamma$.
By Lemma~\ref{L:fully faithful1},
\[
(M \otimes_{\tilde{\bA}_K^\dagger} \tilde{\bA}_K)^{\varphi, \Gamma} \subseteq (M \otimes_{\tilde{\bA}_K^\dagger} \tilde{\bA}_L)^{\varphi, \Gamma}
= (M \otimes_{\tilde{\bA}_K^\dagger} \tilde{\bA}^\dagger_L)^{\varphi, \Gamma}.
\]
However, within $\tilde{\bA}_L$ we have
\[
\tilde{\bA}_L^\dagger \cap \tilde{\bA}_K = \tilde{\bA}^\dagger_K
\]
and likewise after tensoring with the finite free module $M$. Hence
$(M \otimes_{\tilde{\bA}_K^\dagger} \tilde{\bA}_K)^{\varphi, \Gamma}
\subseteq M^{\varphi, \Gamma}$
as desired.

It remains to check that  the base extension functor is essentially surjective.
For a given \'etale $(\varphi, \Gamma)$-module $M$ over $\tilde{\bA}_K$, to show that $M$ descends to
$\tilde{\bA}^{\dagger}_K$, it is enough to check that it descends to
$\tilde{\bA}^{\dagger}_L$ for some finite Galois extension $L$ of $K$
(as then Lemma~\ref{L:fully faithful1} provides the data needed to perform Galois descent back to $K$).
Consequently, using Theorem~\ref{T:Fontaine}, we may again reduce to the case where
$M$ admits a basis $\be_1,\dots,\be_d$ which is fixed by $\varphi$ and $\Gamma$ modulo $p$.

By Lemma~\ref{L:good basis} applied to each component of $\tilde{\bA}_K/(p)$,
we obtain an \'etale $\varphi$-module $M^\dagger$ over $\tilde{\bA}^\dagger_K$ and an isomorphism
$M^\dagger \otimes_{\tilde{\bA}^\dagger_K} \tilde{\bA}_K \cong M$ of $\varphi$-modules. By
Lemma~\ref{L:fully faithful1} and Lemma~\ref{L:keep continuity}, 
the action of $\Gamma$ on $M$ induces an action on $M^\dagger$ which
is continuous for the weak and LF topologies.
This yields the desired result.
\end{proof}

\begin{exercise} \label{exer:no continuity}
One can omit the requirement of continuity of the action of $\Gamma$ in the definition of an
\'etale $(\varphi, \Gamma)$-module, as it is implied by the other properties.
(Hint: first do the case over $\tilde{\bA}_K$, which implies the case over $\bA_K$.
Then use Theorem~\ref{T:overconvergent1} to deduce the case over $\tilde{\bA}^\dagger_K$,
which implies the case over $\bA^\dagger_K$.)
\end{exercise}

\subsection{More on the action of $\Gamma$}

We have seen that descent of \'etale $(\varphi, \Gamma)$-modules from $\tilde{\bA}_K$ to 
$\tilde{\bA}^\dagger_K$ can be achieved mainly using the bijectivity of $\varphi$. 
To make a similar passage from $\bA_K$ to $\bA^\dagger_K$, we trade the failure of this bijectivity
for better control of the action of $\Gamma$.

\begin{lemma} \label{L:gamma analytic}
There exists $c>0$ such that for $\overline{x} \in \bA_K/(p)$, for $n$ a positive integer, for
$\gamma \in 1 + p^n \Zp \subseteq \Gamma$,
\[
\left|(\gamma-1)(\overline{x})\right|' \leq cp^{-p^{n+1}/(p-1)} \left|\overline{x}\right|'.
\]
\end{lemma}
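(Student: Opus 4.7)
The plan is to first bound $|\gamma(\overline{\pi}) - \overline{\pi}|'$ in $\Fp((\overline{\pi})) = \bA_{\Qp}/(p)$ by an explicit binomial computation, propagate this bound to arbitrary Laurent series by a Taylor-type expansion, and finally extend the estimate to each connected component of $\bA_K/(p)$ via its finite separable structure.

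For the base case, write $\gamma = 1 + p^n w$ with $w \in \Zp$, so that in $\Zp\llbracket \pi \rrbracket$ we have $(1+\pi)^\gamma = (1+\pi) \cdot ((1+\pi)^{p^n})^w$. Reducing modulo $p$, the characteristic-$p$ identity $(1+\overline{\pi})^{p^n} = 1 + \overline{\pi}^{p^n}$ together with the binomial expansion of $(1+\overline{\pi}^{p^n})^w$ yields
\[
\gamma(\overline{\pi}) - \overline{\pi} = (1+\overline{\pi}) \sum_{k \geq 1} \binom{w}{k} \overline{\pi}^{p^n k},
\]
whence $|\gamma(\overline{\pi}) - \overline{\pi}|' \leq |\overline{\pi}|'^{p^n} = p^{-p^{n+1}/(p-1)}$. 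Setting $\varepsilon = \gamma(\overline{\pi}) - \overline{\pi}$, for $\overline{x} = \sum_i a_i \overline{\pi}^i$ the expansion $\gamma(\overline{x}) - \overline{x} = \sum_i a_i \overline{\pi}^i[(1+\varepsilon/\overline{\pi})^i - 1]$ combined with the characteristic-$p$ identity $(1+\eta)^{p^s m} = (1+\eta^{p^s})^m$ (which gives $|(1+\eta)^i - 1|' \leq |\eta|'$ for any nonzero integer $i$ when $|\eta|' < 1$) upgrades the estimate to $|(\gamma - 1)(\overline{x})|' \leq (|\varepsilon|'/|\overline{\pi}|') |\overline{x}|' \leq p^{p/(p-1)} p^{-p^{n+1}/(p-1)} |\overline{x}|'$.

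For general $K$, the ring $\bA_K/(p)$ decomposes as a finite direct sum of components $L = \mathbb{F}_q((\overline{\pi}_L))$ which $\Gamma$ permutes through a finite quotient, so there is some $m$ such that $1 + p^m \Zp$ stabilizes each component. For $1 \leq n < m$ the bound is automatic from $|(\gamma-1)(\overline{x})|' \leq |\overline{x}|'$ after enlarging $c$ to absorb the finitely many factors $p^{p^{n+1}/(p-1)}$. For $n \geq m$ and $\gamma$ stabilizing $L$, the total ramification of $\Qp(\mu_{p^\infty})/\Qp$ forces $\gamma$ to act trivially on the residue field, so the base-case argument applies verbatim on the unramified subfield $\mathbb{F}_q((\overline{\pi})) \subseteq L$. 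The remaining totally ramified extension $L/\mathbb{F}_q((\overline{\pi}))$ is Eisenstein of some degree $e$ with minimal polynomial $P(T)$ for $\overline{\pi}_L$. Writing $\varepsilon_L = \gamma(\overline{\pi}_L) - \overline{\pi}_L$ and using $P(\overline{\pi}_L) = 0$ and $\gamma(P)(\overline{\pi}_L + \varepsilon_L) = 0$, a Hensel/Newton iteration yields
\[
|\varepsilon_L|' \leq |(\gamma(P) - P)(\overline{\pi}_L)|' \,/\, |P'(\overline{\pi}_L)|'.
\]
The numerator is at most $c_0 p^{-p^{n+1}/(p-1)} |\overline{\pi}_L|'^e$ by applying the base case coefficient-wise and invoking the Eisenstein property, while $|P'(\overline{\pi}_L)|'$ is a fixed positive quantity by separability. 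Dividing and re-running the Laurent expansion of the base case inside $L$ produces $|(\gamma - 1)(\overline{x})|' \leq c_L p^{-p^{n+1}/(p-1)} |\overline{x}|'$ with $c_L$ depending on $L$; taking $c$ to be the maximum of the finitely many $c_L$ completes the proof.

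The main obstacle is controlling the denominator $|P'(\overline{\pi}_L)|'$ in the wildly ramified case ($p \mid e$), where it reflects a nontrivial different and may be much smaller than $|\overline{\pi}_L|'^{e-1}$; however, separability of $L/\mathbb{F}_q((\overline{\pi}))$ ensures it is positive, and since $K$ admits only finitely many components the resulting constants combine into a uniform $c$.
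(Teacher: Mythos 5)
Your proof is correct and takes essentially the same route as the paper: both establish the base case over $\Fp((\overline{\pi}))$ by the binomial computation on $(1+\overline{\pi})^\gamma$ and then transfer the estimate to each connected component $L$ via the relation satisfied by the minimal polynomial of a uniformizer $\overline{\pi}_L$, with $|P'(\overline{\pi}_L)|'$ absorbed into the constant. Your version is a bit more explicit than the paper's about the edge cases (small $n$, the unramified subfield $\mathbb{F}_q((\overline{\pi}))$, and components not stabilized by $\gamma$), which the paper handles by restricting attention to $n$ large and invoking continuity of the $\Gamma$-action, but these are presentational rather than substantive differences.
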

\begin{proof}
For $K = \Qp$, for $\overline{x} = \overline{\pi}$,
\[
(\gamma-1)(\overline{\pi}) = (\gamma-1)(1 + \overline{\pi}) = (1 + \overline{\pi})((1 + \overline{\pi})^{\gamma-1} - 1)
\]
is divisible by $\overline{\pi}^{p^n}$, so $\left|(\gamma-1)(\overline{\pi})\right|' \leq (\left|\overline{\pi}\right|')^{p^n} 
= p^{-p^{n+1}/(p-1)} \left|\overline{\pi}\right|'$. This implies the general result for $K = \Qp$ with $c=1$.

For general $K$, it is similarly sufficient to check the claim for $\overline{x} = \overline{\pi}_L$ a uniformizer of a connected component
$L$ of $\bA_K/(p)$. Let $P(T) = \sum_i P_i T^i$ be the minimal polynomial of $\overline{\pi}_L$ over $\Fp((\overline{\pi}))$,
and put $Q(T) = P(T+\overline{\pi}_L) = \sum_{i>0} Q_i T^i$ with $Q_1 \neq 0$. Then 
\[
0 = \gamma(P(\overline{\pi}_L))= (\gamma-1)(P)(\gamma(\overline{\pi}_L)) + P(\gamma(\overline{\pi}_L)) \\
= \sum_i (\gamma-1)(P_i) \gamma(\overline{\pi}_L)^i + Q(\gamma(\overline{\pi}_L)-\overline{\pi}_L).
\]
Since $\gamma$ acts continuously on $\bA_K/(p)$, $\left|(\gamma-1)(\overline{\pi}_L)\right|' \to 0$ as $n \to \infty$;
hence for large $n$, $\left|Q(\gamma(\overline{\pi}_L)-\overline{\pi}_L)\right|' = \left|Q_1\right|'\left|(\gamma-1)(\overline{\pi}_L)\right|'$. It follows that for large $n$,
\[
\left|(\gamma-1)(\overline{\pi}_L)\right|' = \left|Q_1^{-1}\right|' \left| \sum_i (\gamma-1)(P_i) \gamma(x)^i \right|' \leq p^{-p^{n+1}/(p-1)}
\left|Q_1^{-1}\right|' \max_i \{\left|P_i\right|' \left|\overline{\pi}_L^i\right|'\};
\]
this implies the desired result.
\end{proof}
\begin{remark} \label{R:gamma not analytic}
Note that Lemma~\ref{L:gamma analytic} is a far cry from what happens over $\tilde{\bA}_K/(p)$:
one cannot establish an inequality
of the form $\left|(\gamma-1)(x)\right|' \leq c\left|x\right|'$ uniformly over $x \in \tilde{\bA}_K/(p)$
for even a single choice of $\gamma \in \Gamma - \{1\}$ and $c<1$.
Namely, if one had such an inequality, then substituting $x^{1/p}$ in place of $x$ would immediately imply the same inequality with
$c$ replaced by $c^p$, which would ultimately force $\left|(\gamma-1)(x)\right|' = 0$ for all $x$. This issue is closely related to the question of identifying
\emph{locally analytic vectors} within $(\varphi, \Gamma)$-modules and their generalizations
\cite{berger-analytic}.
\end{remark}

\begin{lemma} \label{L:direct sum}
There exists $c > 0$ (depending on $K$) such that for any positive integer $m$
and any $\overline{x} = \sum_{e=0}^{p^m-1} (1 + \pi)^{e/p^m} \overline{x}_{e/p^m}$
with $\overline{x}_{e/p^m} \in \bA_K/(p)$, 
\[
\max_e\left\{\left|\overline{x}_{e/p^m}\right|'\right\} \leq c \left|\overline{x}\right|'.
\]
\end{lemma}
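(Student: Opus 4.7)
The plan is to reduce to the case $K = \Qp$ by first converting to a power basis, then invoking an orthonormal basis of $\bA_K/(p)$ over $\Fp((\overline{\pi}))$. Set $s := \overline{\pi}^{1/p^m} \in \tilde{\bA}_K/(p)$, so that $s^{p^m} = \overline{\pi}$ and, in characteristic $p$, $(1+\pi)^{e/p^m} = (1+s)^e = \sum_{k=0}^{e} \binom{e}{k} s^k$. The $p^m \times p^m$ matrix of binomial coefficients $\bigl(\binom{e}{k}\bigr)$ is unit upper-triangular over $\Fp$, so I would rewrite
\[
\overline{x} = \sum_{k=0}^{p^m-1} s^k z_k, \qquad z_k := \sum_{e \geq k} \binom{e}{k}\, \overline{x}_{e/p^m} \in \bA_K/(p),
\]
noting that the change of basis is $\Fp$-linear and therefore $\max_e |\overline{x}_{e/p^m}|' = \max_k |z_k|'$. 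It is then enough to produce $c > 0$, depending only on $K$, such that $\max_k |z_k|' \leq c \bigl|\sum_k s^k z_k\bigr|'$.

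I would next dispatch the case $K = \Qp$ by a direct calculation: here $\{1, s, \ldots, s^{p^m-1}\}$ is an orthonormal $F$-basis of $F_m := \Fp((s))$ over $F := \Fp((\overline{\pi}))$, and expanding $z_k = \sum_n a_{k,n} \overline{\pi}^n = \sum_n a_{k,n} s^{np^m}$ with $a_{k,n} \in \Fp$ shows that the coefficient of $s^{v^* p^m + k}$ in $\overline{x}$ equals $a_{k, v^*}$, where $v^* := \min_k v_{\overline{\pi}}(z_k)$. Since $(a_{k, v^*})_k$ is nonzero by definition of $v^*$, at least one such coefficient is nonzero, so $v_s(\overline{x}) \leq v^* p^m + (p^m - 1)$; this converts to $|\overline{x}|' \geq |s|'^{p^m-1} \max_k |z_k|' \geq |\overline{\pi}|' \max_k |z_k|'$, giving $c = |\overline{\pi}|'^{-1}$ in this case. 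For general $K$, I would choose an orthonormal $F$-basis $\{w_\alpha\}$ of $\bA_K/(p)$ by taking an $e$-$f$-basis $\{\pi_L^i \zeta_\beta\}$ on each connected component $L$, decompose $z_k = \sum_\alpha c_{k,\alpha} w_\alpha$ with $c_{k,\alpha} \in F$, and rewrite $\overline{x} = \sum_\alpha w_\alpha \overline{x}_\alpha$ where $\overline{x}_\alpha := \sum_k s^k c_{k,\alpha} \in F_m$. Applying the $K = \Qp$ case to each $\overline{x}_\alpha$ yields $\max_k |c_{k,\alpha}|' \leq |\overline{\pi}|'^{-1} |\overline{x}_\alpha|'$, and combining with an approximate orthonormality of $\{w_\alpha\}$ after base change --- namely some $c_1 = c_1(K) > 0$ with $\bigl|\sum_\alpha w_\alpha \overline{x}_\alpha\bigr|' \geq c_1 \max_\alpha |w_\alpha|' |\overline{x}_\alpha|'$ for all $\overline{x}_\alpha \in F_m$ --- yields the lemma with $c = (c_1 |\overline{\pi}|')^{-1}$.

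The main obstacle is the approximate-orthonormality claim: showing that orthonormality of $\{w_\alpha\}$ over $F$ extends, with a bounded loss depending only on $K$, to a lower bound of that form in $L \otimes_F F_m$ over $F_m$. In the tamely ramified case, when $p$ does not divide the ramification index of any component $L/F$, the valuations $|w_\alpha|'$ lie in distinct cosets of $|F_m^\times|'$ as $\alpha$ varies over the basis of a single component, forcing exact orthonormality and allowing $c_1 = 1$. In the wildly ramified case the cosets can coincide, permitting cancellation between basis elements in $L \otimes_F F_m$, but the maximum possible depth of such cancellation is controlled by the $p$-adic valuation of the ramification index of $L/F$, a quantity depending only on $K$ and not on $m$. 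A case analysis partitioning the basis elements by the valuation class of $|w_\alpha|'$ modulo $|F_m^\times|'$ and tracking the residues involved in the cancellations should yield a uniform $c_1 > 0$ independent of $m$, completing the proof.
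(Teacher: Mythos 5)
Your reduction—passing to powers of $s = \overline{\pi}^{1/p^m}$ via the unit-triangular binomial matrix (which preserves the supremum of norms because both the matrix and its inverse have entries in $\Fp$), settling $K = \Qp$ by inspection of $s$-adic valuations, and then decomposing each $z_k$ along an orthogonal $F$-basis $\{w_\alpha\}$ of each component $L$ of $\bA_K/(p)$—is algebraically sound and takes a genuinely different route from the paper. But the step you flag yourself is a real gap, and the fix you sketch is not convincing. The inequality $\bigl|\sum_\alpha w_\alpha \overline{x}_\alpha\bigr|' \geq c_1 \max_\alpha |w_\alpha|'\,|\overline{x}_\alpha|'$ with $c_1$ independent of $m$ is a uniform-in-$m$ bound on how far the orthogonal basis of $L/F$ is from remaining orthogonal over $F_m = \Fp((\overline{\pi}^{1/p^m}))$. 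In the wildly ramified case this failure is genuine (e.g., for the Eisenstein extension with $\overline{\pi}_L^p + \overline{\pi}_L^{p-1}\overline{\pi} + \overline{\pi} = 0$, the element $\overline{\pi}_L + s^{p^{m-1}}$ has strictly smaller valuation than $\overline{\pi}_L$), and the valuation-coset heuristic keyed to the $p$-part of the ramification index does not obviously control the depth of cancellation; ``tracking the residues'' is precisely the hard content and is not a terminating case analysis as stated.

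The gap is fillable within your framework: both norms you compare on $L \otimes_F F_m$ are, for every $m$, restrictions of a fixed pair of norms on the finite-dimensional vector space $L \otimes_F \hat{F}$ over $\hat{F}$, where $\hat{F}$ denotes the completion of the perfect closure of $F$ (one is $\max_\alpha |w_\alpha|'\,|\overline{x}_\alpha|$ with $\overline{x}_\alpha \in \hat{F}$; the other is the supremum of the unique multiplicative norm extensions to the components of $L \otimes_F \hat{F}$, which restrict to $|\cdot|'$ on $LF_m$ by uniqueness of norm extensions over the complete field $F_m$). Since $\hat{F}$ is complete, any two norms on a finite-dimensional $\hat{F}$-vector space are equivalent, giving a single $c_1$ that works for all $m$. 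The paper's argument avoids the base change altogether and is worth comparing: it proves only the $m=1$ case by equivalence of norms on the finite-dimensional $F$-vector space $\varphi^{-1}(\bA_K/(p))$, obtaining a constant $c_0$, and then bootstraps by applying that bound to $\overline{x}^{p^{m-1}}$ and the inductive bound to $p^{m-1}$-th roots, so the constant accumulates as $c_0^{1+1/p+\cdots+1/p^{m-1}}$, converging to $c_0^{p/(p-1)}$.
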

\begin{proof}
We first produce $c_0 \geq 1$ that satisfies the claim for $m=1$.
To do this, note that $\left|\overline{x}\right|'$ and $\max_e \{\left|\overline{x}_{e/p}\right|'\}$ both define norms on 
the finite-dimensional vector space $\varphi^{-1}(\bA_K/(p))$ over $\Fp((\overline{\pi}))$.
The claim then follows from the fact that any two norms on a finite-dimensional vector space over a complete
field are equivalent (e.g., see \cite[Theorem~1.3.6]{kedlaya-course}).

We next show by induction on $m$ that for any nonnegative integer $m$
and any $\overline{x} = \sum_{e=0}^{p^m-1} (1 + \pi)^{e/p^m} \overline{x}_{e/p^m}$
with $\overline{x}_{e/p^m} \in \bA_K/(p)$, 
\[
\max_e\left\{\left|\overline{x}_{e/p^m}\right|'\right\} \leq c_0^{1 + 1/p + \cdots + 1/p^{m-1}}  \left|\overline{x}\right|'.
\]
This is vacuously true for $m=0$. For $m>0$, if the claim is known for $m-1$,
then by the previous paragraph we can write $\overline{x}^{p^{m-1}} = \sum_{f=0}^{p-1} (1 + \pi)^{f/p} \overline{y}_{f/p}$
with $\overline{y}_{f/p} \in \bA_K/(p)$ and 
$\max_f \{\left|\overline{y}_{f/p}\right|'\} \leq c_0 \left|\overline{x}^{p^{m-1}}\right|'$.
By the induction hypothesis, we can then write
$\overline{y}_{f/p}^{p^{-m+1}} = \sum_{g=0}^{p^{m-1}-1} (1 + \pi)^{g/p^{m-1}} \overline{z}_{f/p, g/p^{m-1}}$ with $\overline{z}_{f/p, g/p^{m-1}} \in \bA_K/(p)$ and
\[
\max_g \left\{\left|\overline{z}_{f/p,g/p^{m-1}}\right|'\right\}  \leq c_0^{1+1/p + \cdots + 1/p^{m-2}} \left|\overline{y}_{f/p}^{p^{-m+1}}\right|'.
\]
For $e = 0,\dots,p^m-1$, write $e = p^{m-1} f + g$ with $f \in \{0,\dots,p-1\}$ and $g \in \{0,\dots,p^{m-1}-1\}$ and set
$\overline{x}_{e/p^m} = \overline{z}_{f/p,g/p^{m-1}}$; then
$\overline{x} = \sum_{e=0}^{p^m-1} (1 + \pi)^{e/p^m} \overline{x}_{e/p^m}$ and
\[
\max_e \left\{\left|\overline{x}_{e/p^m}\right|'\right\} \leq c_0^{1 + 1/p + \cdots + 1/p^{m-2}}  \max_f \left\{\left|\overline{y}_{f/p}^{p^{-m+1}}\right|'\right\}
\leq c_0^{1 + 1/p + \cdots + 1/p^{m-1}} \left|\overline{x}\right|'.
\]
This completes the induction; we may now take
\[
c = c_0^{1 + 1/p + 1/p^2 + \cdots} = c_0^{p/(p-1)}
\]
and deduce the desired result.
\end{proof}
\begin{cor} \label{C:direct sum}
Let $\overline{T} \subset \tilde{\bA}_K/(p)$ be the closure of the subgroup generated by
$(1 + \overline{\pi})^e \bA_K/(p)$ for all $e \in \ZZ[p^{-1}] \cap (0,1)$. Then the natural map
$\bA_K/(p) \oplus \overline{T} \to \tilde{\bA}_K/(p)$ is an isomorphism of Banach spaces over $\Fp((\overline{\pi}))$.
\end{cor}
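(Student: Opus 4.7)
The plan is to use Lemma~\ref{L:direct sum} to produce a bounded ``projection onto the $e=0$ component'' map, then combine it with a density argument to conclude.

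For each positive integer $m$, write $V_m \subseteq \tilde{\bA}_K/(p)$ for the image of the $\bA_K/(p)$-linear map $\bigoplus_{e=0}^{p^m-1} \bA_K/(p) \to \tilde{\bA}_K/(p)$ sending $(\overline{x}_{e/p^m})$ to $\sum_e (1+\overline{\pi})^{e/p^m} \overline{x}_{e/p^m}$. Since $\left|(1+\overline{\pi})^{e/p^m}\right|'=1$, the ultrametric inequality gives $\left|\sum_e (1+\overline{\pi})^{e/p^m} \overline{x}_{e/p^m}\right|' \leq \max_e \left|\overline{x}_{e/p^m}\right|'$, while Lemma~\ref{L:direct sum} supplies the reverse bound $\max_e \left|\overline{x}_{e/p^m}\right|' \leq c \left|\overline{x}\right|'$. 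Consequently this map is a topological embedding onto a closed subspace, and $V_m$ admits the internal direct-sum decomposition $V_m = \bA_K/(p) \oplus T_m$ with $T_m = \bigoplus_{e=1}^{p^m-1} (1+\overline{\pi})^{e/p^m} \bA_K/(p)$. Writing $e/p^m = (ep)/p^{m+1}$ shows that $V_m \subseteq V_{m+1}$ and $T_m \subseteq T_{m+1}$, compatibly with these decompositions, so in particular $\overline{T}$ equals the closure of the ascending union $\bigcup_m T_m$.

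Next, I would show that $\bigcup_m V_m$ is dense in $\tilde{\bA}_K/(p)$. Working in characteristic $p$, the identity $(1+\overline{\pi})^{1/p^m} = 1 + \overline{\pi}^{1/p^m}$ gives $V_m = \bA_K/(p)[(1+\overline{\pi})^{1/p^m}] = \bA_K/(p)[\overline{\pi}^{1/p^m}]$, a finite \'etale $\bA_K/(p)$-algebra of degree $p^m$. Hence $\bigcup_m V_m$ is the compositum inside $\tilde{\bA}_K/(p)$ of $\bA_K/(p)$ with the perfect closure $\bigcup_m \Fp((\overline{\pi}^{1/p^m}))$ of $\Fp((\overline{\pi}))$. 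The perfect closure is dense in $L = \tilde{\bA}_{\Qp}/(p)$, and the equivalence of Theorem~\ref{T:field of norms specific} together with Remark~\ref{R:complete fields} identifies $\tilde{\bA}_K/(p)$ with $\bA_K/(p) \otimes_{\Fp((\overline{\pi}))} L$ (both sides are the unique finite \'etale lift of $\bA_K/(p)$ to $L$). The right side is automatically complete (being finite over $L$) and is the completion of $\bA_K/(p) \otimes_{\Fp((\overline{\pi}))} \bigcup_m \Fp((\overline{\pi}^{1/p^m})) = \bigcup_m V_m$, so density follows.

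To finish, define $\pi_0 : \bigcup_m V_m \to \bA_K/(p)$ by selecting the $e=0$ component in each $V_m$; this is well-defined by the compatibility of the decompositions, and Lemma~\ref{L:direct sum} gives the bound $\left|\pi_0(x)\right|' \leq c \left|x\right|'$. By density, $\pi_0$ extends uniquely to a continuous retraction $\tilde{\pi}_0 : \tilde{\bA}_K/(p) \to \bA_K/(p)$ which restricts to the identity on $\bA_K/(p)$ and vanishes on each $T_m$, hence on $\overline{T}$. The map $\bA_K/(p) \oplus \overline{T} \to \tilde{\bA}_K/(p)$ is then injective (apply $\tilde{\pi}_0$ to any relation $x_0 + x_T = 0$) and surjective: for any $x$, approximate $x$ by $x_k \in V_{m_k}$, so that $\pi_0(x_k) \to \tilde{\pi}_0(x)$ and $x_k - \pi_0(x_k) \in T_{m_k}$ converges to $x - \tilde{\pi}_0(x) \in \overline{T}$. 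Boundedness of the map is immediate from the ultrametric inequality, and boundedness of the inverse follows from Lemma~\ref{L:direct sum}, so we obtain a Banach-space isomorphism. The hard part, as I see it, is the density step: it rests on the identification $\tilde{\bA}_K/(p) \cong \bA_K/(p) \otimes_{\Fp((\overline{\pi}))} L$, which is not made entirely explicit in the paper and requires a careful unwinding of the reduction-mod-$p$ side of the equivalence in Theorem~\ref{T:field of norms specific}.
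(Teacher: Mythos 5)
The paper states Corollary~\ref{C:direct sum} immediately after Lemma~\ref{L:direct sum} with no explicit proof, leaving the completion argument to the reader; your write-up supplies exactly the argument that is implicitly intended. The key points are both present and correct: the uniform constant $c$ from Lemma~\ref{L:direct sum} makes the projection $\pi_0$ onto the $e=0$ component bounded on $\bigcup_m V_m$, hence it extends continuously to a bounded retraction $\tilde{\pi}_0$ once one knows $\bigcup_m V_m$ is dense; and the density step, which you correctly flag as the only genuinely nontrivial point, reduces to the identification $\tilde{\bA}_K/(p) \cong \bA_K/(p) \otimes_{\Fp((\overline{\pi}))} L$ (which is just the compatibility of base extension in Theorem~\ref{T:field of norms specific}, applied mod $p$) together with the density of the perfect closure of $\Fp((\overline{\pi}))$ in $L$ and the fact that $\bA_K/(p)$ is a finite free $\Fp((\overline{\pi}))$-module. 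Your verification that $\overline{T}$ is the closure of $\bigcup_m T_m$, and the deduction of injectivity, surjectivity, and boundedness of the inverse from $\tilde{\pi}_0$, are all routine and correct; since $c \geq 1$ (the paper takes $c_0 \geq 1$ in the proof of the lemma), the bound $\left|x_T\right|' \leq c\left|x\right|'$ follows as you implicitly use.
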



\begin{lemma}
For $\overline{T}$ as in Corollary~\ref{C:direct sum}, for any $\gamma \in \Gamma - \{1\}$, the map $\gamma-1: \overline{T} \to \overline{T}$ is bijective
with bounded inverse.
\end{lemma}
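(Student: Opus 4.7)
My plan is to decompose $\overline{T}$ into $\gamma$-invariant pieces indexed by $\gamma$-orbits on fractional exponents, and on each piece reduce the inversion of $\gamma-1$ to a single operator on $\bA_K/(p)$ controlled by Lemma~\ref{L:gamma analytic}. First I would present $\overline{T}$ as the orthogonal Banach completion of $\bigoplus_{k \geq 1,\, a \in A_k} (1+\overline{\pi})^{a/p^k}\bA_K/(p)$, where $A_k = \{a : 1 \leq a < p^k,\ \gcd(a,p)=1\}$; orthogonality at each level $k$ follows from Lemma~\ref{L:direct sum}. A direct computation gives $\gamma((1+\overline{\pi})^{a/p^k}) = \theta_{k,a}\cdot (1+\overline{\pi})^{a'/p^k}$, where $a' \in A_k$ is the reduction of $a\gamma$ modulo $p^k$ and $\theta_{k,a} = (1+\overline{\pi})^{(a\gamma - a')/p^k}$ is a $\Zp$-power of $1+\overline{\pi}$ lying in $\bA_K/(p)$. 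Setting $n = v_p(\gamma-1)$, each $\gamma$-orbit on $A_k$ has size $d = p^{\max(0,k-n)}$; packaging the lines in a single orbit into $V_\mathcal{O}$ (free of rank $d$ over $\bA_K/(p)$) makes $\overline{T}$ the orthogonal completion of $\bigoplus_\mathcal{O} V_\mathcal{O}$, so it suffices to invert $\gamma-1$ bijectively on each $V_\mathcal{O}$ with inverse norm uniform in $\mathcal{O}$.

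On $V_\mathcal{O}$ with cyclic basis $\be_j = (1+\overline{\pi})^{a_j/p^k}$ ($j = 0,\ldots,d-1$, with $a_{j+1} \equiv a_j\gamma \pmod{p^k}$), unwinding the equation $(\gamma-1)(\sum_j \be_j \overline{x}_j) = \sum_j \be_j \overline{y}_j$ gives the cyclic recursion $\theta_{j-1}\gamma(\overline{x}_{j-1}) - \overline{x}_j = \overline{y}_j$. Iterating and imposing periodicity $\overline{x}_d = \overline{x}_0$ produces the single equation $(1 - \Theta\gamma^d)\overline{x}_0 = (\text{explicit combination of }\overline{y}_i)$ with $\Theta = \prod_{j=0}^{d-1}\gamma^{d-1-j}(\theta_j)$. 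A telescoping computation on the exponents collapses this to $\Theta = (1+\overline{\pi})^{a_0(\gamma^d-1)/p^k}$. Writing $\Theta = 1+\mu$ and using the norm formula $|(1+\overline{\pi})^t-1|' = p^{-p^{v_p(t)+1}/(p-1)}$ for $t \in \Zp\setminus\{0\}$ (with $v_p$ the $p$-adic valuation), one finds $|\mu|' = p^{-p^{\max(0,n-k)+1}/(p-1)}$. The problem thus reduces to inverting $1-\Theta\gamma^d$ on $\bA_K/(p)$ uniformly in $(k,\mathcal{O})$.

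For the central step I would split $1-\Theta\gamma^d = -\mu\gamma^d + (1-\gamma^d)$. Since $\mu$ is a nonzero element of the field $\bA_K/(p)$ and $\gamma^d$ is an isometry, the leading term $-\mu\gamma^d$ is bijective on $\bA_K/(p)$ with inverse norm exactly $|\mu|'^{-1}$. Lemma~\ref{L:gamma analytic} applied to $\gamma^d \in 1+p^{\max(n,k)}\Zp$ bounds $|(1-\gamma^d)(\overline{x})|' \leq c\,p^{-p^{\max(n,k)+1}/(p-1)}|\overline{x}|'$, and a direct comparison shows $p^{\max(n,k)+1} > p^{\max(0,n-k)+1}$ for all $k \geq 1$. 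Once this gap outweighs $c$, the remainder is strictly dominated by $-\mu\gamma^d$ in norm, giving $1-\Theta\gamma^d = -\mu\gamma^d(1+E)$ with $\|E\|_{\mathrm{op}} < 1$; Neumann series then yields bijectivity with inverse norm $|\mu|'^{-1} \leq p^{p^n/(p-1)}$ uniformly in $(k,\mathcal{O})$. The bound propagates through the cyclic unwinding and the orthogonal sum to give the lemma.

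The hard part will be handling the constant $c$ from Lemma~\ref{L:gamma analytic} for the finitely many small values of $k$ where the exponent gap is not wide enough to absorb $c$. Each such exceptional orbit has size one (since $k \leq n$), so $V_\mathcal{O}$ is free of rank one and the operator becomes $\overline{x} \mapsto \overline{x} - \theta\gamma(\overline{x})$; I would handle these by a finer analysis of $\mu\overline{x} + (1+\mu)(\gamma-1)(\overline{x}) = 0$, comparing $\overline{\pi}$-adic leading coefficients on both sides and exploiting that $\mu$ is a unit to conclude injectivity, with surjectivity following from the approximate right inverse $\sigma(\overline{x}) = \mu^{-1}\overline{x}$ together with a Neumann series on the complementary subspace.
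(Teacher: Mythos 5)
Your orbit decomposition and the telescoping identity $\Theta = (1+\overline{\pi})^{a_0(\gamma^d-1)/p^k}$ are correct, and they make explicit a subtlety that the paper's exposition glosses over: after replacing $\gamma$ by a power, $\gamma$ does \emph{not} preserve the individual summand $(1+\overline{\pi})^{e}\bA_K/(p)$ when $e = a/p^k$ with $k > v_p(\gamma-1)$; it only permutes the summands within an orbit. Your reduction of the inversion problem on an orbit block to inverting $1 - \Theta\gamma^d$ on $\bA_K/(p)$, together with the exponent comparison $p^{\max(n,k)+1}$ versus $p^{\max(0,n-k)+1}$, reproduces exactly the numerical threshold $c < p^{p^n}$ that the paper extracts from Lemma~\ref{L:gamma analytic}. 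In that respect your treatment is more careful than the printed proof.

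However, your final paragraph reveals a gap: you worry about ``finitely many small values of $k$ where the exponent gap is not wide enough to absorb $c$,'' and propose a separate (and vague) analysis comparing $\overline{\pi}$-adic leading coefficients, an argument that as sketched would not obviously succeed (cancellation between $\mu\overline{x}$ and $(1+\mu)(\gamma-1)(\overline{x})$ is not ruled out by looking at leading terms). The clean fix, and the step you have omitted, is the paper's preliminary reduction: using the identity
\[
(\gamma-1)^{-1} = (1 + \gamma + \cdots + \gamma^{m-1})(\gamma^m - 1)^{-1}
\]
and the fact that each $\gamma^j$ is norm-preserving, one may replace $\gamma$ by $\gamma^m$ for any $m$ at no cost. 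Choosing $m$ so that $n = v_p(\gamma^m-1)$ satisfies $c < p^{p^n}$, your own inequality
\[
p^{\max(n,k)+1} - p^{\max(0,n-k)+1} \geq p^{n+1} - p^{n} = p^n(p-1) \qquad (k \geq 1)
\]
then guarantees the Neumann-series estimate uniformly over \emph{all} orbits, with no exceptional cases left to handle. You should insert this reduction at the outset; once you do, your argument is complete and the awkward last paragraph can be deleted.
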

\begin{proof}
By Lemma~\ref{L:direct sum}, it is sufficient to check that for each $e \in \ZZ[p^{-1}] \cap (0,1)$,
$\gamma-1$ is bijective on $(1 + \overline{\pi})^e \bA_K/(p)$ with the inverse bounded uniformly in $e$. Using the identity
\[
(\gamma-1)^{-1} = (1 + \gamma + \cdots + \gamma^{m-1})(\gamma^m - 1)^{-1},
\]
we may reduce the claim for $\gamma$ to the claim for $\gamma^m$ for any convenient positive integer $m$ (chosen uniformly
in $e$). Consequently, we may assume that $\gamma \in (1 + p^n \Zp) - (1 + p^{n+1} \Zp)$ for $n$ large enough that
there exists a value $c$ as in Lemma~\ref{L:gamma analytic} which is less than $p^{p^n}$.

For $\overline{x} \in \bA_K/(p)$ we may write
\begin{align*}
(\gamma-1)((1 + \overline{\pi})^e \overline{x}) &= (\gamma-1)((1 + \overline{\pi})^e)\overline{x} +
\gamma((1 + \overline{\pi})^{e}) (\gamma-1)(\overline{x}) \\
&= (1 + \overline{\pi})^e((1 + \overline{\pi})^{(\gamma-1)e} - 1) \overline{x}
+ (1 + \overline{\pi})^{\gamma e} (\gamma-1)(\overline{x}).
\end{align*}
By Lemma~\ref{L:gamma analytic}, $\left|(\gamma-1)(\overline{x})\right|'\leq  c p^{-p^{n+1}/(p-1)} \left|\overline{x}\right|'$. On the other hand, since $e \in \ZZ[p^{-1}] \cap (0,1)$, $(\gamma-1)e$ has $p$-adic valuation
at most $n-1$, so $\left|(1 + \overline{\pi})^{(\gamma-1)e} - 1\right|' \geq (\left|\overline{\pi}\right|')^{p^{n-1}}
= p^{-p^n/(p-1)}$. Since $c p^{-p^{n+1}/(p-1)} < p^{-p^n/(p-1)}$ by our choice of $n$,
the operator 
\begin{equation} \label{eq:gamma assign}
\overline{x} \mapsto (1 + \overline{\pi})^{-e} ((1 + \overline{\pi})^{(\gamma-1)e} - 1)^{-1} (\gamma-1)((1 + \overline{\pi})^e \overline{x})
\end{equation}
on $\bA_K/(p)$ is equal to the identity map plus the operator
\[
\overline{x} \mapsto (1 + \overline{\pi})^{(\gamma-1)e} ((1 + \overline{\pi})^{(\gamma-1)e} - 1)^{-1}
(\gamma-1)(\overline{x})
\]
whose norm is less than 1. Therefore, \eqref{eq:gamma assign} is an invertible
operator whose inverse has norm 1.
This proves the claim.
\end{proof}
\begin{cor}  \label{C:invert}
For $\overline{T}$ as in Corollary~\ref{C:direct sum} and $\gamma \in \Gamma-\{1\}$,
every $\overline{x} \in \tilde{\bA}_K/(p)$ can be written uniquely
as $\overline{y} + (\gamma-1)(\overline{z})$ with $\overline{y} \in \bA_K/(p)$,
$\overline{z}\in \overline{T}$. Moreover, 
\[
\max\{\left|\overline{y}\right|', \left|\overline{z}\right|'\} \leq c \left|\overline{x}\right|'
\]
for some constant $c$ (depending on $K$ and $\gamma$ but not on $\overline{x}$).
\end{cor}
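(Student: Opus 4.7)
The plan is to combine the two preceding results in the obvious way. By Corollary~\ref{C:direct sum}, we have a topological direct sum decomposition $\tilde{\bA}_K/(p) = \bA_K/(p) \oplus \overline{T}$, and by the preceding lemma, $\gamma-1$ acts on $\overline{T}$ bijectively with bounded inverse. The idea is to split $\overline{x}$ by the direct sum and apply $(\gamma-1)^{-1}$ to the $\overline{T}$-component.

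More precisely: given $\overline{x} \in \tilde{\bA}_K/(p)$, I first write $\overline{x} = \overline{y} + \overline{t}$ uniquely with $\overline{y} \in \bA_K/(p)$, $\overline{t} \in \overline{T}$, and with $\max\{|\overline{y}|', |\overline{t}|'\} \leq c_1 |\overline{x}|'$ for some constant $c_1$ depending only on $K$ (from Corollary~\ref{C:direct sum}). I then set $\overline{z} = (\gamma-1)^{-1}(\overline{t}) \in \overline{T}$, using the preceding lemma to both make sense of this and bound $|\overline{z}|' \leq c_2 |\overline{t}|'$ for some $c_2$ depending on $K$ and $\gamma$. Then $\overline{x} = \overline{y} + (\gamma-1)(\overline{z})$ is a decomposition of the desired form, and $\max\{|\overline{y}|', |\overline{z}|'\} \leq c_1 c_2 |\overline{x}|'$.

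The key supporting observation, needed both for existence (so the construction lands in the right place) and especially for uniqueness, is that $\gamma$ preserves the direct sum decomposition: it clearly preserves $\bA_K/(p)$, and it also preserves $\overline{T}$. The latter follows from a direct calculation on generators: for $e = a/p^m \in \ZZ[p^{-1}] \cap (0,1)$ with $\gcd(a,p)=1$, we write $\gamma a = b + p^m c$ with $b \in \{1, \dots, p^m - 1\}$ coprime to $p$ and $c \in \ZZ$, giving
\[
\gamma\bigl((1+\overline{\pi})^e\bigr) = (1 + \overline{\pi})^c \cdot (1+\overline{\pi})^{b/p^m},
\]
which lies in $\bA_K/(p) \cdot (1+\overline{\pi})^{b/p^m} \subseteq \overline{T}$. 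In particular, $(\gamma-1)(\overline{z}) \in \overline{T}$ for any $\overline{z} \in \overline{T}$.

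Uniqueness then falls out: if $\overline{y} + (\gamma-1)(\overline{z}) = \overline{y}' + (\gamma-1)(\overline{z}')$, then $(\overline{y} - \overline{y}') = (\gamma-1)(\overline{z}' - \overline{z})$ is an element of $\bA_K/(p) \cap \overline{T} = 0$, so $\overline{y} = \overline{y}'$ and $(\gamma-1)(\overline{z} - \overline{z}') = 0$, whence $\overline{z} = \overline{z}'$ by injectivity of $\gamma-1$ on $\overline{T}$. There is no serious obstacle here; the only point requiring any care is the verification that $\gamma$ stabilizes $\overline{T}$, and even this is routine once one observes that raising $1 + \overline{\pi}$ to the power $\gamma \in \Zp^\times$ only modifies the fractional part of the exponent by an integer.
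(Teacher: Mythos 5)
Your proof is correct and is the natural (and surely intended) argument for this corollary, which the paper states without proof: split $\overline{x}$ via the direct sum of Corollary~\ref{C:direct sum}, invert $\gamma-1$ on the $\overline{T}$-component using the preceding lemma, and chain the bounds. Two small remarks. First, the verification that $\gamma$ stabilizes $\overline{T}$ is already implicit in the statement of the preceding lemma (which asserts $\gamma-1\colon \overline{T} \to \overline{T}$), so you could simply cite it; your direct calculation is fine but redundant. Second, in that calculation you should have $c \in \Zp$ rather than $c \in \ZZ$: since $\gamma \in \Zp^\times$ and $a \in \ZZ$, the quantity $\gamma a$ lies in $\Zp$, and writing $\gamma a = b + p^m c$ with $b$ the integer representative of $\gamma a$ modulo $p^m$ forces $c \in \Zp$. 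This does not affect the conclusion, since $(1+\overline{\pi})^c \in \bA_K/(p)$ for any $c \in \Zp$ via the binomial expansion, but as written the equation is off.
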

\begin{cor} \label{C:lift direct sum invert}
Let $T \subset \tilde{\bA}_K$ be the closure for the weak topology of the subgroup generated by
$(1 + \pi)^e \bA_K$ for all  $e \in \ZZ[p^{-1}] \cap (0,1)$. 
For $\gamma \in \Gamma - \{1\}$, every $x \in \tilde{\bA}_K$ can be written uniquely as 
$y + (\gamma-1)(z)$ with $y \in \bA_K$, $z \in T$. Moreover,
there exist $c,r_0 >0$ (depending on $K$ and $\gamma$) such that
\[
\max\{\left|y\right|_r, \left|z\right|_r\} \leq c^r \left|x\right|_r \qquad (r \in (0,r_0]).
\]
\end{cor}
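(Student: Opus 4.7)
\textbf{Proof plan for Corollary~\ref{C:lift direct sum invert}.} The plan is to lift the characteristic-$p$ decomposition of Corollary~\ref{C:invert} to characteristic $0$ by $p$-adic iteration, choosing lifts via Corollary~\ref{C:good lifts} so that the Gauss norms can be controlled.

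For \emph{uniqueness}, suppose $y + (\gamma-1)(z) = 0$ with $y \in \bA_K$, $z \in T$. Reducing modulo $p$ and invoking the uniqueness clause of Corollary~\ref{C:invert} forces the reductions of $y$ and $z$ to vanish. Since $\tilde{\bA}_K$ is $p$-torsion-free and $T$ is stable under the operation of dividing by $p$ (which follows from its definition as a weak-topology closure of a subgroup generated by elements of the form $(1+\pi)^e a$, combined with the fact that if such a sum lies in $p\tilde{\bA}_K$ then one may replace the coefficients $a \in \bA_K$ by $a/p$ in the limit), we have $T \cap p\tilde{\bA}_K = pT$; hence $y = py'$, $z = pz'$ with $y' \in \bA_K$, $z' \in T$, and $y' + (\gamma-1)(z') = 0$. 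Induction on the $p$-adic valuation, together with $p$-adic completeness, gives $y = z = 0$.

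For \emph{existence}, we iteratively construct $y \in \bA_K$ and $z \in T$ solving $x = y + (\gamma-1)(z)$. Set $x_0 = x$; given $x_n \in \tilde{\bA}_K$, apply Corollary~\ref{C:invert} to its reduction $\overline{w}_n \in \tilde{\bA}_K/(p)$ to write $\overline{w}_n = \overline{y}_n + (\gamma-1)(\overline{z}_n)$ with $\max\{|\overline{y}_n|', |\overline{z}_n|'\} \leq c_0 |\overline{w}_n|'$. Lift $\overline{y}_n$ to $y_n \in \bA_K^\dagger$ via Corollary~\ref{C:good lifts} so that $|y_n - [\overline{y}_n]|_r < |y_n|_r = (|\overline{y}_n|')^r$ for $r \in (0,r_0]$; lift $\overline{z}_n \in \overline{T}$ to $z_n \in T$ by using Lemma~\ref{L:direct sum} to decompose $\overline{z}_n$ into coefficients along $(1+\overline{\pi})^{e/p^m}$ (passing to a weak-limit if necessary) and then applying Corollary~\ref{C:good lifts} to each coefficient in $\bA_K$, multiplying by the Teichm\"uller unit $(1+\pi)^{e/p^m} = [1+\overline{\pi}]^{e/p^m}$. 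Setting $x_{n+1} = p^{-1}(x_n - y_n - (\gamma-1)(z_n)) \in \tilde{\bA}_K$ (well defined since the first Teichm\"uller coordinate cancels), the $p$-adic series $y = \sum_n p^n y_n$ and $z = \sum_n p^n z_n$ converge in the $p$-adic topologies of $\bA_K$ and $\tilde{\bA}_K$ respectively, the latter limit lying in $T$ by the weak-topology closedness of $T$.

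The chief technical obstacle is the \emph{Gauss-norm bound} $\max\{|y|_r, |z|_r\} \leq c^r|x|_r$. Because $|p x_{n+1}|_r \leq \max\{|x_n|_r, |y_n|_r, |z_n|_r\}$, a naive ultrametric analysis yields only $|x_{n+1}|_r \leq p c_0^r |x_n|_r$, hence $|p^n y_n|_r \leq c_0^{(n+1)r}|x|_r$, which grows with $n$. To absorb this growth, we shrink $r_0$ and argue more delicately: the element $x_n - y_n - (\gamma-1)(z_n)$ has vanishing first Teichm\"uller coordinate, and the strict inequality $|y_n - [\overline{y}_n]|_r < |y_n|_r$ from Corollary~\ref{C:good lifts}, together with an analogous strict control on $z_n - [\overline{z}_n]$ obtained by applying the same cited corollary coefficient-wise through Lemma~\ref{L:direct sum}, forces the later Teichm\"uller coordinates of $p x_{n+1}$ to inherit a genuine contraction factor. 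One can then choose $r_0$ small enough (depending on $K$ and $\gamma$) that this contraction dominates the spurious factor of $p$ in each iteration, proving that the partial sums are Cauchy in $|\cdot|_r$ and that the limits satisfy the desired bound with $c$ any constant exceeding the propagation constant determined by $c_0$ and the lifting constants from Lemma~\ref{L:describe dagger rings}. Establishing this contraction uniformly in $n$ is the step requiring the greatest care and ultimately pins down the admissible $r_0$.
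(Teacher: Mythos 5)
The paper's own proof is the one-liner ``combine Corollary~\ref{C:invert} with Corollary~\ref{C:good lifts},'' and your plan --- lift the mod-$p$ decomposition $p$-adically, using the good lifts to keep control of the Gauss norms --- is exactly what is intended. You have also correctly identified the one step that is not routine. However, the way you propose to close that step does not hold up, for two reasons.

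First, the heuristic ``this contraction dominates the spurious factor of $p$'' is not what saves the day. By \eqref{eq:limsup} the lifting error $\lambda(\overline{a})-[\overline{a}]$ has $\limsup_{r\to 0^+}$ of $\left|\bullet\right|_r$-norm at most $p^{-1}$ relative to $(\left|\overline{a}\right|')^r$, and the Teichm\"uller-addition correction in $[\overline{w}_n]-[\overline{y}_n]-[(\gamma-1)(\overline{z}_n)]$ is bounded by $p^{-1}$ for the same reason; neither of these beats a factor of $p$. And indeed the naive recursion really does give $\left|x_{n+1}\right|_r \leq p\left|x_n\right|_r$, so $\left|x_n\right|_r$ genuinely grows like $p^n$. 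The point is that this is \emph{exactly} the growth you can afford: the $n$-th term of the series is $p^ny_n$, and $\left|p^ny_n\right|_r = p^{-n}(\left|\overline{y}_n\right|')^r \leq p^{-n}c_0^r\left|x_n\right|_r \leq c_0^r\left|x\right|_r$, with the $p^{-n}$ cancelling the $p^n$. The role of the strict inequality from Corollary~\ref{C:good lifts} (and its analogue for $\overline{T}$) is only to guarantee that the growth does \emph{not} exceed $p^n$: after shrinking $r_0$ so that the combined factor $\epsilon_r\cdot (cc_0)^r$ from the lifting and Teichm\"uller errors is $\leq 1$, one obtains $\left|x_{n+1}\right|_r\leq p\max\bigl\{\left|x_n-[\overline{w}_n]\right|_r,\,\epsilon_r(cc_0)^r(\left|\overline{w}_n\right|')^r\bigr\}\leq p\left|x_n\right|_r$, and the recursion closes.

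Second, your assertion that ``the partial sums are Cauchy in $\left|\cdot\right|_r$'' is unjustified and, given the analysis above, almost certainly false: the terms $p^ny_n$ are merely \emph{uniformly bounded} in $\left|\cdot\right|_r$, not decaying. What one has instead is $p$-adic convergence plus a uniform $\left|\cdot\right|_r$-bound on the partial sums. This is enough: for each $m$, the $m$-th Teichm\"uller coordinate of $y$ agrees with that of $\sum_{n\leq m}p^ny_n$ (the tail lies in $p^{m+1}\tilde{\bA}_K$), so $p^{-m}(\left|\overline{y}_m\right|')^r\leq\sup_{n\leq m}\left|p^ny_n\right|_r\leq c_0^r\left|x\right|_r$ and hence $\left|y\right|_r\leq c_0^r\left|x\right|_r$; and similarly for $z$. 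With these two corrections --- tracking the precise growth rate rather than hoping for a contraction, and replacing the Cauchy claim by the uniform-bound-plus-$p$-adic-limit argument --- your plan goes through as written.
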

\begin{proof}
Combine Corollary~\ref{C:invert}
with Corollary~\ref{C:good lifts}.
\end{proof}

\subsection{Overconvergence and $(\varphi, \Gamma)$-modules, part 2: the theorem of Cherbonnier-Colmez}

We start with the following analogue of Lemma~\ref{L:fully faithful1}.
\begin{lemma} \label{L:fully faithful2}
Base extension of \'etale $\varphi$-modules which are trivial modulo $p$ from
$\bA_K^\dagger$ to $\bA_K$ is fully faithful.
\end{lemma}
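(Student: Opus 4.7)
The plan is to derive this from Lemma~\ref{L:fully faithful1} rather than redo the iterative matrix refinement used there. By Remark~\ref{R:hom}, it suffices to show that for any \'etale $\varphi$-module $M$ over $\bA_K^\dagger$ which is trivial modulo $p$, the natural inclusion $M^\varphi \hookrightarrow (M \otimes_{\bA_K^\dagger} \bA_K)^\varphi$ is an equality.

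First I would observe that $M^\dagger := M \otimes_{\bA_K^\dagger} \tilde{\bA}_K^\dagger$ is again an \'etale $\varphi$-module which is trivial modulo $p$: any basis of $M$ fixed by $\varphi$ modulo $p$ gives, after base change, a basis of $M^\dagger$ fixed modulo $p$, since $\bA_K^\dagger/(p)$ embeds into $\tilde{\bA}_K^\dagger/(p)$. Given $v \in (M \otimes_{\bA_K^\dagger} \bA_K)^\varphi$, I would view its image inside
\[
M \otimes_{\bA_K^\dagger} \tilde{\bA}_K \;=\; M^\dagger \otimes_{\tilde{\bA}_K^\dagger} \tilde{\bA}_K.
\]
This image is $\varphi$-invariant, so Lemma~\ref{L:fully faithful1} applied to $M^\dagger$ forces it to lie in $M^\dagger = M \otimes_{\bA_K^\dagger} \tilde{\bA}_K^\dagger$.

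To conclude $v \in M$, I would fix a basis $\be_1,\dots,\be_d$ of $M$ over $\bA_K^\dagger$ and expand $v = \sum_i c_i \be_i$. The coefficients $c_i$ lie in $\bA_K$ (since $v \in M \otimes_{\bA_K^\dagger} \bA_K$) and simultaneously in $\tilde{\bA}_K^\dagger$ (by the previous step), hence in $\bA_K \cap \tilde{\bA}_K^\dagger$ viewed inside $\tilde{\bA}_K$. That intersection equals $\bA_K^\dagger$: this is built into the construction for $K = \Qp$ (Definition~\ref{D:dagger}) and was verified componentwise for general $K$ in the proof of Lemma~\ref{L:describe dagger rings} (it is used in the same way in the proof of Theorem~\ref{T:overconvergent1}). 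Therefore each $c_i \in \bA_K^\dagger$ and $v \in M$, giving the equality $M^\varphi = (M \otimes_{\bA_K^\dagger} \bA_K)^\varphi$.

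I do not anticipate any essentially new obstacle: the analytic content --- the convergence argument producing a $\varphi$-invariant basis defined over $W^\dagger(L')$ from one over $W(L')$ --- is already packaged in Lemma~\ref{L:fully faithful1}, and the only other ingredient needed is the clean identity $\bA_K^\dagger = \tilde{\bA}_K^\dagger \cap \bA_K$, which is available from earlier in the section. The triviality-modulo-$p$ hypothesis is exactly what is needed to invoke Lemma~\ref{L:fully faithful1}, so it transfers across the base change without issue.
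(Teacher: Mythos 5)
Your argument is correct and matches the paper's own proof almost step for step: both reduce via Remark~\ref{R:hom} to showing $M^\varphi = (M \otimes_{\bA_K^\dagger} \bA_K)^\varphi$, both invoke Lemma~\ref{L:fully faithful1} applied to $M \otimes_{\bA_K^\dagger} \tilde{\bA}_K^\dagger$ (which you helpfully make explicit is still trivial modulo $p$), and both conclude by intersecting inside $M \otimes_{\bA_K^\dagger} \tilde{\bA}_K$ using $\bA_K \cap \tilde{\bA}_K^\dagger = \bA_K^\dagger$ together with freeness of $M$. No gap; this is the intended proof.
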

\begin{proof}
Again by Remark~\ref{R:hom}, this reduces to checking that if $M$ is an \'etale $\varphi$-module over 
$\bA_K^\dagger$  which is trivial modulo $p$, then 
\[
M^{\varphi} = (M \otimes_{\bA_K^\dagger} \bA_K)^{\varphi}.
\]
By Lemma~\ref{L:fully faithful1}, we already have
\[
(M \otimes_{\bA_K^\dagger} \bA_K)^{\varphi} \subseteq
(M \otimes_{\bA_K^\dagger} \tilde{\bA}_K)^{\varphi} =
(M \otimes_{\bA_K^\dagger} \tilde{\bA}^{\dagger}_K)^{\varphi}.
\]
Since $M$ is a free module, within $M \otimes_{\bA_K^\dagger} \tilde{\bA}_K$ we have
\[
(M \otimes_{\bA_K^\dagger} \bA_K)
\cap
(M \otimes_{\bA_K^\dagger} \tilde{\bA}^\dagger_K)
= M,
\]
yielding the desired result.
\end{proof}

\begin{theorem}[Cherbonnier-Colmez] \label{T:overconvergent2}
Base extension of \'etale $(\varphi, \Gamma)$-modules from
$\bA_K^\dagger$ to $\bA_K$ is an equivalence of categories. Consequently (by Theorem~\ref{T:Fontaine}),
both categories are equivalent to the category of continuous representations of $G_K$ on finite free $\Zp$-modules.
\end{theorem}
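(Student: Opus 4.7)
The plan is to follow the same two-step pattern as the proof of Theorem~\ref{T:overconvergent1}, substituting Lemma~\ref{L:fully faithful2} for Lemma~\ref{L:fully faithful1} in the full faithfulness half, and using the Tate--Sen-like decomposition of Corollary~\ref{C:lift direct sum invert} in place of $\varphi$-bijectivity for the essentially surjective half.

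For full faithfulness, I will use Remark~\ref{R:hom} to reduce to the assertion that $L^{\varphi,\Gamma} = (L \otimes_{\bA_K^\dagger} \bA_K)^{\varphi,\Gamma}$ for any \'etale $(\varphi,\Gamma)$-module $L$ over $\bA_K^\dagger$. Via Theorem~\ref{T:Fontaine} I will pick a finite Galois extension $M/K$ over which the residual Galois representation becomes trivial, so that after base change to $\bA_M^\dagger$ the module $L$ acquires a basis fixed modulo $p$ by both $\varphi$ and $\Gamma$. Lemma~\ref{L:fully faithful2} applies to this $\varphi$-module, giving $(L \otimes \bA_M^\dagger)^\varphi = (L \otimes \bA_M)^\varphi$; combining with the identity $\bA_M^\dagger \cap \bA_K = \bA_K^\dagger$ inside $\bA_M$ (which follows from the definition $\bA_\bullet^\dagger = \bA_\bullet \cap \tilde{\bA}_\bullet^\dagger$ together with the intersection $\tilde{\bA}_M^\dagger \cap \tilde{\bA}_K = \tilde{\bA}_K^\dagger$ used in the proof of Theorem~\ref{T:overconvergent1}) and then passing to $\Gamma$-fixed subspaces yields the desired equality.

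For essential surjectivity, given an \'etale $(\varphi,\Gamma)$-module $M$ over $\bA_K$, I first apply Theorem~\ref{T:overconvergent1} to $M \otimes_{\bA_K} \tilde{\bA}_K$ to obtain an \'etale $(\varphi,\Gamma)$-module $\tilde{M}^\dagger$ over $\tilde{\bA}_K^\dagger$ with $\tilde{M}^\dagger \otimes_{\tilde{\bA}_K^\dagger} \tilde{\bA}_K \cong M \otimes_{\bA_K} \tilde{\bA}_K$. The goal is then to produce a basis of $M$ over $\bA_K$ that simultaneously forms a basis of $\tilde{M}^\dagger$ over $\tilde{\bA}_K^\dagger$; the matrices of $\varphi$ and $\Gamma$ in such a basis would automatically have entries in $\bA_K \cap \tilde{\bA}_K^\dagger = \bA_K^\dagger$, defining the desired descent $M^\dagger$. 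Fixing initial bases with associated change-of-basis matrix $U \in \GL_d(\tilde{\bA}_K)$, this amounts to factoring $U = AB^{-1}$ with $A \in \GL_d(\bA_K)$ and $B \in \GL_d(\tilde{\bA}_K^\dagger)$. I will construct this factorization iteratively, after reducing via Galois descent to the case of trivial residual representation and fixing a topological generator $\gamma$ of an open pro-$p$ subgroup of $\Gamma$. At each stage I apply Corollary~\ref{C:lift direct sum invert} entrywise to the current defect to split off an $\bA_K$-part (absorbed by an update of $A$) and a $(\gamma-1)(T)$-part (absorbed by an update of $B$), using the commutation identity $G \gamma(U) = U \tilde{G}$ between the matrices of $\gamma$ on the two sides to propagate each correction consistently.

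The hard part will be orchestrating this iteration so that both sequences converge in their respective topologies: $A_n$ in the weak topology on $\bA_K$ and $B_n$ in the LF topology on $\tilde{\bA}_K^\dagger$. The key ingredients are the norm bound $\max\{\left|y\right|_r, \left|z\right|_r\} \leq c^r \left|x\right|_r$ from Corollary~\ref{C:lift direct sum invert}, which keeps the $(\gamma-1)$-correction under control in $\left|\bullet\right|_r$, and the exponential contraction of $\gamma-1$ on $\bA_K/(p)$ from Lemma~\ref{L:gamma analytic}, which makes successive defects shrink geometrically once $\gamma$ is chosen sufficiently close to $1$. This delicate simultaneous convergence is precisely what distinguishes the Cherbonnier--Colmez theorem from the easier Theorem~\ref{T:overconvergent1}: there, bijectivity of $\varphi$ on $\tilde{\bA}_K^\dagger$ enabled a Lang-style averaging with no analysis of $\Gamma$, whereas here the failure of $\varphi$ to be bijective on $\bA_K^\dagger$ is traded for a detailed use of the $\Gamma$-action via Corollary~\ref{C:lift direct sum invert}. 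Once convergence is established, the limiting common basis furnishes the descent $M^\dagger$ and completes the proof.
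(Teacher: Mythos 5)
Your overall plan is correctly aimed: the reduction of full faithfulness via Remark~\ref{R:hom} and Lemma~\ref{L:fully faithful2}, the Galois descent to the residually trivial case, and the use of Corollary~\ref{C:lift direct sum invert} to drive an iteration are all the right ingredients, and they match the paper's approach. However, in the essential surjectivity half you have left the genuinely hard content — the convergence of the iteration — as an acknowledged black box, and your particular framing obscures two features of the argument that the paper relies on crucially.

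First, your proposed factorization $U = AB^{-1}$ asks you to run two sequences converging in two different topologies (weak on $\bA_K$ and LF on $\tilde{\bA}^\dagger_K$). The paper sidesteps this entirely by iterating inside a single ring $\tilde{\bA}^\dagger_K$ with a single norm $\left|\bullet\right|_r$: after reducing (via Theorems~\ref{T:Fontaine} and~\ref{T:overconvergent1}) to descending a module $M$ over $\tilde{\bA}^\dagger_K$ with a residually trivial basis, one writes the matrix $G$ of $\gamma = 1+p^2$ and constructs a sequence of basis changes $U_l$ over $\tilde{\bA}^\dagger_K$ with $U_l^{-1} G\gamma(U_l) = 1 + X_l + (\gamma-1)(Y_l)$, $X_l$ over $\bA_K^\dagger$, $Y_l$ over $T$. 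The decisive estimate is $\left|X_l\right|_r \leq \epsilon^2$ with $\left|Y_l\right|_r \leq \epsilon^{l+2}$ for $\epsilon = \left|G-1\right|_r^{1/3}$; the $\left|Y_l\right|_r$'s decay geometrically, so the product of the $U_l$ converges in $\left|\bullet\right|_r$. Your sketch does not supply any quantitative control of this kind, and without it there is no proof — this is precisely where the theorem lives.

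Second, and more importantly, you never address how $\varphi$ gets handled. The paper's iteration \emph{only} fixes the matrix of $\gamma$ to lie in $\bA_K^\dagger$. That the matrix $A$ of $\varphi$ in the new basis also lies in $\bA_K^\dagger$ is a separate step: write $A = B + C$ with $B$ over $\bA_K^\dagger$ and $C$ over $T$, use the commutation $A\varphi(H) = H\gamma(A)$ to get $H^{-1}C\varphi(H) - C = (\gamma-1)(C)$, and note that if $C \neq 0$ with $p^m \| C$ then the left side is divisible by $p^{m+1}$ (since $H \equiv 1 \bmod p$) while the right side is not. This divisibility contradiction forces $C = 0$. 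Your write-up says nothing about this, and the $U = AB^{-1}$ formulation does not make it automatic; a common basis would indeed put \emph{both} matrices in $\bA_K^\dagger$, but constructing that common basis by simultaneously correcting against both $\varphi$ and $\gamma$ is exactly what would make the iteration fail to converge. The paper's insight is to iterate against $\gamma$ alone and then obtain $\varphi$ for free. You should restructure your argument around this two-stage scheme.
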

\begin{proof}
By Theorem~\ref{T:Fontaine} and Theorem~\ref{T:overconvergent1}, it is equivalent to show that
base extension of \'etale $(\varphi, \Gamma)$-modules from
$\bA_K^\dagger$ to $\tilde{\bA}^\dagger_K$ is an equivalence of categories. 
As in the proof of Theorem~\ref{T:overconvergent1} (but now using Lemma~\ref{L:fully faithful2} instead of
Lemma~\ref{L:fully faithful1}), it is sufficient to check that any \'etale $(\varphi,\Gamma)$-module
$M$ over $\tilde{\bA}^\dagger_K$ admitting a basis $\be_1,\dots,\be_d$ fixed by $\varphi$ and $\Gamma$ modulo $p$
descends to $\bA_K^\dagger$. 

Put $\gamma = 1 + p^2 \in \Gamma$.
Define $T,c,r_0$ as in Corollary~\ref{C:lift direct sum invert}.
Let $G \in \GL_d(\tilde{\bA}^\dagger_K)$ be given by $\gamma(\be_j) = \sum_i G_{ij} \be_i$, so that $G-1$ is divisible by $p$.
By \eqref{eq:limsup}, we can choose $r \in (0,r_0]$ so that $\epsilon = \left|G-1\right|_r^{1/3} < \min\{c^{-r},1\}$.

We define a sequence of invertible matrices $U_0, U_1,\dots$ over $\tilde{\bA}^\dagger_K$ congruent to 1 modulo $p$
with the property that $G_l = U_l^{-1} G \gamma(U_l)$ can be written as $1 + X_l + (\gamma-1)(Y_l)$
with $X_l$ having entries in $\bA^\dagger_K$, $Y_l$ having entries in $T$, and
\[
\left|X_l\right|_r \leq \epsilon^2, \,\left|Y_l\right|_r \leq \epsilon^{l+2}.
\]
To begin with, put $U_0 = 1$ and apply Corollary~\ref{C:lift direct sum invert} to construct
$X_0, Y_0$ of the desired form with $\left|X_0\right|_r, \left|Y_0\right|_r \leq c^r \left|G-1\right|_r \leq \epsilon^2$.
Given $U_l$, set $U_{l+1} = U_l(1 - Y_l)$ and write
\begin{align*}
G_{l+1} &= (1 - Y_l)^{-1}(1 + X_l + (\gamma-1)(Y_l))(1 - \gamma(Y_l)) \\
&= 1 + X_l + Y_lX_l - X_l \gamma(Y_l) + E_l
\end{align*}
with $\left|E_l\right|_r \leq \epsilon^{2l+4}$. Note that $Y_l X_l - X_l \gamma(Y_l)$ has entries in $T$
and $\left|Y_l X_l - X_l \gamma(Y_l)\right|_r \leq \epsilon^{l+4}$. 
Apply Corollary~\ref{C:lift direct sum invert} to split $Y_lX_l - X_l \gamma(Y_l) + E_l$ as $A_l + (\gamma-1)(B_l)$
with $A_l$ having entries in $\bA^\dagger_K$, $B_l$ having entries in $T$,
and $\left|A_l\right|_r, \left|B_l\right|_r \leq c^r \epsilon^{l+4} \leq \epsilon^{l+3}$. 
Set $X_{l+1} = X_l + A_l$, $Y_{l+1} =  B_l$ and continue.

The product $U= U_0 U_1 \cdots$ converges to an invertible matrix $U$ over $\tilde{\bA}^{\dagger}_K$.
Define the basis $\be'_1,\dots,\be'_d$ of $M$ by $\be'_j = \sum_i U_{ij} \be_i$.
Define the matrices $A,H$ by $\varphi(\be'_j) = \sum_i A_{ij} \be_i$,
$\gamma(\be'_j) = \sum_i H_{ij} \be_i$. By construction, $H$ has entries in $\bA^\dagger_K$
and is congruent to 1 modulo $p$. Since $\varphi$ and $\gamma$ commute,
$A \varphi(H)= H \gamma(A)$.
Apply Corollary~\ref{C:lift direct sum invert} to write $A = B + C$ with $B$ having entries in
$\bA^\dagger_K$ and $C$ having entries in $T$; then
\[
H^{-1} C \varphi(H) - C = (\gamma-1)(C).
\]
If $C$ is nonzero, then there is a largest nonnegative integer $m$ such that $C$ is divisible by $p^m$.
However, since $H \equiv 1 \pmod{p}$, $H^{-1} C \varphi(H) - C$ is divisible by $p^{m+1}$
while $(\gamma-1)(C)$ is not, a contradiction. Hence $C = 0$ and $A = B$ has entries in
$\bA^\dagger_K$.

Let $M^\dagger$ be the $\bA^\dagger_K$-span of $\be'_1,\dots,\be'_d$; it is an \'etale
$\varphi$-module over $\bA^\dagger_K$ such that $M^\dagger \otimes_{\bA^\dagger_K} \tilde{\bA}^\dagger_K
\cong M$. By Lemma~\ref{L:fully faithful2}, the action of $\Gamma$
descends to $M^\dagger$; it is automatically continuous because $\bA^\dagger_K$ and $\tilde{\bA}^\dagger_K$
carry the same topologies. This proves the desired result.
\end{proof}

\begin{remark}
In \cite{cherbonnier-colmez} and elsewhere, Theorem~\ref{T:overconvergent2} is described as the statement that $p$-adic Galois representations are \emph{overconvergent}. This term refers to the distinction between $\bA_{\Qp}$ and $\bA^\dagger_{\Qp}$. Recall that $\bA_{\Qp}$ consists of formal Laurent series in $\pi$ with coefficients in $\Zp$ such that the coefficient of $\pi^n$ converges $p$-adically to 0 as $n \to -\infty$ (with no restriction as $n \to +\infty$. That is, the negative part of the series converges on the disc $\left| \pi^{-1} \right| \leq 1$.
By contrast, by Corollary~\ref{C:describe dagger rings} such a series belongs to $\bA^{\dagger}_{\Qp}$ if and only if the $p$-adic valuation of the coefficient of $\pi^n$ grows at least linearly in $-n$ as $n \to -\infty$; that is, the negative part of the series converges on a disc of the form $\left| \pi^{-1} \right| \leq 1+ \epsilon$ for some $\epsilon > 0$.
\end{remark}

\end{document}